\documentclass[11pt]{amsart} 
\RequirePackage[utf8]{inputenc}

\usepackage{amsmath,amsthm,amsfonts,amssymb,amscd,amsbsy,dsfont,hyperref}
\usepackage[all]{xy}

\usepackage{palatino}

\newcommand{\dd}{\mathrm{d}}

\newcommand{\Ric}{\operatorname{Ric}}
\newcommand{\cut}{\operatorname{Cut}}
\newcommand{\g}{\mathrm g}
\newcommand{\cvc}{\operatorname{cvc}}
\newcommand{\spn}{\operatorname{span}}
\newcommand{\inj}{\operatorname{inj}}

\newcommand{\cl}{\operatorname{closure}}
\newcommand{\Id}{\operatorname{Id}}
\newcommand{\fc}{\operatorname{FConj}}
\newcommand{\fix}{\operatorname{Fix}}
\newcommand{\conj}{\operatorname{conj}}
\newcommand{\diam}{\operatorname{diam}}

\newcommand{\R}{\mathbb R}

\newcommand{\Sp}{\operatorname{Sp}}

\renewcommand{\geq}{\geqslant}
\renewcommand{\leq}{\leqslant}

\usepackage{geometry}
\geometry{letterpaper,margin=1.4in}

\allowdisplaybreaks

\hypersetup{
    pdftoolbar=true,
    pdfmenubar=true,
   pdffitwindow=false,
    pdfstartview={FitH},
    pdftitle={Positively curved manifolds with large spherical rank}, 
    pdfauthor={Benjamin Schmidt, Krishnan Shankar, and Ralf Spatzier}, 
    pdfsubject={}, 
    pdfkeywords={},
    pdfnewwindow=true,
    colorlinks=true, 
    linkcolor=blue,
    citecolor=blue,
    urlcolor=black,
}

\newtheorem{theorem}{Theorem}[]
\newtheorem{lemma}[theorem]{Lemma}
\newtheorem{proposition}[theorem]{Proposition}
\newtheorem{corollary}[theorem]{Corollary}

\newtheorem{mainthm}{\sc Theorem}
\newtheorem{maincor}[mainthm]{\sc Corollary}

\theoremstyle{definition}
\newtheorem{definition}[theorem]{Definition}

\theoremstyle{remark}
\newtheorem{remark}[theorem]{Remark}
\newtheorem{convention}[theorem]{Convention}
\newtheorem{example}[theorem]{Example}

\title{Positively Curved Manifolds with Large Spherical Rank}
\author[B. Schmidt]{Benjamin Schmidt}
\author[K. Shankar]{Krishnan Shankar}
\author[R. Spatzier]{Ralf Spatzier}

\address{\begin{tabular}{lll}
Michigan State University & University of Oklahoma & Univeristy of Michigan  \\
Dept. of Mathematics & Dept. of Mathematics & Dept. of Mathematics \\
619 Red Cedar Road & 601 Elm Avenue & 530 Church Street  \\ 
East Lansing, MI, 48824 & Norman, OK, 73019 & Ann Arbor, MI, 48109\\
{\tt schmidt@math.msu.edu} & {\tt shankar@math.ou.edu} & {\tt spatzier@umich.edu}
\end{tabular}
}

\allowdisplaybreaks
\numberwithin{equation}{section}
\numberwithin{theorem}{section}

\thanks{The first named author is partially supported by the NSF grant DMS--1207655.  The second named author is partially supported by the NSF grant DMS--1104352.  The third named author is partially supported by the NSF grant DMS--1307164.}

\date{\today}

\begin{document}

\newcommand{\spacing}[1]{\renewcommand{\baselinestretch}{#1}\large\normalsize}
\spacing{1.2}

\begin{abstract}
Rigidity results are obtained for Riemannian $d$-manifolds with $\sec \geq 1$ and spherical rank at least $d-2>0$. Conjecturally, all such manifolds are locally isometric to a round sphere or complex projective space with the (symmetric) Fubini--Study metric.  This conjecture is verified in all odd dimensions, for metrics on $d$-spheres when $d \neq 6$, for Riemannian manifolds satisfying the Raki\'c duality principle, and for K\"ahlerian manifolds.  
\end{abstract}

\maketitle

\section{Introduction}
A complete Riemannian $d$-manifold $M$ has \textit{extremal curvature} $\epsilon\in  \{-1,0, 1\}$ if its sectional curvatures satisfy $\sec \leq \epsilon$ or $\sec \geq \epsilon$.  For $M$ with extremal curvature $\epsilon$, the rank of a complete geodesic $\gamma: \mathbb{R} \rightarrow M$ is defined as the maximal number of linearly independent, orthogonal, and parallel vector fields $V(t)$ along $\gamma(t)$ satisfying $\sec(\dot{\gamma},V)(t)\equiv \epsilon$.  The manifold $M$ has (hyperbolic, Euclidean or spherical according as $\epsilon$ is $-1, 0$ or 1) rank at least $k$ if all its complete geodesics have rank at least $k$.


Riemannian manifolds with $\sec \leq \epsilon$ and admitting positive rank are known to be rigid.
Finite volume Riemannian manifolds with bounded nonpositive sectional curvatures and positive Euclidean rank are locally reducible or locally isometric to symmetric spaces of nonpositive curvature \cite{ba, busp}.  Generalizations include \cite{ebhe} and \cite{wa}.  Closed Riemannian manifolds with $\sec \leq -1$ and positive hyperbolic rank are locally isometric to negatively curved symmetric spaces \cite{ha};  this fails in infinite volume \cite{co}.  Finally, closed Riemannian manifolds with $\sec \leq 1$ and positive spherical rank are locally isometric to positively curved, compact, rank one symmetric spaces \cite{shspwi}.

Rank rigidity results are less definitive in the $\sec \geq \epsilon$ curvature settings.  Hyperbolic rank rigidity results for manifolds with $-1\leq \sec \leq 0$ first appeared in \cite{con}.  Finite volume $3$-manifolds with $\sec \geq -1$ and positive hyperbolic rank are real hyperbolic \cite{scwo}.  Complete Riemannian $3$-manifolds with $\sec \geq 0$ and positive Euclidean rank have reducible universal coverings as a special case of \cite{besc}, while the higher dimensional  $\sec \geq 0$ examples in \cite{spst}, \cite{heintze} illustrate that rank rigidity does not hold in complete generality. 

Our present focus is the curvature setting $\sec \geq 1$.  Conjecturally, manifolds with $\sec \geq 1$ and positive spherical rank are locally isometric to positively curved symmetric spaces.  Note that the simply connected, compact, rank one symmetric spaces, normalized to have minimum sectional curvature 1, have spherical rank: $n-1 = \dim(S^n) - 1$ for the spheres; $2n-2 = \dim(\mathbb{CP}^n) - 2$ for complex projective space; $4n-4 = \dim(\mathbb{HP}^n) - 4$ for quaternionic projective space; $8 = \dim(\mathbb{OP}^2) - 8$ for the Cayley projective plane.
Our main theorems concern $d$-manifolds with spherical rank at least $d-2$, spaces that are conjecturally locally isometric to spheres or complex projective spaces. 

\begin{mainthm}\label{thm:A}
An odd dimensional Riemannian $d$-manifold with $d\geq 3$, $\sec \geq 1$, and spherical rank at least $d-2$ has constant sectional curvatures $\sec \equiv 1$.
\end{mainthm}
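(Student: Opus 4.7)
The objective is to show that the Jacobi operator $R_v(w) := R(w,v)v$ on $v^\perp$ equals the identity for every unit $v \in TM$, which gives $\sec \equiv 1$.

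First I would translate the spherical rank hypothesis into a spectral statement on $R_v$. A parallel field $W(t)$ along $\gamma_v$ with $\sec(\dot\gamma_v, W) \equiv 1$ is equivalent, via the Jacobi equation, to a parallel eigenvector field of the Jacobi operator with eigenvalue $1$. Hence spherical rank at least $d-2$ forces the $1$-eigenspace $\mathcal{P}_v \subset v^\perp$ of $R_v$ to satisfy $\dim \mathcal{P}_v \geq d-2$ at every $v$. Since $\sec \geq 1$, every eigenvalue of $R_v$ is at least $1$, so at each $v$ either $R_v = \Id_{v^\perp}$ or $R_v$ has a single simple eigenvalue $\lambda(v)>1$ with $1$-dimensional eigenspace $\mathcal{N}_v$. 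Writing $U = \{v \in T^1M : R_v = \Id\}$ and $V = T^1M\setminus U$, the theorem reduces to showing $V=\emptyset$.

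Next I would note that parallel transport along any geodesic $\gamma_v$ preserves the splitting $v^\perp = \mathcal{P}_v \oplus \mathcal{N}_v$: by the definition of spherical rank, the parallel extension of a spherical direction is spherical, so parallel transport carries $\mathcal{P}_{\dot\gamma(0)}$ into $\mathcal{P}_{\dot\gamma(t)}$, and the universal dimension bound $\dim \mathcal{P}_\bullet \geq d-2$ promotes this to equality (running the argument backward prevents a geodesic starting in $V$ from ever reaching $U$). Consequently $U$ and $V$ are invariant under the geodesic flow, and on $V$ the line field $\mathcal{N}$ is parallel along every geodesic. For $v \in V$, picking $N(t)$ a unit parallel section of $\mathcal{N}_{\dot\gamma_v(t)}$, the Jacobi equation in this direction is $f'' + \lambda(t) f = 0$ with $\lambda(t)>1$; Sturm comparison with $f'' + f = 0$ produces a zero $t^{\ast}\in (0,\pi)$ of the solution with $f(0)=0$, so $\gamma_v(t^{\ast})$ is conjugate to $\gamma_v(0)$. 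Simultaneously the spherical Jacobi fields $\sin(t)W(t)$ for $W\in \mathcal{P}_v$ show that $\gamma_v(\pi)$ is conjugate to $\gamma_v(0)$ with multiplicity at least $d-2$.

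The main obstacle will be the last step: converting this conjugate structure into a contradiction using $d$ odd. One route I would attempt is volumetric: the large multiplicity $d-2$ of the conjugate point at $\pi$ forces near-saturation of the Bishop volume estimate $\mathrm{vol}(M)\leq \mathrm{vol}(S^d(1))$, while the additional conjugate point at $t^{\ast}<\pi$ along every geodesic in $V$ should be incompatible with the sharp Berger--Kazdan-type lower volume bound available once every geodesic is closed of period $2\pi$. A complementary route uses the parallel holonomy-invariant line field $\mathcal{N}$ along closed geodesics: combined with Synge-type rigidity and the fact that in odd dimension no Kahlerian competitor such as $\mathbb{CP}^n$ exists, the parallel line field cannot be sustained globally, forcing $V=\emptyset$. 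Either way we conclude $R_v \equiv \Id$, hence $\sec\equiv 1$. The crux of the argument, and the step most sensitive to odd dimensionality, is precisely this elimination of the one-dimensional ``non-spherical'' distribution $\mathcal{N}$.
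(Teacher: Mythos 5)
There is a genuine gap, and it occurs at the step you yourself flag as the crux. Before that, one structural error: you conflate the pointwise $1$-eigenspace of the Jacobi operator with the subspace produced by the rank hypothesis. The rank hypothesis gives, for each $v$, a subspace $D_v\subseteq \mathcal{P}_v$ of dimension at least $d-2$ spanned by vectors whose parallel translates make curvature one \emph{for all time}; parallel transport preserves $D$, but there is no reason it preserves the full $1$-eigenspace $\mathcal{P}$. Hence your claim that $U$ and $V$ are invariant under the geodesic flow (``running the argument backward prevents a geodesic starting in $V$ from ever reaching $U$'') is unjustified: a geodesic with $\dot\gamma(0)\in V$ could a priori reach a vector where the Jacobi operator is the identity, with only the $(d-2)$-dimensional spherical subspace being transported. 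This particular point is repairable — for a vector of rank exactly $d-2$ and $w\perp D_v$, the curvature $\sec(P_tw,\dot\gamma(t))$ must exceed $1$ at \emph{some} time (else $w\in D_v$), so Rauch still yields a conjugate point strictly before $\pi$, which is the paper's Corollary \ref{rank} — but keeping $E$ and $D$ distinct is exactly what most of the paper's Section 3 is about (Lemmas \ref{same}, \ref{spherjab}, \ref{maxjab}).

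The unrecoverable gap is the endgame. Neither of your two proposed routes is an argument, and neither identifies where odd dimensionality actually enters. The volumetric route needs all geodesics to be closed of a definite period, which you have not established (the paper gets closedness only through the nontrivial fact that $\exp_p$ is a point map on $S(0,\pi)$ at nonisotropic points, Corollary \ref{point} and Lemma \ref{closed1}, resting on the totally geodesic, codimension-one structure of the eigenspace distribution); moreover Bishop and Berger--Kazdan type bounds are blind to the parity of $d$ — the $\mathbb{CP}^{d/2}$-model geometry is perfectly consistent with all such metric comparisons, so no contradiction can emerge from them alone. The appeal to ``Synge-type rigidity and the nonexistence of an odd-dimensional K\"ahler competitor'' is circular: the whole problem is to show the structure cannot exist in odd dimensions. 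The paper's mechanism is different and essentially topological: it first proves (Proposition \ref{constantrank}, via the point-map property, Cartan's theorem in Lemma \ref{cartan2}, and a connectedness argument on $SM$) that if $\sec\not\equiv 1$ then \emph{every} unit vector has rank exactly $d-2$; then $v\mapsto D_v$ is a continuous nonsingular codimension-one distribution on each unit tangent sphere $S_pM\cong S^{d-1}$, and for $d$ odd this is an even-dimensional sphere, which admits no such distribution (its orthogonal complement would be a nowhere-vanishing line field, impossible since $\chi(S^{d-1})\neq 0$). That obstruction — not volume, holonomy, or Synge — is the step where ``$d$ odd'' is used, and it, together with the global rank-constancy result feeding it, is missing from your proposal.
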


\begin{mainthm}\label{thm:B}
Let $M$ be an even dimensional Riemannian $d$-manifold with $d\geq 4$, $\sec \geq 1$, and spherical rank at least $d-2$.  If $M$ does not have constant sectional curvatures i.e., $\sec \not\equiv 1$, then $M$ satisfies:
\begin{enumerate}
\item Every vector $v \in SM$ is contained in a $2$-plane section $\sigma$ with $\sec(\sigma)>1$.
\item The geodesic flow $\phi_t:SM \rightarrow SM$ is periodic with $2\pi$ a period.
\item There exists an almost complex structure $J:TM\rightarrow TM$ if $M$ is simply connected. 
\item If $M$ is simply connected and if $\sec < 9$, then every geodesic in $M$ is simple, closed, and of length $\pi$.  Moreover, $M$ is homotopy equivalent to $\mathbb{C}\mathbb{P}^{d/2}.$ \end{enumerate}
\end{mainthm}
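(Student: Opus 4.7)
Our plan is to attack the four parts in an interlocking fashion, building on a single core observation. For every geodesic $\gamma$ the spherical rank hypothesis furnishes a $(d-2)$-dimensional parallel subbundle $\mathcal{E}(\gamma) \subset \dot\gamma^\perp$ on which $\sec(\dot\gamma,\cdot) \equiv 1$. Since $\sec \geq 1$ attains its minimum in these directions, each parallel $V \in \mathcal{E}(\gamma)$ is a $1$-eigenvector of the Jacobi operator $R(\dot\gamma,\cdot)\dot\gamma$ at every point, so $J(t) = \sin(t)V(t)$ is a Jacobi field vanishing at $t = 0$ and $t = \pi$. In particular $\gamma(\pi)$ is conjugate to $\gamma(0)$ with multiplicity at least $d-2$, and this single computation drives every part.

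For parts (1) and (3), I introduce the closed \emph{exceptional} set $E = \{v \in SM : R(v,\cdot)v = \Id_{v^\perp}\}$, consisting of those $v$ lying in no $2$-plane with $\sec > 1$, and aim to show $E = \emptyset$. At $v \in E$ the function $t \mapsto \sec(\dot\gamma_v(t), W(t))$ attains its minimum value $1$ at $t = 0$ for every parallel $W$ with $W(0) \in v^\perp$; a first- and second-variation analysis, together with the $(d-2)$-dimensional spherical subbundles carried by the nearby geodesics, should force this minimum to persist along all of $\gamma_v$, raising its spherical rank to $d-1$. Propagating this across a neighborhood of the basepoint via the spherical rank structure produces an open set on which $\sec \equiv 1$, and a Codazzi/Bianchi or Gromoll--Walschap-type unique-continuation argument spreads $\sec \equiv 1$ globally, contradicting the hypothesis. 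Once $E = \emptyset$, the $1$-eigenspace of $R(v,\cdot)v$ on $v^\perp$ coincides with $\mathcal{E}(v)$ and its orthogonal complement $\mathcal{E}^\perp(v)$ is a smooth rank-$1$ subbundle with eigenvalue strictly greater than $1$. A scale-invariant unit section of $\mathcal{E}^\perp$, orientable on simply connected $M$, yields (3): a smooth $J : TM \to TM$ with $Jv$ spanning $\mathcal{E}^\perp(v)$ and $J^2 = -\Id$.

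For part (2), consider $f_p : S_pM \to M$, $v \mapsto \exp_p(\pi v)$. The Jacobi fields $\sin(t)V(t)$ show that $(df_p)_v$ has a $(d-2)$-dimensional kernel containing $\mathcal{E}(v)$, so the image of $f_p$ is at most one-dimensional. Using (1), the Jacobi equation in the $\mathcal{E}^\perp$ direction reads $f'' + \lambda(t) f = 0$ with $\lambda(t) > 1$, and a Sturm/Riccati analysis of the shape operator of geodesic spheres of radius $\pi$, combined with Morse index information, forces $\exp_p(\pi v) = p$ for every $v$. Iterating the analysis at $\dot\gamma(\pi)$, and performing a monodromy computation using the parallel structure of $\mathcal{E}$ and $\mathcal{E}^\perp$ to identify $\dot\gamma_v(2\pi)$ with $v$, then gives $\phi_{2\pi} = \Id_{SM}$.

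For part (4), Sturm comparison for $f'' + \lambda(t) f = 0$ with $1 < \lambda(t) < 9$ puts a conjugate point in the $\mathcal{E}^\perp$ direction strictly inside $(\pi/3, \pi)$. Coupling this with the $2\pi$-periodicity from (2) forces the conjugate locus along each geodesic to match that of $\mathbb{CP}^{d/2}$ with the Fubini--Study metric: a single conjugate point at $\pi/2$ in the $J\dot\gamma$ direction and a $(d-2)$-fold conjugate at $\pi$, so geodesics are simple, closed, and of length $\pi$. A Morse-theoretic analysis of the loop space $\Omega M$ then matches the Poincar\'e polynomial of $\mathbb{CP}^{d/2}$ and, combined with classical obstruction theory, delivers the homotopy equivalence $M \simeq \mathbb{CP}^{d/2}$. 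The principal obstacle throughout is the propagation step in (1): extending the minimum-curvature condition from a single exceptional vector to the whole manifold demands tight control of the $1$-eigenspace of $R(\dot\gamma,\cdot)\dot\gamma$ under parallel transport, and converting the parallel $1$-eigenvector reservoir provided by spherical rank into a global rigidity statement is where the bulk of the technical work will lie.
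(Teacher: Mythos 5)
Your opening observation (spherical parallel fields give Jacobi fields $\sin(t)V(t)$, hence $(d-2)$-fold conjugacy at $t=\pi$) is the paper's Lemma \ref{spherjab}, but the load-bearing steps of your plan are missing or would fail. The central one is your treatment of the exceptional set: the claim that an exceptional vector forces an open set with $\sec\equiv 1$ and that a ``Codazzi/Bianchi or unique-continuation'' argument then spreads $\sec\equiv 1$ globally is not a valid mechanism --- Riemannian metrics are not analytic and constancy of curvature on an open set does not propagate; this propagation is exactly the hard content of the theorem. The paper gets it by a completely different route: first a rank/conjugate-locus argument (Corollary \ref{point}, Lemma \ref{cartan2}, Proposition \ref{constantrank}) showing every vector has rank exactly $d-2$ once $\mathcal{O}\neq\emptyset$, and then Proposition \ref{noisotropic}, whose proof hinges on the Hangan--Lutz theorem that a nonsingular codimension-one totally geodesic distribution on an odd sphere is algebraic (given by a skew-symmetric $[A_p]$) and hence \emph{contact}, fed into an adapted-frame curvature identity (\ref{curv1}) evaluated at a first conjugate point of an isotropic point. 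Nothing in your sketch substitutes for this. The same omission sinks your part (3): choosing a unit vector spanning $\mathcal{E}^\perp(v)$ for each $v\in S_pM$ gives only a map $S_pM\to S_pM$; an almost complex structure requires that this assignment be (the restriction of) a \emph{linear} endomorphism of $T_pM$, which is precisely what Hangan--Lutz (the fundamental theorem of projective geometry) supplies, after which the paper takes the polar decomposition of $A_p$ and uses simple connectivity to fix the sign of the determinant-one representative. Without that linearity neither $J$ nor $J^2=-\Id$ is available.

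Parts (2) and (4) have further gaps. Knowing $\ker \dd(\exp_p)_{\pi v}\supset D_{\pi v}$ makes $f_p(v)=\exp_p(\pi v)$ have rank at most one, but rank $\leq 1$ does not make a map constant; the paper's Corollary \ref{point} needs that the kernel contains a codimension-one \emph{totally geodesic} distribution (via $D=E$ at nonisotropic points) together with Lemma \ref{constant}. Moreover your assertion that a Sturm/Riccati and Morse-index analysis forces $\exp_p(\pi v)=p$ is stronger than what is true at that stage: the paper only obtains a point map $F(p)=\exp_p(S(0,\pi))$, proves $F$ is an isometry with $F^2=\Id$, and needs both simple connectivity and $\sec<9$ (Toponogov's diameter rigidity, Klingenberg's $\inj=\conj$ in even dimensions, Rauch) to conclude $F=\Id$; the $2\pi$-periodicity of the flow instead comes from the short matching argument of Lemma \ref{closed1} ($\dot\gamma_v(\pi)=-\dot\gamma_{-v}(\pi)$ via nearby nonisotropic basepoints), which your ``monodromy computation'' does not supply. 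Finally, in (4) you assert that the conjugate point in the $\mathcal{E}^\perp$ direction occurs exactly at $\pi/2$ and that the conjugate locus matches that of $\mathbb{C}\mathbb{P}^{d/2}$; neither follows from $1<\lambda(t)<9$, and the paper's actual argument for the homotopy type is the Bott--Samelson theorem applied to closed geodesics of length $\pi$, with the index $k=1$ isolated by the conjugate-radius bound $\conj(M)>\pi/3$.
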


A Riemannian manifold satisfies the \textit{Raki\'c duality principle} if for each $p \in M$, orthonormal vectors $v,w \in S_pM$, and $c \in \R$,  $v$ lies in the $c$-eigenspace of the Jacobi operator $\mathcal{J}_w$ if and only if $w$ lies in the $c$-eigenspace of the Jacobi operator $\mathcal{J}_v$.  This property arises naturally in the study of Osserman manifolds \cite{nira, ra}. See Section \ref{prelim} for details.

\begin{mainthm}\label{thm:C}
Let $M$ be a Riemannian $d$-manifold with $\sec \geq 1$ and spherical rank at least $d-2$.  If $M$ satisfies the Raki\'c duality principle, then $M$ is locally symmetric. 
\end{mainthm}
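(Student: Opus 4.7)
By Theorem~\ref{thm:A}, odd-dimensional $M$ satisfies $\sec\equiv1$ and is therefore locally symmetric; moreover, $\sec\equiv1$ yields local symmetry in any dimension. So I assume $d$ is even and $\sec\not\equiv1$, which makes Theorem~\ref{thm:B} available. The spherical rank hypothesis forces $\mathcal{J}_v=R(\cdot,v)v$ to have $1$ as an eigenvalue on a subspace $E(v)\subseteq v^\perp$ of dimension at least $d-2$; Theorem~\ref{thm:B}(1) supplies a two-plane through $v$ with $\sec>1$, so the remaining eigenvalue $\lambda(v)$ strictly exceeds~$1$. Hence the spectrum of $\mathcal{J}_v$ is precisely $\{1,\lambda(v)\}$ with multiplicities $(d-2,1)$, and there is a unit eigenvector $\xi(v)\in v^\perp$ for $\lambda(v)$, unique up to sign.

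Rakić duality turns $\xi$ into an involution: because $\xi(v)$ is a $\lambda(v)$-eigenvector of $\mathcal{J}_v$, the vector $v$ is a $\lambda(v)$-eigenvector of $\mathcal{J}_{\xi(v)}$; by uniqueness of the non-$1$ eigendirection, $\lambda(\xi(v))=\lambda(v)$ and $\xi(\xi(v))=\pm v$. The key algebraic step is to promote $\xi$ at each point $p$ into a linear orthogonal almost complex structure $J_p:T_pM\to T_pM$ with $J_p^2=-\Id$, and to show that $\lambda\equiv\lambda_0$ is a constant $>1$. This should follow from the identity
\[
R(w,v)v \;=\; w - \langle w,v\rangle v + (\lambda(v)-1)\,\langle w,\xi(v)\rangle\,\xi(v),
\]
which exhibits $(\lambda(v)-1)\,\xi(v)\otimes\xi(v)^\flat$ as a symmetric rank-one tensor that is a homogeneous quadratic polynomial in $v$. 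Polynomiality, combined with the $v\leftrightarrow\xi(v)$ symmetry forced by Rakić duality, should force $\sqrt{\lambda(v)-1}\,\xi(v)$ to be a linear function of $v$ of constant norm, producing $J_p$ and pinning down $\lambda\equiv\lambda_0$.

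To finish, the spherical rank assumption yields $d-2$ parallel vector fields spanning $E(\dot\gamma)=\ker(\mathcal{J}_{\dot\gamma}-\Id)$ along every geodesic $\gamma$, so $E(\dot\gamma)$, and hence its orthogonal complement $\spn(\xi(\dot\gamma))\subset\dot\gamma^\perp$, is invariant under parallel transport along $\gamma$. Combined with the linearization, this means $J$ commutes with parallel transport along every geodesic, and since every tangent vector is a geodesic initial velocity, $\nabla J=0$ on $M$. So $M$ is Kähler with $\sec\geq 1$ and spherical rank $\geq d-2$, and the Kählerian case of the conjecture (already verified, as noted in the introduction) forces $M$ to be locally isometric to $\mathbb{CP}^{d/2}$ with the Fubini--Study metric, a locally symmetric space. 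The main obstacle I expect is the linearization step: promoting the a priori set-theoretic involution $\xi$ on unit vectors to a genuine linear almost complex structure while managing the sign ambiguity in $\xi$, and bridging pointwise constancy of $\lambda$ to global constancy across $M$, for which the $2\pi$-periodic geodesic flow from Theorem~\ref{thm:B}(2) is the natural transport mechanism.
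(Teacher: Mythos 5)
The crux of your argument---promoting the eigenvector field $\xi$ to a linear map and pinning $\lambda$ down---is precisely where the write-up stops being a proof: ``polynomiality \ldots should force $\sqrt{\lambda(v)-1}\,\xi(v)$ to be a linear function of $v$ of constant norm'' is asserted, not established, and you yourself flag it as the main obstacle. For comparison, the paper does not extract linearity from polynomiality at all: it applies the Hangan--Lutz theorem (Theorem~\ref{Amap}) to the eigenspace distribution $E$ on $S_pM$ (totally geodesic by Lemma~\ref{totgeo}, nonsingular because every vector has rank $d-2$ by Propositions~\ref{constantrank} and~\ref{noisotropic}), and duality then yields $A_p^2=-\Id$ (Lemma~\ref{Asquared}). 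Even with that in hand, the paper does \emph{not} get constancy of $\lambda$ on $S_pM$ by pointwise algebra; it needs the adapted-frame computation together with the contact property (Proposition~\ref{radial}), after which pointwise Einstein plus Schur gives global constancy and Chi's theorem \cite{chi} finishes. Your pointwise idea is in fact completable---the homogenized operator $\hat B_v=R(\cdot,v)v-\|v\|^2\Id+\g(\cdot,v)v$ is a quadratic family of rank-one positive semidefinite operators, a factorization argument (using $\hat B_v v=0$ to exclude the fixed-image case) forces $\hat B_v=\sigma(v)\sigma(v)^\flat$ with $\sigma$ linear and skew, and duality then gives $\sigma^2=-c\,\Id$, hence $\lambda\equiv 1+c$ on $S_pM$---but none of this argument appears in your proposal, and the passage from pointwise to global constancy of $\lambda$ is likewise left open (Schur's theorem closes it at once; your ``$2\pi$-periodic flow transport'' is not carried out).

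The endgame has two further problems. First, the deduction of $\nabla J=0$ fails as written: parallel invariance of $E_{\dot\gamma(t)}$ along $\gamma$ only controls $J$ applied to the velocity $\dot\gamma$ itself (the line spanned by $J\dot\gamma$ is parallel along $\gamma$), and differentiating this gives only $(\nabla_uJ)(u)=0$ for all $u$---the nearly K\"ahler condition, of which $S^6$ is a non-K\"ahler example---not $\nabla J=0$; nothing you have established constrains the parallel transport of $J w$ for $w\neq\dot\gamma(0)$. Second, even granting K\"ahler, ``the K\"ahlerian case of the conjecture'' is Theorem~\ref{thm:D} of this paper, whose four-dimensional case (Theorem~\ref{4}) is proved by reducing to Theorem~\ref{thm:C}; invoking it to prove Theorem~\ref{thm:C} is circular in dimension four. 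The detour is also unnecessary: once $\lambda$ is a global constant, the paper's appeal to \cite{chi} (equivalently, Proposition~\ref{rakic}) yields local symmetry directly, with no need to show $J$ is parallel.
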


\begin{mainthm}\label{thm:D}
A K\"ahlerian manifold with $\sec \geq 1$, real dimension $d \geq 4$, and spherical rank at least $d-2$ is isometric to a symmetric $\mathbb{C}\mathbb{P}^{d/2}$ with holomorphic curvatures equal to $4$.
\end{mainthm}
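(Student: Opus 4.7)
My plan is to apply Theorem \ref{thm:C} after verifying that $M$ satisfies the Raki\'c duality principle. Let $J$ denote the K\"ahler complex structure, which is parallel, so $Jv$ is always an eigenvector of the Jacobi operator $\mathcal{J}_v$ with eigenvalue $K_h(v) := \sec(v,Jv)$, the holomorphic sectional curvature. Since $d$ is even, Theorem \ref{thm:A} is not relevant; I first rule out $\sec \equiv 1$, noting that a K\"ahler manifold of real dimension $\geq 4$ with nonzero constant sectional curvature cannot exist (the K\"ahler identity $R(X,Y,JZ,JW)=R(X,Y,Z,W)$ forces the metric to be flat, incompatible with $\sec\geq 1$). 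Hence Theorem \ref{thm:B} is in force.

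The main step is to determine the full eigenspace decomposition of $\mathcal{J}_v$ for every $v\in SM$. The spherical rank hypothesis produces a $(d-2)$-dimensional subspace $\mathcal{P}_v\subseteq v^\perp$ contained in the $1$-eigenspace $E_v$ of $\mathcal{J}_v$. Theorem \ref{thm:B}(1) provides a $2$-plane through $v$ with $\sec>1$, which rules out $\dim E_v = d-1$; hence $\dim E_v = d-2$ and $E_v=\mathcal{P}_v$. It remains to show $Jv\notin\mathcal{P}_v$ (equivalently $K_h(v)>1$) for every $v$. The sets $A=\{v:Jv\in\mathcal{P}_v\}$ (closed) and $B=\{v:Jv\notin\mathcal{P}_v\}$ (open) partition $SM$. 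To see $A$ is also open, suppose $v_n\in B$ with $v_n\to v_0$. On $B$ one has $E_v=(v,Jv)^\perp$, and continuity of eigenspaces at constant multiplicity $d-2$ gives $E_{v_0}=\lim_n (v_n,Jv_n)^\perp=(v_0,Jv_0)^\perp$, contradicting $Jv_0\in E_{v_0}$. Thus $A$ is clopen, and as $SM$ is connected, either $A=SM$ or $A=\emptyset$. The first option makes $M$ a complex space form of constant holomorphic sectional curvature $1$, with $\sec$ ranging over $[1/4,1]$, contradicting $\sec\geq 1$ and $\sec\not\equiv 1$. So $A=\emptyset$, and for every $v\in SM$ the operator $\mathcal{J}_v$ has the $\mathbb{C}\mathbb{P}^n$-like decomposition $v^\perp=(v,Jv)^\perp\oplus\mathbb{R}Jv$ with eigenvalues $1$ and $K_h(v)>1$.

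Raki\'c duality is then a direct check on orthonormal pairs $v,w$. If $w=\pm Jv$ then $v=\mp Jw$, and both vectors are eigenvectors of the other's Jacobi operator with the common eigenvalue $K_h(v)=K_h(Jv)=K_h(w)$. If $w\in(v,Jv)^\perp$, then $\langle v,Jw\rangle=-\langle Jv,w\rangle=0$ places $v$ in $(w,Jw)^\perp$, so each is in the $1$-eigenspace of the other's operator. For any other orthonormal $w$, $w$ fails to be an eigenvector of $\mathcal{J}_v$ (since $K_h(v)\neq 1$) and the duality condition is vacuous. Theorem \ref{thm:C} then yields that $M$ is locally symmetric. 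Being compact by Myers and K\"ahler with $\sec\geq 1$ and $\sec\not\equiv 1$, the universal cover of $M$ is a compact Hermitian symmetric space whose sectional curvature is bounded below by $1$; the de Rham decomposition and positivity force irreducibility and rank one, hence the universal cover is $\mathbb{C}\mathbb{P}^{d/2}$ with Fubini--Study metric. Normalizing to $\min\sec=1$ gives $K_h=4$, and since every isometry of $\mathbb{C}\mathbb{P}^{d/2}$ has a fixed point there are no nontrivial free quotients, so $M$ itself is the symmetric $\mathbb{C}\mathbb{P}^{d/2}$ with holomorphic curvature $4$.

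The main obstacle is the rigidity step excluding $Jv\in\mathcal{P}_v$; everything downstream is a routine verification, but the clopen argument above depends crucially on the parallelism of both $J$ and the distribution $\mathcal{P}$ and on the continuity of eigenspaces at constant multiplicity, which in turn uses Theorem \ref{thm:B}(1) to maintain the multiplicity $d-2$ throughout $SM$.
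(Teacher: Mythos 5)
Your opening claim---that in a K\"ahler manifold $Jv$ is automatically an eigenvector of the Jacobi operator $\mathcal{J}_v$ with eigenvalue $\sec(v,Jv)$---is false in general and is nowhere justified in your argument. Parallelism of $J$ only yields the curvature identities (\ref{ghost}), hence $J(E_v)=E_{Jv}$; it does not give $R(Jv,v)v\parallel Jv$ (already in a product of two K\"ahler surfaces with different curvatures this fails for generic $v$, so the property is not free of charge and must come from the rank hypothesis). The gap is not peripheral: your clopen argument needs, on the set $B=\{v:\,Jv\notin \mathcal{P}_v\}$, the identity $E_v=(v,Jv)^{\perp}$, which requires $Jv\perp E_v$---exactly the eigenvector property---and not merely $Jv\notin E_v$; without it $Jv$ can sit at an intermediate angle to $E_v$ and the limiting argument says nothing. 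The final Raki\'c duality check likewise presupposes the ``$\mathbb{CP}^{n}$-like'' decomposition $v^{\perp}=(v,Jv)^{\perp}\oplus\R Jv$, i.e.\ the same unproved fact.

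What you have assumed is essentially the heart of the matter. In the paper, Theorem \ref{Amap} produces a skew-symmetric $A_p$ with $E_v=\spn\{v,A_pv\}^{\perp}$, and the real work is to identify $\spn\{v,A_pv\}$ with the holomorphic plane of $v$: Lemmas \ref{k2} and \ref{k3} give $A_pJ_p=\pm J_pA_p$, Proposition \ref{k5} excludes anticommutation using Berger's curvature inequality \cite{ber2,ka}, and Corollaries \ref{k7} and \ref{k13}, together with the single-eigenvalue statements (Lemma \ref{k9} for $d\geq 6$; Proposition \ref{k12} for $d=4$, proved via a de Rham splitting argument), finally show that $Jv$ spans the $\lambda(v)$-eigendirection of $\mathcal{J}_v$ with $\lambda(v)>1$. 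These steps rest on the pointwise computations of Lemmas \ref{c1}--\ref{c3} and \ref{k6}, and they are where the K\"ahler hypothesis actually does work. Your preliminary reductions ($\sec\not\equiv 1$ for K\"ahlerian $M$ with $d\geq 4$, and $\dim E_v=d-2$ with $E_v=D_v$ via Theorem \ref{thm:B}(1)) are correct, and the downstream duality check and appeal to Theorem \ref{thm:C} would be fine, but as written the proposal assumes rather than proves the key rigidity statement.
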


Theorem \ref{thm:A} implies: 

\begin{maincor}\label{cor:E}
A Riemannian $3$-manifold with $\sec \geq 1$ and positive spherical rank has constant sectional curvatures.
\end{maincor}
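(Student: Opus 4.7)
The plan is to invoke Theorem A directly with $d = 3$. In the definition of rank given at the start of the introduction, the rank of a geodesic is a nonnegative integer, and the manifold has positive spherical rank precisely when every complete geodesic has rank at least one. Since $d - 2 = 1$ when $d = 3$, the hypothesis ``positive spherical rank'' for a $3$-manifold is literally the same as ``spherical rank at least $d - 2$'' in the statement of Theorem A.

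Thus I would simply verify the three hypotheses of Theorem A in the three-dimensional case: $d = 3 \geq 3$, $\sec \geq 1$ is given, and spherical rank $\geq 1 = d - 2$ by the reformulation above. Theorem A then yields $\sec \equiv 1$, which is the desired conclusion.

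There is no genuine obstacle here; the corollary is stated to emphasize that in the lowest nontrivial odd dimension the general odd-dimensional result specializes to the classical picture. The only point deserving attention is purely notational: confirming that ``positive spherical rank'' is consistently interpreted as ``spherical rank at least one'' (rather than, say, the presence of a single geodesic of positive rank), which is how the manifold-level rank condition is defined in the paper.
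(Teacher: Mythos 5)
Your proposal is correct and matches the paper exactly: the corollary is stated there as an immediate consequence of Theorem \ref{thm:A}, obtained by noting that for $d=3$ the hypothesis of positive spherical rank coincides with spherical rank at least $d-2=1$. Nothing further is needed.
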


Only the two- and six-dimensional spheres admit almost complex structures \cite{bose}. Hence, item (3) in Theorem \ref{thm:B} implies:

\begin{maincor}\label{cor:F}
A Riemannian sphere $S^d$ with $d \neq 2, 6$, $\sec \geq 1$, and with spherical rank at least $d-2$ has constant sectional curvatures. 
\end{maincor}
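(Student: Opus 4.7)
The plan is to split the argument by the parity of $d$ and reduce each case directly to one of the main theorems. For $d$ odd with $d\geq 3$, the hypothesis $d\neq 2,6$ is automatic, and Theorem \ref{thm:A} applies to $M=S^d$ immediately, forcing $\sec\equiv 1$. So the only substantive case is $d$ even, $d\geq 4$, $d\neq 6$.

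For the even case, I would argue by contradiction. Assume $\sec\not\equiv 1$. Since $S^d$ is simply connected and all the hypotheses of Theorem \ref{thm:B} are satisfied, item (3) of that theorem produces an almost complex structure $J\colon TS^d\to TS^d$. By the classical theorem of Borel and Serre \cite{bose}, the only spheres admitting an almost complex structure are $S^2$ and $S^6$, which contradicts $d\neq 2,6$. Hence the assumption $\sec\not\equiv 1$ is untenable, and $S^d$ has constant sectional curvatures equal to $1$.

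There is essentially no obstacle beyond checking the dimensional bookkeeping: the only places the hypotheses of Theorems \ref{thm:A} and \ref{thm:B} could fail are in low dimensions, but $d\neq 2$ combined with $d\geq 1$ and parity gives $d\geq 3$ in the odd case and $d\geq 4$ in the even case, exactly matching the respective theorems' hypotheses. The conceptual weight of the corollary is entirely carried by item (3) of Theorem \ref{thm:B} (the construction of an almost complex structure out of the spherical rank condition) and by the Borel–Serre obstruction; the corollary itself is a purely formal consequence.
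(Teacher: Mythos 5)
Your proof is correct and follows exactly the paper's route: the odd case is Theorem \ref{thm:A}, and the even case uses item (3) of Theorem \ref{thm:B} together with the Borel--Serre result \cite{bose} that only $S^2$ and $S^6$ admit almost complex structures. No meaningful differences from the paper's argument.
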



It is instructive to compare the $\sec \geq 1$ case considered here with that of the $\sec \leq 1$ case of rank-rigidity resolved in \cite{shspwi}.  In both cases, each unit-speed geodesic $\gamma: \mathbb{R} \rightarrow M$ admits a Jacobi field $J(t)=\sin(t)V(t)$ where $V(t)$ is a normal parallel field along $\gamma$ contributing to its rank .  Hence, for each $p \in M$, the tangent sphere of radius $\pi$ is contained in the singular set for $\exp_p:T_pM \rightarrow M$.  In a symmetric space with $\frac{1}{4}\leq \sec \leq 1$, the first conjugate point along a unit-speed geodesic occurs at time $\pi$, the soonest time allowed by the curvature assumption $\sec \leq 1$.   Consequently, the rank assumption is an assumption about the locus of \textit{first singularities} of exponential maps when $\sec \leq 1$.  In symmetric spaces with $1\leq \sec \leq 4$, the first  and second conjugate points along a unit-speed geodesic occur at times $\pi/2$ and $\pi$, respectively.  Therefore, when rank-rigidity holds in the $\sec \geq 1$ setting, the rank assumption is an assumption about the locus of \textit{second singularities} of exponential maps.  Concerning \textit{first singularities}, a simply-connected Riemannian manifold with $\sec \geq 1$ in which the first conjugate point along each unit-speed geodesic occurs at time $\pi/2$ is globally symmetric \cite{sc}.


An alternative definition for the spherical rank of a geodesic $\gamma$ in a Riemannian manifold with $\sec \geq 1$ is the dimension of the space of \textit{normal Jacobi fields} along $\gamma$ that make curvature one with $\gamma$.  This alternative notion of rank is a priori less restrictive since parallel fields $V(t)$ give rise to Jacobi fields $J(t)$ as described above.  The Berger spheres, suitably rescaled, have positive rank when defined in terms of Jacobi fields \cite{shspwi} but not when defined in terms of parallel fields by Corollary \ref{cor:E}.  Moreover, there is an infinite dimensional family of Riemannian metrics on $S^3$ with $\sec\geq 1$ and positive rank when defined in terms of Jacobi fields \cite{scwo2}.  In particular, there exists examples that are not locally homogeneous.  Each such metric admits a unit length Killing field $X$ with the property that a $2$-plane section $\sigma \subset TM$ with $X\in \sigma$ has $\sec(\sigma)=1$; the restriction of $X$ to a geodesic is a Jacobi field whose normal component contributes to the rank.  There are no known examples with discrete isometry group.


To describe our methods and the organization of the paper, let $\mathcal{I}=\{p \in M \, \vert\, \sec_p \equiv 1\}$ and $\mathcal{O}=M \setminus \mathcal{I}$ denote the subsets of isotropic and nonisotropic  points in $M$, respectively.  The goal is to prove that $M$ is locally isometric to complex projective space when $\mathcal{O}\neq \emptyset.$

We start with a pointwise analysis of curvature one planes.  Given a vector $v \in S_pM$, let $E_v$ denote the span of all vectors $w$ orthogonal to $v$ with $\sec(v,w)=1$ and let $D_v$ denote the subspace of $E_v$ spanned by vectors contributing to the rank of the geodesic $\gamma_v(t)$.  The assignments $v \mapsto E_v$ and $v \mapsto D_v$ define two (possibly singular) distributions on each unit tangent sphere $S_pM$, called the \textit{eigenspace} and \textit{spherical} distributions, respectively (see \ref{use} and \ref{spheredist}).  The spherical rank assumption ensures that $d-2=\dim(S_pM)-1\leq \dim(D_v)$ for each $v \in S_pM$ so that both distributions are of codimension at most one on $S_pM$.

The arrangement of curvature one planes at nonisotropic points $p$ encodes what ought to be a complex structure, a source of rigidity.  More precisely, the eigenspace distribution on $S_pM$ is \textit{totally geodesic} (see Lemma \ref{totgeo}) and of codimension at most one. Subsection \ref{sec:totgeo} builds on earlier work of Hangan and Lutz \cite{halu} where they exploited the fundamental theorem of projective geometry to prove that  codimension one totally geodesic distributions on odd dimensional spheres are \textit{algebraic}: there is a nonsingular projective class $[A]$ of skew-symmetric linear maps of $\mathbb{R}^{n+1}$ with the property that the distribution is orthogonal to the Killing (line) field on $S^n$ generated by $[A]$.  In particular, such distributions are projectively equivalent to the standard contact hyperplane distribution. Note that when $M$ is complex projective space, with complex structure $J:TM \rightarrow TM$,  the codimension one eigenspace distribution on $S_pM$ is orthogonal to the Killing (line) field on $S_pM$ generated by $[J_p]$.


As the spherical distribution $D$ is invariant under parallel transport along geodesics ($D_{\dot{\gamma}_{v}(t)}=P_t(D_v)$), its study leads to more global considerations in Section \ref{sec:3.1}.  The sphere of radius $\pi$ in $T_pM$ is also equipped with a \textit{kernel distribution},  $v \mapsto K_v:=\ker(\dd(\exp_p)_v)$ (see \ref{kern}).  As each $w \in D_v$ is an initial condition for an initially vanishing spherical Jacobi field along $\gamma_v(t)$, parallel translation in $T_pM$ identifies the spherical subspace $D_v$ with a subspace of $K_{\pi v}$ for each $v\in S_pM$ (see Lemma \ref{spherjab}). When $p \in \mathcal{O}$, the eigenvalue and spherical distributions on $S_pM$ coincide (see Lemma \ref{same}).  As a consequence, the kernel distribution contains a totally geodesic subdistribution of codimension at most one on $S(0,\pi)$.  It follows that $\exp_p$ is constant on $S(0,\pi)$ (see Corollary \ref{point}) and that geodesics passing through nonisotropic points $p \in \mathcal{O}$ are all closed (see Lemma \ref{closed1}). Moreover, when $p \in \mathcal{O}$, each vector $v \in S_pM$ has rank exactly $d-2$ (see Lemma \ref{cartan2}), or putting things together, the eigenspace distribution is a \textit{nonsingular} codimension one distribution on $S_pM$.  As even dimensional spheres do not admit such distributions, $M$ must have even dimension, proving Theorem \ref{thm:A}.  More generally, this circle of ideas and a connectivity argument culminate in a proof that every vector in $M$ has rank $d-2$ when the nonisotropic set $\mathcal{O}\neq \emptyset$ (see Proposition \ref{constantrank}).

The remainder of the paper is largely based on curvature calculations in radial coordinates with respected to frames \textit{adapted} to the spherical distributions that are introduced in Section \ref{sec:adapt}.  An argument based on these calculations and the aforementioned fact that the spherical distributions are contact distributions, establishes that if the nonisotropic set $\mathcal{O}\neq \emptyset$ , then $M=\mathcal{O}$ (see Proposition \ref{noisotropic}).  The proof of Theorem \ref{thm:B} follows easily and appears in Section \ref{sec:B}.  The proof of Theorem \ref{thm:C} appears in Section \ref{sec:C}.  There, the Raki\'c duality hypothesis is applied to prove that the family of skew-symmetric endomorphisms $A_p:T_pM \rightarrow T_pM$, $p \in M$, arising from the family of eigenspace distributions on the unit tangent spheres $S_pM$, define an almost complex structure on $M$ (see Lemma \ref{Asquared} ).  This fact, combined with additional curvature calculations in adapted framings, allows us to deduce that $M$ is Einstein, from which the theorem easily follows (see the proof of Proposition \ref{rakic}).

Finally, Sections \ref{sec:D1} and \ref{sec:D2} contain the proofs of Theorem \ref{thm:D} in real dimension at least six and in real dimension four, respectively.  The methods are largely classical, relying on pointwise curvature calculations based on the K\"ahler symmetries of the curvature tensor and on expressions for the curvature tensor when evaluated on an orthonormal $4$-frame due to Berger \cite{ber2, ka}.  Essentially, these calculations yield formulas that relate the eigenvalues of the endomorphisms $A_p:T_pM \rightarrow T_pM$ to the curvatures of eigenplanes in invariant four dimensional subspaces of $T_pM$.  When the real dimension is at least six, there are enough invariant four dimensional subspaces to deduce that $M$ has constant holomorphic curvatures, concluding the proof in that case.  The argument in real dimension four proceeds differently by proving that $M$ satisfies the Raki\'c duality principle.  When this fails, the decomposition of $TM$ into eigenplanes of $A:TM \rightarrow TM$ is shown to arise from a metric splitting of $M$, contradicting the curvature hypothesis $\sec \geq 1$.





\section{Notation and Preliminaries}\label{prelim}

This section contains preliminary results, mostly well-known, that are used in subsequent sections.  Throughout $(M,\g)$  denotes a smooth, connected, and complete $d$-dimensional Riemannian manifold,  $\mathcal{X}(M)$ the $\R$-module of smooth vector fields on $M$, and $\nabla$ the Levi-Civita connection. Let $X,Y,Z,W \in \mathcal{X}(M)$ be vector fields. Christoffel symbols for the connection $\nabla$ are determined by Koszul's formula
\begin{eqnarray}\label{koszul}
\g(\nabla_{X} Y,Z)&=&\tfrac{1}{2}\{X\g(Y,Z)+Y\g(Z,X)-Z\g(X,Y)\}\\
&+&\tfrac{1}{2}\{\g([X,Y],Z)-\g([Y,Z],X)+\g([Z,X],Y)\}.\nonumber
\end{eqnarray}  
The curvature tensor $R:\mathcal{X}(M)^3 \rightarrow \mathcal{X}(M)$ is defined by $R(X,Y)Z=[\nabla_{X}, \nabla_{Y}]Z-\nabla_{[X,Y]}Z$ and has the following symmetries
 \begin{equation}\label{symmetry}
R(X,Y,Z,W)=-R(Y,X,Z,W)=R(Z,W,X,Y)
\end{equation} where $R(X,Y,Z,W)=\g(R(X,Y)Z,W).$  The sectional curvature of a $2$-plane section $\sigma$ spanned by vectors $v$ and $w$ is defined by $\sec(\sigma)=\sec(v,w)=\frac{R(v,w,w,v)}{\|v \wedge w\|^2}$.  An \textit{almost Hermitian structure} on $M$ is an almost complex structure $J:TM \rightarrow TM$ \textit{compatible} with the metric: $\g(X,Y)=\g(JX,JY)$ for all $X, Y \in \mathcal{X}(M)$.  A \textit{Hermitian} structure on $M$ consists of an integrable almost Hermitian structure.  The K\"ahler form is the $2$-form $\omega$ defined by $\omega(X,Y)=\g(JX,Y)$. A \textit{K\"ahler structure} on $M$ consists of a Hermitian structure with closed K\"ahler form, $d\omega=0$, or equivalently, a parallel complex structure, $\nabla J=0$.  If $M$ is K\"ahlerian, then $\nabla_Y JX=J \nabla_Y X$ for all $X,Y\in \mathcal{X}(M)$, yielding the additional curvature identities

 \begin{eqnarray}
&R(X,Y,Z,W)=R(JX, JY, Z, W)=\nonumber \\ 
&R(X, Y, JZ, JW)=R(JX,JY,JZ,JW).\label{ghost}
\end{eqnarray}

These curvature identities are the key properties of a K\"ahlerian manifold used in the proof of Theorem \ref{thm:D}.

\subsection{Jacobi operators and eigenspace distributions.}
Let $SM$ denote the unit sphere bundle of $M$; its fiber above a point $p \in M$ is the unit sphere $S_pM$ in $T_pM$.  For $v \in S_pM$ define the Jacobi operator $\mathcal J_v:v^{\perp} \rightarrow v^{\perp}$ by $\mathcal J_v(w)=R(w,v)v$.  The symmetries (\ref{symmetry}) imply that $\mathcal J_v$ is a well-defined self-adjoint linear map of $v^{\perp}$.  Its eigenvalues encode the sectional curvatures of $2$-plane sections containing the vector $v$.

\begin{lemma}\label{crit}
Let $v,w \in S_pM$ be orthonormal vectors and assume that $\sec_p \geq \epsilon$ for some $\epsilon \in \R$. The following are equivalent:

\begin{enumerate}
 \item $\sec(v,w)=\epsilon$
\item $w$ is an eigenvector of $\mathcal J_v$ with eigenvalue $\epsilon$.  
\item $R(w,v)v=\epsilon w$ 
\end{enumerate}
\end{lemma}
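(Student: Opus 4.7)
The plan is to handle the three implications essentially by inspection, using the curvature symmetries and the extremality hypothesis $\sec_p \geq \epsilon$ in one direction.

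First, the equivalence (2) $\Leftrightarrow$ (3) is immediate from the definition of the Jacobi operator: saying $\mathcal{J}_v(w) = \epsilon w$ is by construction the statement $R(w,v)v = \epsilon w$, and orthogonality of $v$ and $w$ ensures $w \in v^{\perp}$ so this makes sense as an eigenvalue statement.

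Next, for (3) $\Rightarrow$ (1), I would apply the curvature symmetries (\ref{symmetry}) to rewrite $R(v,w,w,v) = R(w,v,v,w) = \g(R(w,v)v, w)$. Since $v,w$ are orthonormal, $\|v\wedge w\|^2 = 1$, so
\[
\sec(v,w) = \g(R(w,v)v, w) = \g(\mathcal{J}_v w, w).
\]
If (3) holds, substituting yields $\sec(v,w) = \g(\epsilon w, w) = \epsilon$.

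Finally, (1) $\Rightarrow$ (2) is where the hypothesis $\sec_p \geq \epsilon$ enters. The Jacobi operator $\mathcal{J}_v$ is self-adjoint on $v^{\perp}$, and the identity above shows that its associated quadratic form on the unit sphere of $v^{\perp}$ is precisely $u \mapsto \sec(v,u)$. The curvature lower bound means this quadratic form is bounded below by $\epsilon$ on that sphere, and (1) asserts it attains the value $\epsilon$ at $w$. Hence $w$ is a minimizer of $\g(\mathcal{J}_v u, u)$ subject to $\g(u,u) = 1$, and by the standard Lagrange multiplier / Rayleigh quotient argument for self-adjoint operators, any such minimizer is an eigenvector whose eigenvalue equals the minimum value, namely $\epsilon$. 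No step here is an obstacle; the content of the lemma is simply that the sectional curvature extremal condition forces the corresponding tangent direction to be an eigendirection of the Jacobi operator.
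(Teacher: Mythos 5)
Your proposal is correct and follows essentially the same route as the paper: the only nontrivial implication is (1) $\Rightarrow$ (2), and both arguments rest on the same linear-algebraic fact that for the self-adjoint operator $\mathcal{J}_v$, whose quadratic form on unit vectors in $v^{\perp}$ equals $u \mapsto \sec(v,u)$ and is bounded below by $\epsilon$, any unit vector attaining the value $\epsilon$ must be an $\epsilon$-eigenvector. The paper makes this explicit by expanding $w$ in an orthonormal eigenbasis of $\mathcal{J}_v$ and observing that the coefficients on eigenvalues exceeding $\epsilon$ must vanish, while you invoke the Rayleigh quotient/Lagrange multiplier characterization of constrained minimizers --- the same fact in variational packaging.
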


\begin{proof}
Only $(1) \implies (2)$ is nontrivial. If $\{e_i\}_{i=1}^{n-1}$ is an orthonormal eigenbasis of $\mathcal J_v$ with corresponding eigenvalues $\lambda_i$, then $\lambda_i \geq \epsilon$ for each index $i$.  Express $w=\sum_{i=1}^{n-1} \alpha_i e_i$ with $\sum_{i=1}^{n-1} \alpha_i^2=1$.  Then $\epsilon=\g(R(w,v)v,w)=\g(\mathcal J_v(w),w)=\sum_{i=1}^{n-1} \alpha_i^2 \lambda_i.$  Conclude that $\alpha_i=0$ for indices $i$ with $\lambda_i>\epsilon$.  Therefore $w$ is an eigenvector of $\mathcal J_v$ with eigenvalue $\epsilon$.
\end{proof}

\begin{remark}
An analogous proof works when $\sec_p \leq \epsilon$.  
\end{remark}

\begin{lemma}\label{maximal}
Let $v, w \in S_pM$ be orthonormal vectors.  If $w^{\perp} \cap v^{\perp}$ consists of eigenvectors of $\mathcal J_v$, then $w$ is an eigenvector of $\mathcal J_v$. Consequently, $R(v,w,w',v)=\g(\mathcal{J}_v(w),w')=0$ for any $w' \in w^{\perp} \cap v^{\perp}$. 
\end{lemma}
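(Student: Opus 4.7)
The approach is to exploit the self-adjointness of the Jacobi operator $\mathcal{J}_v$ on $v^\perp$. Since $\mathcal{J}_v$ is symmetric, understanding $\mathcal{J}_v(w)$ amounts to computing its inner product with a basis of $v^\perp$, namely $w$ together with vectors in $w^\perp \cap v^\perp$. The hypothesis hands us complete information in the second direction.

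Concretely, I would argue as follows. Pick any $u' \in w^\perp \cap v^\perp$. By hypothesis $u'$ is an eigenvector of $\mathcal{J}_v$, say $\mathcal{J}_v(u') = \mu u'$ for some $\mu \in \R$. Using self-adjointness,
\begin{equation*}
\g(\mathcal{J}_v(w), u') = \g(w, \mathcal{J}_v(u')) = \mu\, \g(w, u') = 0,
\end{equation*}
since $u' \in w^\perp$. This shows $\mathcal{J}_v(w)$ is orthogonal to $w^\perp \cap v^\perp$ inside $v^\perp$, and the orthogonal complement of $w^\perp \cap v^\perp$ within $v^\perp$ is precisely $\R w$. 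Hence $\mathcal{J}_v(w) = aw$ for some scalar $a$, so $w$ is itself an eigenvector of $\mathcal{J}_v$.

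For the second assertion, observe that the curvature symmetries (\ref{symmetry}) give $R(v,w,w',v) = R(w',v,v,w) = \g(\mathcal{J}_v(w'), w)$. Since $w' \in w^\perp \cap v^\perp$ is an eigenvector of $\mathcal{J}_v$ by hypothesis, $\mathcal{J}_v(w') = \lambda w'$ for some $\lambda$, and therefore $\g(\mathcal{J}_v(w'), w) = \lambda\, \g(w', w) = 0$. There is no real obstacle here; the argument is purely linear-algebraic, and the only subtle point is remembering that the orthogonal complement of $w^\perp \cap v^\perp$ is taken within $v^\perp$ (not within $T_pM$), so that it is spanned by $w$ alone.
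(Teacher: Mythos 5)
Your proof is correct and is essentially the paper's argument spelled out: the paper simply invokes that the orthogonal complement (within $v^\perp$) of an invariant subspace of the self-adjoint operator $\mathcal{J}_v$ is invariant, and your computation with $\g(\mathcal{J}_v(w),u')=\g(w,\mathcal{J}_v(u'))=0$ is precisely the verification of that fact, with the ``consequently'' clause handled the same way.
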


\begin{proof}
The orthogonal complement to an invariant subspace of a self-adjoint operator is an invariant subspace.
\end{proof}

\subsubsection{Specialization to manifolds with $\cvc(\epsilon)$.}
\begin{definition}
A Riemannian manifold has \textit{constant vector curvature} $\epsilon$, denoted by $\cvc(\epsilon)$, provided that $\epsilon$ is an extremal sectional curvature for $M$ ($\sec \leq \epsilon$ or $\sec \geq \epsilon$) and $\epsilon$ is an eigenvalue of $\mathcal{J}_v$ for each $v \in SM$ \cite{scwo}.
\end{definition}
For each $v \in SM$, let $E_v\subset v^{\perp}$ denote the (nontrivial) $\epsilon$-eigenspace of $\mathcal{J}_v$.

\begin{convention}
For each $v\in S_pM$, parallel translation in $T_pM$ defines an isomorphism between the subspace $v^{\perp}$ of $T_pM$ and the subspace $T_v (S_pM)$ of $T_v (T_pM)$.  This isomorphism is used without mention when contextually unambiguous.
\end{convention}

\begin{convention}
Given a manifold $M$, an assignment $M \ni p \mapsto D_p\subset T_pM$ of tangent subspaces is a \textit{distribution}.  The rank of the  subspaces may vary with $p \in M$ and the assignment is not assumed to have any regularity.  The \textit{codimension} of a distribution $D$ is defined as the greatest codimension of its subspaces.  When a distribution $D$ is known to have constant rank, it is called a \textit{nonsingular distribution}.  
\end{convention}

\begin{definition}\label{use}
The \textit{$\epsilon$-eigenspace distribution} on $S_pM$, denoted by $E$, is the distribution of tangent subspaces $$S_pM \ni v \mapsto E_v \subset T_v(S_pM).$$  Its \textit{regular set}, denoted by $\mathcal{E}_p$, is the open subset of $S_pM$ consisting of unit vectors $v$ for which $\dim(E_v)$ is minimal.
\end{definition}

\begin{example}
$\epsilon$-eigenspace distributions need not have constant rank.  When $M$ is a Berger sphere suitably rescaled to have $\cvc(1)$, the curvature one $2$-planes in $S_pM$ are precisely the $2$-planes containing the Hopf vector $h\in S_pM$.  Therefore $\dim(E_h)=\dim(E_{-h})=2$, while $\dim(E_v)=1$ for any vector $v \in S_pM\setminus \{\pm h\}$. 
\end{example}

\begin{lemma}\label{chi}
For each $p \in M$ the restriction of the $\epsilon$-eigenspace distribution on $S_pM$ to $\mathcal{E}_p$ is smooth.  
\end{lemma}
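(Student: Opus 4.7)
The plan is to present $E_v$ as the image of a Riesz spectral projector associated with $\mathcal{J}_v$ and a contour surrounding $\epsilon$, and then to exploit standard smooth perturbation theory for self-adjoint operators.

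First I would set up the family of Jacobi operators globally. The assignment $T_pM \ni v \mapsto \tilde{\mathcal{J}}_v \in \operatorname{End}(T_pM)$ given by $\tilde{\mathcal{J}}_v(w) = R(w,v)v$ is polynomial (hence smooth) in $v$, self-adjoint, and satisfies $\tilde{\mathcal{J}}_v(v)=0$. For $v \in S_pM$ its restriction to $v^{\perp}$ is the Jacobi operator $\mathcal{J}_v$, so the $\epsilon$-eigenspace $E_v$ of $\mathcal{J}_v$ coincides with the intersection of the $\epsilon$-eigenspace of $\tilde{\mathcal{J}}_v$ with $v^{\perp}$. (If $\epsilon \neq 0$ the two eigenspaces are equal; if $\epsilon = 0$ the larger one is $E_v \oplus \operatorname{span}(v)$, and cutting down by $v^{\perp}$ is a smooth operation in $v$.)

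Next I would verify that $\epsilon$ is locally isolated in the spectrum of $\tilde{\mathcal{J}}_v$ on $\mathcal{E}_p$. Fix $v_0 \in \mathcal{E}_p$ and let $k = \dim E_{v_0}$, which is the minimal value of $\dim E_v$ over $S_pM$. Since $M$ has $\cvc(\epsilon)$ and $\epsilon$ is an extremal sectional curvature, say $\sec \geq \epsilon$, all eigenvalues of $\mathcal{J}_v$ lie in $[\epsilon, \infty)$ and $\epsilon$ is an eigenvalue of multiplicity $\dim E_v$. Order the eigenvalues $\lambda_1(v) \leq \cdots \leq \lambda_{d-1}(v)$; each $\lambda_i$ is continuous in $v$. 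At $v_0$ we have $\lambda_1(v_0) = \cdots = \lambda_k(v_0) = \epsilon < \lambda_{k+1}(v_0)$. Choose $\delta>0$ with $\lambda_{k+1}(v_0) > \epsilon + 2\delta$. By continuity and the openness of $\mathcal{E}_p$, there is a neighborhood $U \subset \mathcal{E}_p$ of $v_0$ on which $\dim E_v = k$ (so $\lambda_1(v)=\cdots=\lambda_k(v)=\epsilon$) and $\lambda_{k+1}(v) > \epsilon + \delta$. Thus on $U$ the eigenvalue $\epsilon$ is isolated from the remaining spectrum by a uniform gap. The case $\sec \leq \epsilon$ is symmetric.

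Finally I would conclude smoothness by spectral projection. Let $\Gamma \subset \mathbb{C}$ be a small circle centered at $\epsilon$, of radius less than $\delta/2$ (and, if $\epsilon = 0$, avoiding $0$ is unnecessary since $0$ lies inside $\Gamma$; in that case project afterwards onto $v^{\perp}$ as explained above). For $v \in U$ the Riesz projector
\[
P_v \;=\; \frac{1}{2\pi i}\oint_{\Gamma}\bigl(\lambda \Id - \tilde{\mathcal{J}}_v\bigr)^{-1}\, d\lambda
\]
is well defined because $\Gamma$ encloses no other eigenvalue of $\tilde{\mathcal{J}}_v$. Since the resolvent $(\lambda \Id - \tilde{\mathcal{J}}_v)^{-1}$ is smooth in $(\lambda,v)$ on $\Gamma \times U$, the integral $P_v$ is smooth in $v$. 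Its image is the $\epsilon$-eigenspace of $\tilde{\mathcal{J}}_v$, from which $E_v$ is obtained by a smooth intersection/projection with $v^{\perp}$. Hence $v \mapsto E_v$ is smooth on $U$, and since $v_0 \in \mathcal{E}_p$ was arbitrary, the restriction of the $\epsilon$-eigenspace distribution to $\mathcal{E}_p$ is smooth.

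The only real obstacle is establishing the uniform spectral gap around $\epsilon$; once the extremal character of $\epsilon$ and the minimality defining $\mathcal{E}_p$ are combined with continuity of eigenvalues, smoothness of the projector is immediate from standard perturbation theory.
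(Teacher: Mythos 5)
Your argument is correct, but it takes a genuinely different route from the paper. The paper's proof is a one-liner: on $\mathcal{E}_p$ the smooth family $\mathcal{J}_v-\epsilon\Id$ has constant rank, and kernels of a constant-rank smooth family of linear maps vary smoothly (citing Chi's Lemma 1); self-adjointness, the curvature bound, and the spectral structure play no role there. You instead extend $\mathcal{J}_v$ to the polynomial family $\tilde{\mathcal{J}}_v$ on $T_pM$, use $\sec\geq\epsilon$ (or $\leq$) together with minimality of $\dim E_v$ on $\mathcal{E}_p$ to isolate $\epsilon$ by a uniform spectral gap, and recover $E_v$ as the image of a Riesz projector, which is smooth by holomorphic functional calculus. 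Both arguments hinge on the same essential input, namely that $\dim E_v$ is locally constant on $\mathcal{E}_p$; yours buys a very explicit smooth projector $P_v$ (hence explicit smooth local spanning sections $P_v e_i$), at the cost of invoking self-adjointness and perturbation theory, while the paper's constant-rank statement is more elementary and applies verbatim to non-symmetric families. Two small housekeeping points in your write-up: when $\epsilon\neq 0$ you should also take the radius of $\Gamma$ smaller than $|\epsilon|$ (or simply always compose with orthogonal projection onto $v^{\perp}$, as you do for $\epsilon=0$), since the extended operator $\tilde{\mathcal{J}}_v$ has the extra eigenvalue $0$ on $\operatorname{span}(v)$ and your radius $\delta/2$ is not a priori smaller than $|\epsilon|$; and note that on $\mathcal{E}_p$ exactly $k$ eigenvalues equal $\epsilon$, which is what guarantees the projector's image is $E_v$ (plus possibly $\operatorname{span}(v)$) and nothing more. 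Neither affects the validity of the proof.
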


\begin{proof}
The operators $\mathcal{J}_v-\epsilon \Id$ vary smoothly and have constant rank in $\mathcal{E}_p$.  Therefore the subspaces $E_v=\ker(\mathcal{J}_v-\epsilon \Id)$ vary smoothly with $v \in \mathcal{E}_p$ (see \cite[Lemma 1]{chi} for more details). 
\end{proof}

\begin{remark}\label{smoothremark}  
Let $\mathcal{E} \subset SM$ denote the collection of unit tangent vectors $v\in SM$ with $\dim(E_v)$ minimal.  The same proof as that of Lemma \ref{chi} shows that the assignment $v \mapsto E_v$ is smooth on $\mathcal{E}$.  Note that $\mathcal{E} \cap S_pM$ may not coincide with $\mathcal{E}_p$.
\end{remark}

A tangent distribution $D$ on a complete Riemannian manifold $S$ is \textit{totally geodesic} if complete geodesics of $S$ that are somewhere tangent to $D$ are everywhere tangent to $D$.

\begin{convention} Henceforth, unit tangent spheres $S_pM$ are equipped with the standard Riemannian metric, denoted by $\langle \cdot,\cdot\rangle$, induced from the Euclidean metric $\g_p(\cdot,\cdot)$ on $T_pM$.  Moreover, geodesics in $S_pM$ are typically denoted by $c$ while geodesics in $M$ are typically denoted by $\gamma$.
\end{convention}

\begin{lemma}\label{totgeo}
For each $p \in M$, the $\epsilon$-eigenspace distribution $E$ is a totally geodesic distribution on $S_pM$. 
\end{lemma}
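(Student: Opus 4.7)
The plan is to use the elementary fact that a geodesic in the round sphere $S_pM$ is a great circle, so its tangent vector and position vector at any time span the same two-plane in $T_pM$. Combined with Lemma \ref{crit}, this immediately gives that the tangency condition $\dot{c}(t) \in E_{c(t)}$ propagates along the geodesic.

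More precisely, let $c:\R \to S_pM$ be a unit-speed geodesic somewhere tangent to $E$, say $c(0) = v$ and $\dot{c}(0) = w \in E_v$ with $\|w\| = 1$. Under the identification $E_v \subset v^\perp \subset T_pM$ with $E_v \subset T_v(S_pM)$ given by parallel transport, the vector $w$ lies in $v^\perp$, and the great circle through $v$ with initial velocity $w$ is
\begin{equation*}
c(t) = \cos(t)\, v + \sin(t)\, w, \qquad \dot{c}(t) = -\sin(t)\, v + \cos(t)\, w.
\end{equation*}
A direct computation shows that $\spn\{c(t), \dot{c}(t)\} = \spn\{v,w\}$ for every $t \in \R$, and that $c(t) \perp \dot{c}(t)$ with both unit length. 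Hence $\sec(c(t), \dot{c}(t)) = \sec(v,w)$.

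Since $w \in E_v$, the vector $w$ is a unit $\epsilon$-eigenvector of $\mathcal{J}_v$, so $\sec(v,w) = \epsilon$. Thus $\sec(c(t), \dot{c}(t)) = \epsilon$ for all $t$. As $M$ has extremal curvature $\epsilon$, Lemma \ref{crit} applied to the orthonormal pair $c(t), \dot{c}(t) \in S_pM$ yields that $\dot{c}(t)$ is an $\epsilon$-eigenvector of $\mathcal{J}_{c(t)}$, i.e.\ $\dot{c}(t) \in E_{c(t)}$. This holds for every $t$, so $c$ is everywhere tangent to $E$, proving that $E$ is totally geodesic.

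There is essentially no obstacle here beyond noticing that great circles preserve the two-plane they sweep out; the key input is Lemma \ref{crit}, which supplies the equivalence between lying in the $\epsilon$-eigenspace and realizing the extremal sectional curvature. No smoothness or constant-rank hypothesis on $E$ is needed, which is convenient since the example of the Berger sphere shows $E$ can genuinely have jump discontinuities in its rank.
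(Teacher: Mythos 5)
Your proof is correct and is essentially the paper's argument: both consider the great circle $c(t)=\cos(t)v+\sin(t)w$ in $S_pM$ and invoke Lemma \ref{crit} to conclude $\dot{c}(t)\in E_{c(t)}$ for all $t$. The only cosmetic difference is that the paper expands $\mathcal{J}_{c(t)}(\dot{c}(t))=-\sin(t)\mathcal{J}_w(v)+\cos(t)\mathcal{J}_v(w)$ and uses $\mathcal{J}_v(w)=\epsilon w$, $\mathcal{J}_w(v)=\epsilon v$, whereas you observe $\spn\{c(t),\dot{c}(t)\}=\spn\{v,w\}$, so $\sec(c(t),\dot{c}(t))=\epsilon$ and Lemma \ref{crit} applies pointwise along the circle.
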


\begin{proof}
Let $v \in S_pM$ and $w \in E_v$.  The geodesic $c(t)=\cos(t) v+\sin(t) w$ satisfies $c(0)=v$ and $\dot{c}(0)=w$.  Calculate $\mathcal J_{c(t)}(\dot{c}(t))=-\sin(t)\mathcal J_w(v)+\cos(t)\mathcal J_v(w)$.  By assumption, $\mathcal J_v(w)=\epsilon w$. By Lemma \ref{crit}, $\mathcal J_w(v)=\epsilon v$.  Therefore $\mathcal J_{c(t)}(\dot{c}(t))=\epsilon(-\sin(t)v+\cos(t)w)=\epsilon \dot{c}(t)$.  Hence $\dot{c}(t) \in E_{c(t)},$ concluding the proof.
\end{proof}

\subsection{Conjugate points and Jacobi fields.}\label{conjsub} Let $M$ denote a smooth, connected, and complete Riemannian manifold.  

\begin{convention} Henceforth, geodesics are parameterized by arclength.  Moreover, the notation $\gamma_v(t)$ is frequently used to denote a complete unit speed geodesic with initial velocity $v=\dot{\gamma}_v(0) \in S_{\gamma(0)}M$.
\end{convention}

Let $\exp_p:T_pM \rightarrow M$ denote the exponential map and $\mathbf{r}:T_pM \setminus \{0\} \rightarrow S_pM$ the radial retraction $\mathbf{r}(v)=\frac{v}{\|v\|}$.  Critical points of $\exp_p$ are  \textit{conjugate vectors}. For a conjugate vector $v \in T_pM$, let  
\begin{equation}\label{kern}
K_v=\ker(\dd(\exp_p)_v) \subset T_v (T_pM).
\end{equation}
The \textit{multiplicity} of $v$ is defined as $\dim(K_v)$.  For $t>0$, let $S(0, t)$ denote the sphere in $T_pM$ with center $0$ and radius $t$.  Gauss' Lemma asserts $K_v \subset T_v (S(0,\|v\|)).$

Let $v \in T_pM$ be a conjugate vector and $\gamma(t)=\exp_p(t\mathbf{r}(v))$. The point $q=\exp_p(v)$ is \textit{conjugate} to the point $p$ along $\gamma$ at time $t=\|v\|$.  The point $q=\exp_p(v)$ is a \textit{first conjugate point} to $p$ along $\gamma$ if $v$ is a \textit{first conjugate vector}, i.e. $tv$ is not a conjugate vector for any $t \in (0,1)$.  Denote the locus of first conjugate vectors in $T_pM$ by $\fc(p)$.  The conjugate radius at $p$, denoted $\conj(p)$, is defined by $\conj(p)=\inf_{v \in \fc(p)} \{\|v\|\}$ when $\fc(p)\neq \emptyset$ and $\conj(p)= \infty$ otherwise; when $\fc(p)\neq \emptyset$, the infimum is realized as a consequence of Lemma \ref{closed} below.  The conjugate radius of $M$, denoted $\conj(M)$, is defined by $\conj(M)=\inf_{p \in M} \{\conj(p)\}$.

Equivalently, conjugate vectors and points are described in terms of Jacobi fields along $\gamma$.  A normal Jacobi field along $\gamma(t)$ is a vector field $J(t)$, perpendicular to $\dot{\gamma}(t)$ and satisfying Jacobi's second order ode: $J''+R(J,\dot{\gamma})\dot{\gamma}=0.$  Initial conditions $J(t),J'(t) \in \dot{\gamma}(t)^{\perp}$ uniquely determine a normal Jacobi field. Let $p=\gamma(0)$, $v=\dot{\gamma}(0) \in S_pM$, and $w \in v^{\perp}$.  The geodesic variation $\alpha(s,t)=\exp_p(t(v+sw))$ of $\gamma(t)=\alpha(0,t)$ has variational field $J(t)=\frac{\partial}{\partial s} \alpha(s,t)\vert_{s=0}$, a normal Jacobi field along $\gamma$ with initial conditions $J(0)=0$ and $J'(0)=w$ given by
\begin{equation}\label{jabfield}
J(t)=\dd(\exp_p)_{tv}(tw).
\end{equation}
If $J(a)=0$, then (\ref{jabfield}) implies that $aw \in K_{av}$.  In this case $av$ is a conjugate vector and $\gamma(a)$ is a conjugate point to $p=\gamma(0)$ along $\gamma$.  All initially vanishing normal Jacobi fields along $\gamma$ arise in this fashion, furnishing the characterization: $\gamma(a)$ is conjugate to $\gamma(0)$ along $\gamma$  if and only if there exists a nonzero normal Jacobi field $J(t)$ along $\gamma$ with $J(0)=J(a)=0$. 

For $\gamma(t)$ a geodesic and $t_0>0$, let $\mathcal{V}_{\gamma}^{t_0}$ denote the vector space of piecewise differentiable normal vector fields $X(t)$ along $\gamma(t)$ with $X(0)=X(t_0)=0$.  The index form $I_{\gamma}^{t_0}:\mathcal{V}_{\gamma}^{t_0} \times \mathcal{V}_{\gamma}^{t_0} \rightarrow \R$ is the bilinear symmetric map defined by 
\begin{equation}\nonumber
I_{\gamma}^{t_0}(X,Y)=\int_{0}^{t_0}  \g(X',Y')-\g(R(X,\dot{\gamma})\dot{\gamma}, Y) \, dt.
\end{equation}

The null space of $I_{\gamma}^{t_0}$ consists of normal Jacobi fields $J(t)$ along $\gamma(t)$ with $J(0)=J(t_0)=0$.  By the Morse Index Theorem \cite[Chapter 11]{doca}, there exists $X \in \mathcal{V}_{\gamma}^{t_0}$ such that $I_{\gamma}^{t_0}(X,X)<0$ if and only if there exists $0<s<t_0$ such that $\gamma(s)$ is conjugate to $\gamma(0)$ at time $s$.  In particular, the property of being a first conjugate point along a geodesic segment is a symmetric property.

\begin{lemma}\label{closed}
$\fc(p)$ is a closed subset of $T_pM$.
\end{lemma}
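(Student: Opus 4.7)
The plan is to show that the complement $T_pM \setminus \fc(p)$ is open. A vector $v$ lies in the complement precisely when either (i) $v$ is not a conjugate vector at all, or (ii) $v$ is a conjugate vector but fails the first-conjugate condition, i.e.\ some $tv$ with $t \in (0,1)$ is already conjugate.

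In case (i), the differential $\dd(\exp_p)_v$ is nonsingular. Since $\exp_p$ is smooth, nonsingularity of the differential is an open condition on $T_pM$, so a whole neighborhood of $v$ consists of non-conjugate vectors and hence lies in $T_pM \setminus \fc(p)$.

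In case (ii), I would invoke the Morse Index Theorem. Set $u = v/\|v\|$, $T = \|v\|$, and $\gamma(t) = \exp_p(tu)$, so that by hypothesis some interior point $\gamma(s_0)$, $s_0 \in (0,T)$, is conjugate to $\gamma(0)$. The Morse Index Theorem produces a field $X \in \mathcal{V}_{\gamma}^{T}$ with $I_{\gamma}^{T}(X,X) < 0$. For $v'$ close to $v$, let $u' = v'/\|v'\|$, $T' = \|v'\|$, $\gamma'(t) = \exp_p(tu')$, and construct a comparison field $X' \in \mathcal{V}_{\gamma'}^{T'}$ by parallel translating $X$ across the smooth one-parameter family of unit-speed geodesics interpolating $\gamma$ and $\gamma'$, then reparametrizing by $T'/T$ so that $X'(0) = X'(T') = 0$. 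The curvature term $\g(R(X',\dot{\gamma}')\dot{\gamma}',X')$ and the derivative $(X')'$ entering the definition of $I_{\gamma'}^{T'}(X',X')$ depend continuously on $v'$, so $I_{\gamma'}^{T'}(X',X') < 0$ for all $v'$ in a neighborhood of $v$. A second application of the Morse Index Theorem then yields an interior time $s' \in (0,T')$ at which $\gamma'(s')$ is conjugate to $\gamma'(0)$, and so $v' \notin \fc(p)$.

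Combining the two cases exhibits an open neighborhood of every point of the complement, proving that $\fc(p)$ is closed. The main technical step is the continuous dependence of the index form on the initial vector $v$: the construction of the comparison fields $X'$ along varying geodesics requires care but is routine; everything else is a formal consequence of the smoothness of $\exp_p$ and the Morse Index Theorem.
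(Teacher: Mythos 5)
Your proposal is correct and takes essentially the same route as the paper: the heart of both arguments is transporting a field with negative index form to nearby geodesics via parallel framings (with a reparametrization to account for varying length), using continuity of the index form, and invoking the Morse Index Theorem to produce an earlier conjugate point, so nearby vectors cannot lie in $\fc(p)$. The only cosmetic difference is that you phrase the argument as openness of the complement and handle non-conjugate limit vectors via openness of nonsingularity of $\dd(\exp_p)$, whereas the paper argues sequentially and obtains conjugacy of the limit vector from a limit of Jacobi fields.
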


\begin{proof}
Assume that $v_i \in \fc(p)$ converge to $v \in T_pM$.  Let $t_i=\|v_i\|$, $\bar{t}=\|v\|$, and $w_i=\mathbf{r}(v_i), w=\mathbf{r}(v) \in S_pM$.  As $v_i$ is a conjugate vector, there exists a normal Jacobi field $J_i(t)$ along $\gamma_{w_i}(t)$ with $J_i(0)=J_i(t_i)=0$ and $\|J_i'(0)\|=1$.  A subsequence of the Jacobi fields $J_i(t)$  converges to a nonzero Jacobi field $J(t)$ along $\gamma_w(t)$ with $J(0)=J(\bar{t})=0$.  Therefore $v$ is a conjugate vector.  If $v \notin \fc(p)$, there exists $0<s<1$ such that $sv$ is a conjugate vector. Therefore there exists $X \in \mathcal{V}_{\gamma_w}^{\bar{t}}$ with $I_{\gamma_w}^{\bar{t}}(X,X)<0$.

An orthonormal framing $\{e_1,\ldots, e_{n-1}\}$ of a neighborhood $B$ of $w$ in $S_pM$ induces parallel orthonormal framings $\{E_1(t),\ldots,E_{n-1}(t)\}$ along geodesics with initial tangent vectors in $B$, yielding isomorphisms between $\mathcal{V}_{\gamma_{b}}^{t} \cong \mathcal{V}_{\gamma_w}^{t}$ for each $b \in B$ and $t>0$.  Under these isomorphisms, $I_{\gamma_{w_i}}^{t_i}(X,X) \rightarrow I_{\gamma_w}^{\bar{t}}(X,X)$ by continuity; therefore, $I_{\gamma_{w_i}}^{t_i}(X,X)<0$ for all $i$ sufficiently large, contradicting $v_i \in \fc(p)$. 
\end{proof}

\subsection{Codimension one totally geodesic distributions on spheres.}\label{sec:totgeo}

Given a non-zero skew-symmetric linear map $A:\mathbb{R}^{d}\rightarrow \mathbb{R}^{d}$ and  $v \in S^{d-1}$, parallel translation in $\mathbb{R}^{d}$ identifies $v^{\perp}$ and $T_vS^{d-1}$.   As $A$ is skew-symmetric and non-zero, the assignment $S^{d-1} \ni v \mapsto Av \in T_vS^{d-1}$ defines a Killing field on $S^{d-1}$.  Let $E_v=\spn\{v,Av\}^{\perp}$ denote the subspace of $T_vS^{d-1}$ orthogonal to $Av$.  Then $S^{d-1} \ni v \mapsto E_v \subset T_vS^{d-1}$ defines a codimension one totally geodesic distribution on $S^{d-1}$ with singular set $\mathcal{X}:=\{x \in S^{d-1}\, \vert \, E_x=T_xS^{d-1}\}=\ker(A) \cap S^{d-1}$ as a consequence of the following well-known lemma.

\begin{lemma}\label{killing}
Let $X$ be a Killing field on a complete Riemannian manifold $(S,\g)$.  If a geodesic $c(t)$ satisfies $\g(\dot{c},X)(0)=0$, then $\g(\dot{c},X)(t)\equiv 0$.
\end{lemma}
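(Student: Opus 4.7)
The plan is to show that the function $t \mapsto \g(\dot c(t), X(c(t)))$ is constant along the geodesic $c$; the vanishing condition at $t=0$ then immediately forces it to vanish identically.

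To see constancy, I would differentiate this function along $c$ using the Levi-Civita connection. Since $c$ is a geodesic, $\nabla_{\dot c}\dot c = 0$, so
\begin{equation*}
\frac{d}{dt}\g(\dot c, X) = \g(\nabla_{\dot c}\dot c, X) + \g(\dot c, \nabla_{\dot c}X) = \g(\dot c, \nabla_{\dot c}X).
\end{equation*}
It therefore suffices to show the remaining term vanishes, which will come directly from the Killing property of $X$.

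The characterization of a Killing vector field $X$ that I will use is that the $(0,2)$-tensor $(Y,Z)\mapsto \g(\nabla_Y X, Z)$ is skew-symmetric, i.e.\ $\g(\nabla_Y X, Z) + \g(\nabla_Z X, Y) = 0$ for all $Y, Z \in \mathcal{X}(S)$. Specializing to $Y = Z = \dot c$ yields $2\g(\nabla_{\dot c}X,\dot c) = 0$, so the derivative above is zero and $\g(\dot c, X)\circ c$ is constant. Combined with the hypothesis that this constant is zero at $t=0$, the conclusion follows.

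There is essentially no obstacle here: the argument is a one-line consequence of the skew-symmetry of $\nabla X$ together with the geodesic equation, and completeness of $(S,\g)$ is used only to ensure that $c(t)$ is defined for all $t \in \R$ so that the identity $\g(\dot c, X)(t) \equiv 0$ makes sense globally.
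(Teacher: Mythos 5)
Your proof is correct and is exactly the standard argument: the paper states this lemma without proof as a well-known fact, and the expected justification is precisely yours, namely that $\tfrac{d}{dt}\g(\dot c,X)=\g(\dot c,\nabla_{\dot c}X)=0$ by the geodesic equation together with the skew-symmetry of $Y\mapsto\nabla_Y X$, so $\g(\dot c,X)$ is constant along $c$ and vanishes identically once it vanishes at $t=0$.
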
  

The skew-symmetric linear map $A$ and each nonzero real multiple $rA$ yield the same  codimension one totally geodesic distribution $E$ on $S^n$.  In \cite{halu}, Hangan and Lutz apply the fundamental theorem of projective geometry to establish the following:


\begin{theorem}[Hangan and Lutz]\label{Amap}
Let $E$ be a nonsingular codimension one totally geodesic distribution on a unit sphere $S^{d-1}\subset \mathbb{R}^{d}$.  Then $d-1$ is odd and there exists a nonsingular projective class $[A]\in PGL(\mathbb{R}^{d})$ of  skew-symmetric linear maps such that for each $x \in S^{d-1}$,  $E_x=\spn\{x,Ax\}^{\perp}$.  
\end{theorem}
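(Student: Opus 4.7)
The plan is to encode $E$ as a projective duality and invoke the fundamental theorem of projective geometry. For each $x \in S^{d-1}$, identify $T_xS^{d-1}$ with $x^{\perp} \subset \R^{d}$ via the ambient inclusion and set
\[
H_x := \R x \oplus E_x \subset \R^{d},
\]
a linear hyperplane of $\R^{d}$. Applying Lemma \ref{totgeo} to the great circle $c(t) = \cos(t)x + \sin(t)v$ for $v \in E_x$ at time $t = \pi$ yields $-v \in E_{-x}$, hence $E_{-x} = E_x$ inside $x^{\perp}$, so $H_{-x} = H_x$ and the construction descends to a well-defined map
\[
\phi \colon \mathbb{RP}^{d-1} \longrightarrow (\mathbb{RP}^{d-1})^{*}, \qquad [x] \longmapsto [H_x].
\]

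Momentarily granting that $\phi$ is a duality collineation --- i.e.\ that $\phi$ sends projective lines of $\mathbb{RP}^{d-1}$ to pencils of hyperplanes --- the fundamental theorem of projective geometry produces a linear isomorphism $\widetilde{A}\colon \R^{d} \to (\R^{d})^{*}$ inducing $\phi$. Using the Euclidean inner product to identify $(\R^{d})^{*}$ with $\R^{d}$ gives a nonsingular $A\colon \R^{d} \to \R^{d}$ with $H_x = (Ax)^{\perp}$, and consequently
\[
E_x \;=\; H_x \cap x^{\perp} \;=\; \spn\{x, Ax\}^{\perp}.
\]
The reflexivity $x \in H_x$ forces $\langle Ax, x\rangle = 0$ for every $x \in \R^{d}$, so by polarization $A$ is skew-symmetric. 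A nondegenerate skew operator on $\R^{d}$ exists only when $d$ is even, so $d-1$ must be odd; and since rescaling $A$ preserves $E$, the data organizes naturally into a class $[A] \in PGL(\R^{d})$.

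The main obstacle is verifying that $\phi$ is a collineation. A projective line in $\mathbb{RP}^{d-1}$ corresponds to a $2$-plane $P \subset \R^{d}$ via the great circle $C = S^{d-1} \cap P$, and one needs the hyperplanes $\{H_x : x \in C\}$ to share a common $(d-2)$-dimensional subspace of $\R^{d}$. When $C$ is itself tangent to $E$ at one (hence every, by Lemma \ref{totgeo}) point, the totally geodesic property immediately yields $P \subset H_x$ for all $x \in C$, placing $\phi(C)$ inside the $(d-3)$-dimensional projective family of hyperplanes containing $P$; sharpening this to a genuine pencil, and handling the $2$-planes $P$ not aligned with $E$, will require exploiting the first-order variation of the line field $L := E^{\perp}$ on $S^{d-1}$. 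I expect the cleanest route is to construct $A$ directly by setting $[Ax] := L_x$ projectively and translating the totally geodesic condition along each $v \in E_x$ into a linearity constraint strong enough to realize $x \mapsto Ax$ as a single linear endomorphism of $\R^{d}$, at which point the earlier calculation completes the argument.
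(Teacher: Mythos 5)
This theorem is not proved in the paper at all: it is quoted from Hangan--Lutz \cite{halu}, so there is no internal argument to compare against. Judged on its own, your proposal sets up the natural framework (the same one the paper attributes to \cite{halu}): pass to the hyperplane assignment $x \mapsto H_x = \R x \oplus E_x$, check antipodal invariance, and try to realize $[x] \mapsto [H_x]$ as a projective duality so that the fundamental theorem of projective geometry produces $A$. The downstream steps you give are fine: once $H_x=(Ax)^{\perp}$ with $A$ linear and nonsingular, the incidence $x\in H_x$ gives $\langle Ax,x\rangle=0$, polarization gives skew-symmetry, nonsingular skew-symmetric forces $d$ even, and rescaling invariance gives the class $[A]\in PGL(\R^d)$.

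The genuine gap is the step you yourself flag and then defer: showing that $\phi$ is a duality, i.e.\ that it is injective and carries the points of every projective line (every great circle $C=S^{d-1}\cap P$) to hyperplanes belonging to a single pencil, meaning all $H_x$, $x\in C$, contain a common $(d-2)$-dimensional subspace. This is the entire content of the Hangan--Lutz theorem; without it the fundamental theorem of projective geometry cannot be invoked. Your partial observation only treats circles tangent to $E$ and only yields $P\subset H_x$, which is a $2$-dimensional common subspace and hence far weaker than a pencil as soon as $d>4$; circles transverse to $E$ (the generic case) are not addressed at all, and neither is injectivity of $\phi$. The concluding suggestion to define $[Ax]:=L_x$ and extract linearity from ``a first-order variation of $L$'' is a plan, not an argument --- note in particular that no regularity of $E$ beyond the totally geodesic condition is assumed, so any argument using derivatives of $L$ must first establish that regularity (a point the paper emphasizes as the elegance of \cite{halu}: no a priori regularity is assumed, and analyticity is a consequence). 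A useful fact you did not exploit, and which is the natural starting point for the missing step, is the incidence symmetry $y\in H_x \iff x\in H_y$ that follows directly from the totally geodesic hypothesis; but converting that symmetric incidence relation into the collinearity-to-pencil property still requires real work, so as it stands the proof is incomplete at its central point.
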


The elegance of their approach lies in the fact that no a priori regularity assumption is made, while a posteriori the distribution is algebraic.  The following corollary is immediate (see \cite{halu}).

\begin{corollary}\label{contact}
A nonsingular codimension one totally geodesic distribution on an odd dimensional unit sphere is real-analytic and contact.
\end{corollary}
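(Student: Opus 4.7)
The plan is to reduce to the algebraic description of $E$ furnished by Theorem \ref{Amap} and then verify each property directly from that description. By Theorem \ref{Amap}, since $S^{d-1}$ is odd dimensional (so $d-1 = 2n+1$ for some $n \geq 0$), there exists a nonsingular skew-symmetric $A:\mathbb{R}^{d}\rightarrow \mathbb{R}^{d}$ with $E_x = \spn\{x, Ax\}^{\perp}$ for every $x \in S^{d-1}$. A different representative of the projective class $[A]$ scales the defining $1$-form by a nonzero constant and does not affect either conclusion, so I fix one such $A$.

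For real-analyticity, I would introduce the $1$-form $\alpha$ on $\mathbb{R}^{d}$ defined by $\alpha_x(v) = \langle Ax, v\rangle$. Its coefficients are linear polynomials in the coordinates of $x$, hence $\alpha$ is real-analytic on $\mathbb{R}^{d}$. Since $A$ is nonsingular, $Ax \neq 0$ for $x \neq 0$, and skew-symmetry gives $Ax \perp x$, so $Ax$ is a nonzero tangent vector to $S^{d-1}$ at each point. Therefore $\alpha|_{S^{d-1}}$ is a nowhere-vanishing real-analytic $1$-form whose kernel is precisely $E$; this realizes $E$ as a real-analytic distribution.

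For the contact property, I would show $\alpha \wedge (d\alpha)^n|_{S^{2n+1}}$ is a nowhere-vanishing top-degree form. The cleanest route is to choose an orthonormal basis $\{e_1,\ldots,e_{2n+2}\}$ of $\mathbb{R}^{2n+2}$ bringing $A$ into block-diagonal normal form, with blocks $\bigl(\begin{smallmatrix}0 & -a_i \\ a_i & 0\end{smallmatrix}\bigr)$ where $a_i \neq 0$ by nonsingularity. In these coordinates,
\begin{equation}\nonumber
\alpha = \sum_{i=1}^{n+1} a_i\bigl(x_{2i-1}\, dx_{2i} - x_{2i}\, dx_{2i-1}\bigr),\qquad d\alpha = 2\sum_{i=1}^{n+1} a_i\, dx_{2i-1}\wedge dx_{2i}.
\end{equation}
A direct expansion shows that $\alpha \wedge (d\alpha)^n$ equals a nonzero scalar multiple of $\bigl(\prod_i a_i\bigr)\,\iota_{X_{\mathrm{rad}}}(dx_1 \wedge \cdots \wedge dx_{2n+2})$, where $X_{\mathrm{rad}} = \sum x_j \partial_j$ is the outward radial vector field (the cross terms between different blocks are the only surviving contributions, exactly as in the $n=1$ case on $S^3$). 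Since $X_{\mathrm{rad}}$ is transverse to $S^{2n+1}$, this restricts to a volume form on the sphere, so $\alpha|_{S^{2n+1}}$ is a contact form and $E = \ker\alpha$ is a contact distribution.

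The main obstacle, such as it is, is the combinatorial verification that the wedge $\alpha \wedge (d\alpha)^n$ is indeed nonzero everywhere on the sphere; this is a routine but finicky computation in the normal form, with the key point being that every index pair $i$ contributes a nonzero factor $a_i$, and the sum telescopes to the interior product with the radial field. Everything else reduces to transparent consequences of Theorem \ref{Amap} and the nonsingularity hypothesis on $A$.
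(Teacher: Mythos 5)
Your proposal is correct and follows the route the paper intends: the corollary is deduced directly from Theorem \ref{Amap}, realizing $E$ as the kernel of the linear (hence real-analytic) $1$-form $\alpha_x(\cdot)=\langle Ax,\cdot\rangle$ and checking via the skew normal form that $\alpha\wedge(d\alpha)^n$ restricts to a volume form on $S^{2n+1}$. The paper simply labels this ``immediate'' and cites Hangan--Lutz; your computation, including the identification of $\alpha\wedge(d\alpha)^n$ with a nonzero multiple of $\iota_{X_{\mathrm{rad}}}(dx_1\wedge\cdots\wedge dx_{2n+2})$, is the standard verification and is accurate.
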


\begin{corollary}\label{geodesic}
Let $E$ be a nonsingular codimension one totally geodesic distribution on an odd dimensional unit sphere $S^{d-1}$.  The line field $L$ on $S^{d-1}$ defined by $L=E^{\perp}$ is totally geodesic if and only if $[A^2]=[-\Id]$, where $[A]$ is as in Theorem \ref{Amap} above.
\end{corollary}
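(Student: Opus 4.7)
By Theorem \ref{Amap} we may represent $E$ as $E_x = \spn\{x, Ax\}^{\perp}$ for a nonsingular skew-symmetric $A:\R^{d}\to\R^{d}$, and correspondingly $L_x=\spn\{Ax\}\subset T_xS^{d-1}$ (since $\g(x,Ax)=0$). The plan is to write the tangent condition for geodesics of $S^{d-1}$ emanating with initial velocity in $L_x$, and to reduce it to an algebraic condition on $A^{2}$.

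First I would fix $x\in S^{d-1}$, set $r=\|Ax\|>0$ and $v=Ax/r$, and note that the unit-speed geodesic of $S^{d-1}$ with $c(0)=x$ and $\dot c(0)=v$ is the great circle $c(t)=\cos(t)\,x+\sin(t)\,v$. The distribution $L$ is totally geodesic precisely when, for every such $x$, $\dot c(t)\in L_{c(t)}=\spn\{A c(t)\}$ for all $t$, i.e.\ when there exists a scalar function $\lambda(t)$ with $\dot c(t)=\lambda(t)\,A c(t)$.

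Next I would carry out the explicit comparison:
\begin{align*}
\dot c(t) &= -\sin(t)\,x+\tfrac{\cos(t)}{r}\,Ax,\\
A c(t) &= \cos(t)\,Ax+\tfrac{\sin(t)}{r}\,A^{2}x.
\end{align*}
Matching the $Ax$-coefficients forces $\lambda(t)\equiv 1/r$, and then matching the remaining components yields the pointwise identity
\[
A^{2}x = -\|Ax\|^{2}\,x \qquad \text{for every }x\in S^{d-1}.
\]
Thus $L$ is totally geodesic iff every unit vector is an eigenvector of the symmetric operator $A^{2}$. Since an everywhere-eigenvector symmetric operator on $\R^{d}$ must be a scalar, this is equivalent to $A^{2}=-c\,\Id$ for some $c>0$ (positivity follows from $c=\|Ax\|^{2}=-\langle A^{2}x,x\rangle$ together with nonsingularity of $A$), i.e.\ $[A^{2}]=[-\Id]$ in $PGL(\R^{d})$.

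Conversely, if $A^{2}=-c\,\Id$ then $\|Ax\|^{2}=c$ for all $x\in S^{d-1}$ and the computation above collapses to $\dot c(t)=\frac{1}{\sqrt c}\,A c(t)$, so $L$ is totally geodesic. The only mildly subtle point is the passage from the pointwise eigenvalue relation $A^{2}x=-\|Ax\|^{2}x$ to the scalar identity $A^{2}=-c\,\Id$; this is where one must invoke the fact that a self-adjoint operator all of whose unit vectors are eigenvectors is a scalar, together with the continuity of $x\mapsto\|Ax\|^{2}$. Everything else is direct linear algebra on a single great circle.
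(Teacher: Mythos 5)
Your argument is correct and takes essentially the same route as the paper: both evaluate the tangency condition along the great circle $c(t)=\cos(t)x+\sin(t)\,Ax/\|Ax\|$ and reduce it to $A^{2}x$ being a negative multiple of $x$ (the paper does this at $t=\pi/2$ after rescaling $A$ so that $\|Ax\|=1$). Your explicit final step --- that a symmetric operator all of whose unit vectors are eigenvectors is a scalar, so the pointwise relation $A^{2}x=-\|Ax\|^{2}x$ yields $[A^{2}]=[-\Id]$ --- is left implicit in the paper's proof and is a worthwhile clarification.
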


\begin{proof}
Assume that $[A^2]=[-\Id]$.  Choose a representative $A \in [A]$ with unit-modulus eigenvalues.  Then  $\|Av\|=1$ and $A^2 v=-v$ for each $v \in S^{d-1}$.  The geodesic $c(t)=\cos(t)v+\sin(t)Av$ satisfies $\dot{c}(0) \in L_v$.  Then $\dot{c}(t) \in L_{c(t)}$ since $\dot{c}(t)=-\sin(t)v+\cos(t)Av=Ac(t)$, concluding the proof that $L$ is totally geodesic.

Conversely, assume that $L$ is totally geodesic.  Let $v \in S^{d-1}$ and choose a representative $A \in [A]$ satisfying $\|Av\|=1$.  The geodesic $c(t)=\cos(t)v+\sin(t)Av$ satisfies $\dot{c}(t) \in L_{c(t)}$ for each $t \in \R$.  Set $t=\frac{\pi}{2}$ and conclude that the $2$-plane spanned by $v$ and $Av$ is invariant under $A$.  As $\|Av\|=1$ and $A$ is skew-symmetric, $A^2v=-v$, concluding the proof.
\end{proof}

Let $\mathcal{X}=\{x \in S^{d-1}\,\vert\, E_x=T_xS^{d}\}$ denote the singular set for a  codimension one totally geodesic distribution $E$ on $S^{d-1}$.  Given a subset $U\subset S^{d-1}$, let $\Sigma(U)=\spn\{U\}\cap S^{d-1}$ denote the smallest totally geodesic subsphere of $S^{d-1}$ containing $U$.

\begin{lemma}\label{singular}
The singular set $\mathcal{X}$ satisfies $\Sigma(\mathcal{X})=\mathcal{X}$. 
\end{lemma}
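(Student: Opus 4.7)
The plan is to reduce the claim to a pairwise case and then induct on $k := \dim \spn(\mathcal{X})$. The pairwise claim reads: \emph{if $x_{1}, x_{2} \in \mathcal{X}$ are linearly independent, then the great circle $C := \spn\{x_{1},x_{2}\} \cap S^{d-1}$ lies in $\mathcal{X}$.} Granting this, pick a basis $x_{1}, \ldots, x_{k}$ of $\spn(\mathcal{X})$ lying in $\mathcal{X}$. A point $y \in \Sigma(\mathcal{X})$ is either $\pm x_{k}$ or can be written $y = \alpha x_{k} + \beta y''$ with $y'' \in \spn\{x_{1}, \ldots, x_{k-1}\} \cap S^{d-1}$; by the inductive hypothesis $y'' \in \mathcal{X}$, and applying the pairwise claim to $\{x_{k}, y''\}$ puts $y$ in $\mathcal{X}$. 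The base case $k=1$ reads $-x \in \mathcal{X}$ whenever $x \in \mathcal{X}$, and follows from totally geodesicity: each $c_{v}(t) = \cos(t)x + \sin(t)v$ is tangent to $E$ at $x$ since $E_{x} = T_{x}S^{d-1}$, so $\dot{c}_{v}(\pi) = -v \in E_{-x}$, and letting $v$ range over $T_{x}S^{d-1}$ yields $E_{-x} = T_{-x}S^{d-1}$.

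For the pairwise case I would restrict to $2$-spheres. For a unit vector $u$ linearly independent from $x_{1}, x_{2}$, set $S^{2}(u) := \spn\{x_{1},x_{2},u\} \cap S^{d-1}$ and define $\widetilde{E}_{y} := E_{y} \cap T_{y}S^{2}(u)$. A short dimension check shows $\widetilde{E}$ is a codimension-at-most-one totally geodesic distribution on $S^{2}(u)$ still having $x_{1}$ and $x_{2}$ in its singular set. The crux is the following $S^{2}$ claim: \emph{any codimension-at-most-one totally geodesic distribution on $S^{2}$ with two linearly independent singular points is the full tangent distribution.} Granting it, $T_{y}S^{2}(u) \subseteq E_{y}$ for every $y \in C$ and every such $u$; as $u$ ranges over unit vectors orthogonal to $\spn\{x_{1},x_{2}\}$, the $2$-planes $T_{y}S^{2}(u)$ collectively span $T_{y}S^{d-1}$, forcing $E_{y} = T_{y}S^{d-1}$ and hence $y \in \mathcal{X}$.

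To prove the $S^{2}$ claim, suppose for contradiction that some regular $y \in S^{2}$ exists. Then $\widetilde{E}_{y}$ is a line, and the geodesic in $S^{2}$ tangent to it is a great circle $\ell_{y}$ tangent to $\widetilde{E}$ throughout. I would show $\ell_{y}$ must pass through $x_{1}$: otherwise, every great circle $c$ through $x_{1}$ is tangent to $\widetilde{E}$ (since $\widetilde{E}_{x_{1}} = T_{x_{1}}S^{2}$) and meets $\ell_{y}$ in antipodal points where $\dot{c}$ and $\dot{\ell}_{y}$ both lie in the $1$-dimensional $\widetilde{E}$, forcing the two distinct great circles to coincide---contradicting $x_{1} \in c \setminus \ell_{y}$; so such intersection points are singular, and varying $c$ covers $\ell_{y}$ by singular points, contradicting that $y \in \ell_{y}$ is regular. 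Symmetrically $\ell_{y}$ contains $x_{2}$, so $\ell_{y} = C$, and every regular point lies on $C$. To rule out regular points on $C$ itself: at a regular $y \in C$, radial tangency from $x_{1}$ forces $\widetilde{E}_{y}$ to be the tangent line of $C$, so the geodesic $\sigma$ through $y$ perpendicular to $C$ fails to be tangent to $\widetilde{E}$ at $y$, hence nowhere by the contrapositive of totally geodesicity; yet $\sigma$ immediately enters $S^{2} \setminus C$, where every point is singular and tangency is automatic---a contradiction.

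The principal obstacle will be this last step ruling out regular points on $C$ itself, which requires combining the ``regular-set is contained in $C$'' conclusion with the contrapositive of totally geodesicity along a geodesic transverse to $C$. Once the $S^{2}$ claim is secured, the $2$-plane covering argument and the induction on $\dim \spn(\mathcal{X})$ complete the lemma.
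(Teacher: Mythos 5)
Your argument is correct, and it reaches the pairwise great-circle statement the same way the paper does (antipodal closure plus an induction over a spanning set of $\mathcal{X}$ -- note the induction is really on the number of chosen points $x_1,\dots,x_j$, i.e.\ on the statement $\Sigma(\{x_1,\dots,x_j\})\subseteq\mathcal{X}$, rather than literally on $\dim\spn(\mathcal{X})$, but that is only a matter of phrasing). Where you genuinely diverge is in the proof of the pairwise claim. The paper stays in the ambient sphere: for $x_3\in C$ and an arbitrary line $L_2\subset T_{x_3}S^{d-1}$ it takes the great circle $C_2$ tangent to $L_2$, and at each $p\in C_2\setminus\{\pm x_3\}$ uses the two \emph{transverse} great circles joining $x_1$ to $p$ and $x_2$ to $p$ inside the $2$-sphere $\Sigma(C\cup C_2)$ (both everywhere tangent to $E$) to conclude $T_p\Sigma(C\cup C_2)\subset E_p$; hence $C_2$ is tangent to $E$ everywhere and $L_2\subset E_{x_3}$. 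You instead restrict $E$ to a totally geodesic $2$-sphere $S^2(u)\supset C$ and prove a self-contained classification statement -- a codimension-at-most-one totally geodesic distribution on $S^2$ with two linearly independent singular points is the full tangent distribution -- via your two-stage contradiction (regular points forced onto $C$ by the circles through $x_1$ and $x_2$, then eliminated by the perpendicular geodesic, which leaves $C$ immediately and is automatically tangent off $C$); you then must vary $u$ to upgrade $T_yS^2(u)\subseteq E_y$ to $E_y=T_yS^{d-1}$, a step the paper's direct argument does not need. Your route is slightly longer but isolates a reusable fact about distributions on $S^2$, while the paper's transversality computation gets full singularity of each point of $C$ in one pass; both rest on the same mechanism, namely that great circles through singular points are everywhere tangent to $E$.
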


\begin{proof}
There is nothing to prove if $\mathcal{X}=\emptyset$.  If $x \in \mathcal{X}$, then $-x \in \mathcal{X}$ since each great circle through $-x$ also passes through $x$.  It remains to prove that for linearly independent $x_1,x_2 \in \mathcal{X}$, the great circle $C_1:=\Sp(\{x_1,x_2\})\subset \mathcal{X}$.

If $x_3 \in C_1 \setminus \{\pm x_1,\pm x_2\}$, then the line $L_1:=T_{x_3}C_1$ is a subspace of $E_{x_3}$ since $x_1 \in \mathcal{X}$.  Let $L_2$ be any other line in $T_{x_3} S^{d-1}$ and let $C_2$ denote the great circle containing $x_3$ with tangent line $L_2$.  

Let $p \in C_2 \setminus \{\pm x_3\}$.  As $x_1,x_2 \in \mathcal{X}$ are linearly independent, the tangent lines at $p$ to the great circles in the totally geodesic $2$-sphere $\Sigma(C_1\cup C_2)$ that join $x_1$ to $p$ and $x_2$ to $p$ are transverse subspaces of $E_p\cap T_p \Sigma(C_1 \cup C_2)$.  Therefore $T_p\Sigma(C_1 \cup C_2) \subset E_p$.  In particular, the tangent line to $C_2$ at $p$ is a subspace of $E_p$, whence the line $L_2$ is a subspace of $E_{x_3}$, as required. 
\end{proof}

\begin{corollary}\label{basis}
The singular set $\mathcal{X}$ of a  codimension one totally geodesic distribution on $S^{d-1}$ does not contain a basis of $\R^{d}$.
\end{corollary}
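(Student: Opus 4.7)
The plan is to argue by contradiction, leveraging Lemma \ref{singular} which already tells us that $\mathcal{X}$ is closed under taking totally geodesic subspheres spanned by its points. Assume toward a contradiction that $\mathcal{X}$ contains a basis $\{x_1, \ldots, x_d\}$ of $\R^d$.

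First, I would observe that since $\spn\{x_1, \ldots, x_d\} = \R^d$, the smallest totally geodesic subsphere of $S^{d-1}$ containing this basis is $\Sigma(\{x_1,\ldots,x_d\}) = \R^d \cap S^{d-1} = S^{d-1}$ itself. Since $\{x_1,\ldots,x_d\} \subset \mathcal{X}$, monotonicity of $\Sigma$ and Lemma \ref{singular} give
\begin{equation*}
S^{d-1} = \Sigma(\{x_1,\ldots,x_d\}) \subset \Sigma(\mathcal{X}) = \mathcal{X}.
\end{equation*}

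Hence every point of $S^{d-1}$ is singular, meaning $E_x = T_x S^{d-1}$ for every $x$. But then every subspace in the distribution $E$ has codimension zero in the corresponding tangent space, so the codimension of $E$ (as defined in the paper as the greatest codimension of its subspaces) is zero, contradicting the hypothesis that $E$ is a codimension one distribution. This contradiction completes the proof.

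There is essentially no obstacle here; the real work was done in Lemma \ref{singular}, and this corollary is a one-line consequence combining that lemma with the defining property that some $E_x$ must actually be a hyperplane. The only point requiring a bit of care is to remember the paper's convention that ``codimension one'' means the maximum codimension equals one, so the existence of at least one non-singular point is built into the hypothesis and is exactly what is violated if $\mathcal{X}$ spans $\R^d$.
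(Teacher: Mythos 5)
Your argument is correct and is exactly the (implicit) proof intended by the paper: a basis in $\mathcal{X}$ forces $\Sigma(\mathcal{X})=S^{d-1}$, so Lemma \ref{singular} gives $\mathcal{X}=S^{d-1}$, contradicting that some $E_x$ must have codimension one. Nothing is missing.
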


The following simple lemma is applied to Riemannian exponential maps in subsequent sections.

\begin{lemma}\label{constant}
Let $E$ be a  codimension one totally geodesic distribution on $S^{d-1}$, $X$ a set, and $f:S^{d-1} \rightarrow X$ a function.  If $f$ is constant on curves everywhere tangent to $E$, then $f$ is constant.
\end{lemma}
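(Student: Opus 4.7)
The idea is to show that any two points $x, y \in S^{d-1}$ can be joined by a piecewise $E$-tangent great-circle path, since then the hypothesis forces $f(x) = f(y)$.

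First I would dispose of the singular case. Let $\mathcal{X} = \{v \in S^{d-1} : E_v = T_v S^{d-1}\}$ and suppose $\mathcal{X} \neq \emptyset$, picking some $x^* \in \mathcal{X}$. Since every direction at $x^*$ belongs to $E_{x^*}$, every great circle through $x^*$ has its initial tangent in $E$, hence by the totally geodesic property (Lemma \ref{totgeo}) is everywhere $E$-tangent. For any $y \in S^{d-1}$, choosing such a great circle through $y$ (unique if $y \neq \pm x^*$, arbitrary otherwise) yields $f(y) = f(x^*)$.

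When $\mathcal{X} = \emptyset$, Theorem \ref{Amap} applies: $d-1$ is odd, and there is a nonsingular skew-symmetric $A$ on $\R^d$ with $E_v = \spn\{v, Av\}^{\perp}$; the lemma is only of interest when $d \geq 4$, which I will tacitly assume. Given $x, y \in S^{d-1}$, I would look for an intermediate $z \in S^{d-1} \setminus \{\pm x, \pm y\}$ with $z \perp Ax$ and $z \perp Ay$. Since $(Ax)^{\perp} \cap (Ay)^{\perp}$ has dimension at least $d-2 \geq 2$, its intersection with $S^{d-1}$ is a sphere of dimension at least one, with ample room to avoid the four excluded vectors. The great circle from $x$ to $z$ lies in $\spn\{x, z\} \subset (Ax)^{\perp}$, so its initial tangent at $x$ is perpendicular to $Ax$ and therefore in $E_x$; total geodesicity then makes the whole arc $E$-tangent. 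By skew-symmetry, $\g(y, Az) = -\g(Ay, z) = 0$, so the identical argument shows that the great circle from $z$ to $y$ is $E$-tangent. Concatenating the two arcs gives $f(x) = f(z) = f(y)$.

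The main obstacle is producing a good intermediate $z$ in the nonsingular case: both existence (handled by the dimension count, which genuinely requires $d \geq 4$) and the requirement that both connecting arcs be $E$-tangent at both endpoints (handled by skew-symmetry of $A$). The degenerate subcase where $Ax$ and $Ay$ are linearly dependent forces $y = \pm x$ because $A$ is nonsingular; here any $E$-tangent great circle through $x$ also passes through $-x$, and such circles are plentiful since $\dim E_x = d-2 \geq 2$.
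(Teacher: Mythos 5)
Your proof is correct, but it takes a genuinely different and noticeably heavier route than the paper's. The paper's argument is two lines, with no case distinction and no appeal to Hangan--Lutz: for each $x$, the hypothesis makes $f$ constant on the union of great circles through $x$ with initial velocity in $E_x$, i.e.\ on the totally geodesic subsphere $\spn\{x,E_x\}\cap S^{d-1}$ of codimension at most one; any two such subspheres intersect, since their linear spans have dimension at least $d-1$ in $\R^{d}$, so $f(x)=f(y)$ for all $x,y$. Your intermediate point $z\in(Ax)^{\perp}\cap(Ay)^{\perp}$ is exactly a point of $\spn\{x,E_x\}\cap\spn\{y,E_y\}$, because in the nonsingular case $\spn\{x,E_x\}=(Ax)^{\perp}$; so the underlying geometry is the same, but you reach it by first splitting off the singular set and then invoking Theorem \ref{Amap} (the fundamental theorem of projective geometry), where only linear algebra and the totally geodesic hypothesis are needed --- and note that the totally geodesic property is a hypothesis of this lemma, not a consequence of Lemma \ref{totgeo}, which concerns eigenspace distributions. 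What your approach buys is an explicit pair of connecting arcs; what it costs is the dependence on Theorem \ref{Amap} and the case analysis. Your tacit restriction to $d\geq 4$ in the nonsingular branch is in fact harmless, since Theorem \ref{Amap} forces $d$ even there, so for $d=3$ --- where the paper does use the lemma, e.g.\ toward Theorem \ref{thm:A} and Corollary \ref{cor:E} --- the nonsingular branch is vacuous and your singular-case argument covers everything; still, the remark that the lemma ``is only of interest when $d\geq 4$'' is inaccurate, and the uniform argument above shows no dimension split is needed at all.
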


\begin{proof}
Let $x \in S^{d-1}$.  The assumption implies that $f$ is constant on the union of geodesics with initial velocity in $E_x$, a totally geodesic subsphere of $S^{d-1}$ of codimension at most one.  Any two such subspheres intersect.
\end{proof}

\section{Proofs of Theorems \ref{thm:A}, \ref{thm:B}, and \ref{thm:C}}
Throughout this section $M$ denotes a complete $d$-dimensional Riemannian manifold with $\sec \geq 1$ and spherical rank at least $d-2$. Then $M$ is closed and has $\cvc(1)$.  In particular, for each $v \in SM$, the $1$-eigenspace $E_v$ of the Jacobi operator $\mathcal{J}_v$ (see Definition \ref{use}) is a nonempty subspace of $v^{\perp}$.

Recall that a point $p \in M$ is \textit{isotropic} if $\sec(\sigma)$ is independent of the $2$-plane section $\sigma \subset T_pM$ and \textit{nonisotropic} otherwise.  Hence, $p$ is an isotropic point if and only if $E_v=v^{\perp}$ for each $v \in S_pM$.  Let $\mathcal{I}$ and $\mathcal{O}$ denote the subsets of isotropic and nonisotropic points in $M$, respectively.  Note that $\mathcal{I}$ is closed in $M$ and that $\mathcal{O}$ is open in $M$.

\subsection{Preliminary structure and Proof of Theorem \ref{thm:A}.}\label{sec:3.1}
This subsection discusses a number of preliminary structural results that culminate in a proof of Theorem \ref{thm:A}.

For $p \in M$ and $v \in S_pM$, let $P_t: T_pM \rightarrow T_{\gamma_v(t)} M$ denote parallel translation along the geodesic $\gamma_v(t)$.  Define the subspace $D_v \subset v^{\perp}$ by 
\begin{eqnarray}\label{spheredist}
D_v &=& \spn\{w \in v^{\perp}\, \vert\, \sec(\dot{\gamma_v}(t), P_t w)=1\,\,\,\,\,\forall t \in \R\}\nonumber\\
 &=&\spn\{w \in v^{\perp}\, \vert\, P_t w\in E_{\dot{\gamma}(t)}\,\,\,\,\, \forall t \in \R\}\nonumber.
 \end{eqnarray}  Note that $D_v$ is a subspace of $E_v$ for each $v \in S_pM$. The spherical rank assumption implies $\dim(D_v) \geq d-2$.  In particular, the $1$-eigenspace distribution $E$ is a  codimension one totally geodesic (by Lemma \ref{totgeo}) distribution on $S_pM$ when $p \in \mathcal{O}$.
 
\begin{lemma}\label{obvious}
For each $v \in S_pM$
\begin{enumerate}
\item $D_v=D_{-v}$
\item If $w \in D_v$, then $\sec(\dot{\gamma_v}(t), P_t w)=1$ for all $t \in \R$.
\end{enumerate}
\end{lemma}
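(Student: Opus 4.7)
The plan is to verify both items directly from the definition of $D_v$, relying only on the linearity of parallel transport, the symmetries of geodesics, and Lemma \ref{crit}.

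I would start with item (2), because handling it clarifies what $D_v$ really is. The defining condition ``$w \in v^{\perp}$ and $P_t w \in E_{\dot{\gamma}_v(t)}$ for all $t \in \R$'' already cuts out a linear subspace of $v^{\perp}$: each $E_{\dot{\gamma}_v(t)}$ is a subspace of $\dot{\gamma}_v(t)^{\perp}$ since it is the $1$-eigenspace of the self-adjoint Jacobi operator $\mathcal{J}_{\dot{\gamma}_v(t)}$, and parallel transport $P_t$ is a linear isometry. Consequently, if $w_1,w_2$ satisfy the condition and $a,b \in \R$, then $P_t(aw_1+bw_2)=aP_tw_1+bP_tw_2 \in E_{\dot{\gamma}_v(t)}$ for every $t$, so the span in the definition adds nothing. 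Thus every $w \in D_v$ satisfies $P_t w \in E_{\dot{\gamma}_v(t)}$, and by Lemma \ref{crit} this is equivalent to $\sec(\dot{\gamma}_v(t), P_t w) = 1$ for all $t$ (trivially for $w=0$, and using that $P_t$ preserves nonvanishing otherwise), which is exactly item (2).

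For item (1), I would exploit the time-reversal identity $\gamma_{-v}(t)=\gamma_v(-t)$. Writing $P_t^{v}$ for parallel transport along $\gamma_v$ from $\gamma_v(0)$ to $\gamma_v(t)$, the uniqueness of parallel transport yields $P_t^{-v} = P_{-t}^{v}$ as linear maps on $T_pM$ (after the natural identification $T_{\gamma_{-v}(t)}M = T_{\gamma_v(-t)}M$). Since $v^{\perp}=(-v)^{\perp}$ and sectional curvature is unchanged by flipping the sign of either argument, the condition $\sec(\dot{\gamma}_v(t), P_t^{v} w)=1$ for all $t \in \R$, reparametrized by $t \mapsto -t$, becomes $\sec(\dot{\gamma}_{-v}(t), P_t^{-v} w)=1$ for all $t \in \R$. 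Hence the generating sets used to define $D_v$ and $D_{-v}$ agree, and therefore so do their spans.

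The only place that requires a bit of care is the bookkeeping in item (1), where one must juggle the two sign conventions (reversing the initial velocity and reversing time) and confirm that the parallel transport identity $P_t^{-v} = P_{-t}^{v}$ holds with the intended identifications; once this is set up, both conclusions follow immediately and no curvature calculation beyond the definitions is needed.
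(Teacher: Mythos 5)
Your proof is correct and follows essentially the same route as the paper: for (2) you observe that the generating set in the definition of $D_v$ is already a linear subspace (the paper phrases this by expanding $w$ in a basis of that set and using linearity of $P_t$ together with the fact that $E_{\dot{\gamma}(t)}$ is a subspace), and for (1) you simply spell out the time-reversal bookkeeping that the paper dismisses as immediate from the definition. No gaps.
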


\begin{proof}
$(1)$ is immediate from the definition of $D_v$.  For (2), let $\{u_1,\ldots, u_k\}$ be a maximal collection of linearly independent vectors in 
\begin{equation}\nonumber
\{w \in v^{\perp}\, \vert\, P_t w \in E_{\dot{\gamma}(t)}\,\,\,\,\, \forall t\in \R\}
\end{equation} and express $w=\sum_{i=1}^{k} a_i u_i$. As $E_{\dot{\gamma}(t)}$ is a subspace, $P_t w=\sum_{i=1}^{k} a_i P_t u_i \in E_{\dot{\gamma}(t)}$ for each $t \in \R$, concluding the proof. 
\end{proof}

The rank of a vector $v \in S_pM$ is defined as $\dim(D_v)$.  The rank of a one dimensional linear subspace $L \leq T_pM$ is defined as the rank of a unit vector tangent to $L$.  The rank of a geodesic is the common rank of unit tangent vectors to the geodesic.

\begin{definition}
The \textit{spherical distribution} on $S_pM$, denoted by $D$, is the  tangent distribution defined by 
\begin{equation}\nonumber
S_pM \ni v \mapsto D_v \subset T_v(S_pM).\end{equation} Let $\mathcal{D}_p$ denote the subset of $S_pM$ consisting of rank $d-2$ vectors and let $\mathcal{D}=\cup_{p \in M} \mathcal{D}_p$ denote the collection of all rank $d-2$ unit vectors in $SM$. 
\end{definition}

As parallel translations along geodesics and sectional curvatures are continuous, the rank of vectors cannot decrease under taking limits.  This implies the following:

\begin{lemma}\label{cont}
For each $p \in M$, the regular set $\mathcal{D}_p$ is open and the spherical distribution $D$ on $S_pM$ is continuous on its regular set $\mathcal{D}_p$.
\end{lemma}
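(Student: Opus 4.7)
The plan is to establish upper semi-continuity of the integer-valued function $v \mapsto \dim D_v$ on $S_pM$, and then deduce both claims almost immediately from this. The only real input needed is that the defining condition for $D_v$ is closed under taking limits; everything else reduces to a standard Grassmannian compactness argument.

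First I would prove the following limit property: if $v_n \to v$ in $S_pM$, $w_n \in v_n^\perp$ with $w_n \to w$, and $w_n \in D_{v_n}$ for all $n$, then $w \in D_v$. The geodesics $\gamma_{v_n}(t)$ converge smoothly on compact time intervals to $\gamma_v(t)$, parallel translation along them varies continuously with the initial data, and the sectional curvature is continuous. Hence the condition $\sec(\dot\gamma_{v_n}(t), P_t^{v_n} w_n) \equiv 1$ passes to the limit, giving $\sec(\dot\gamma_v(t), P_t^v w) \equiv 1$. Equivalently (Lemma \ref{crit}), $P_t^v w \in E_{\dot\gamma_v(t)}$ for all $t$, so $w \in D_v$.

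Next I would upgrade this to upper semi-continuity of $\dim D$. Given $v_n \to v$, choose an orthonormal basis $\{e_n^1, \dots, e_n^{k_n}\}$ of $D_{v_n}$ with $k_n = \dim D_{v_n}$. Passing to a subsequence, I may assume $k_n$ is a constant $k$ and each $e_n^i$ converges to some $e^i$; the limit vectors form an orthonormal $k$-frame in $v^\perp$, and by the previous paragraph each $e^i \in D_v$. Therefore $\dim D_v \geq k$, and since every subsequential limit of $\dim D_{v_n}$ can be realized this way, $\dim D_v \geq \limsup_n \dim D_{v_n}$. The spherical rank hypothesis gives $\dim D_v \geq d-2$ for all $v$, so
\[
\mathcal{D}_p = \{v \in S_pM : \dim D_v = d-2\} = \{v \in S_pM : \dim D_v \leq d-2\},
\]
and this set is open by upper semi-continuity.

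For continuity of $D$ on $\mathcal{D}_p$, I would view $D$ as a map from $\mathcal{D}_p$ into the Grassmannian of $(d-2)$-planes. For $v_n \to v$ in $\mathcal{D}_p$, the same basis-extraction argument shows that any subsequential limit of $D_{v_n}$ in the Grassmannian is a $(d-2)$-plane contained in $D_v$; since $\dim D_v = d-2$, it must equal $D_v$. Hence the full sequence converges to $D_v$. The main (minor) obstacle is simply bookkeeping: all the passing-to-subsequence arguments work because $S_pM$ and the relevant Grassmannian are compact, and parallel transport depends continuously on its initial data, so no nontrivial geometric input is required beyond Lemma \ref{crit}.
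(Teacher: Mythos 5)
Your proposal is correct and is essentially the paper's own argument: the paper simply observes that, since parallel translation and sectional curvature are continuous, the rank of vectors cannot decrease under taking limits, which is precisely the semicontinuity statement you prove via the Grassmannian compactness/basis-extraction argument (with the fact that every element of $D_{v_n}$ satisfies the curvature-one condition supplied by Lemma \ref{obvious}(2)). Both openness of $\mathcal{D}_p$ and continuity of $D$ on it then follow exactly as you describe.
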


\begin{lemma}\label{same}
If $p$ is a nonisotropic point, i.e. $p \in \mathcal{O}$, then the spherical distribution $D$ and eigenspace distribution $E$ coincide on $S_pM$.
\end{lemma}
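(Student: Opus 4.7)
My plan is to split by the dimension of $D_v$ and reduce to the key case. The containment $D_v\subseteq E_v$ is immediate from evaluating the defining condition at $t=0$, so I will focus on the reverse inclusion. The spherical rank hypothesis forces $\dim D_v\in\{d-2,d-1\}$, and when $\dim D_v=d-1$ we automatically have $D_v=v^\perp\supseteq E_v$. Hence I would reduce the problem to showing that for every $v\in\mathcal{D}_p$ (the regular set of $D$), one has $v\notin\mathcal{I}_p$, equivalently $\dim E_v=d-2$, which combined with $D_v\subseteq E_v$ forces $D_v=E_v$.

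To prove $\mathcal{D}_p\cap\mathcal{I}_p=\emptyset$, I would argue by contradiction, choosing a hypothetical $v\in\mathcal{D}_p\cap\mathcal{I}_p$ and a unit vector $F\in v^\perp\setminus D_v$ (possible since $\dim D_v=d-2<d-1=\dim v^\perp$), and then studying the great circle $c(s):=\cos(s)v+\sin(s)F$ in $S_pM$. The structural information I would invoke is that, by Lemma \ref{singular} applied to the totally geodesic distribution $E$ on $S_pM$ (Lemma \ref{totgeo}), the singular set has the form $\mathcal{I}_p=V\cap S_pM$ with $V:=\spn(\mathcal{I}_p)$, and by Corollary \ref{basis}, $V$ is a proper subspace of $T_pM$. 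The analysis then splits into two subcases based on whether $F\in V$.

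When $F\notin V$, the great circle $c(s)$ leaves $V$ for small nonzero $s$ (since $v\in V$, one has $c(s)\in V$ iff $\sin(s)=0$), so $c(s)\notin\mathcal{I}_p$ and $\dim E_{c(s)}=d-2$ there. The openness of $\mathcal{D}_p$ (Lemma \ref{cont}) places $c(s)$ in the regular set of $D$ for small $s$, forcing $D_{c(s)}=E_{c(s)}$ by a dimension match; Lemma \ref{totgeo} then places $\dot c(s)$ in $D_{c(s)}$, and the continuity of $D$ on $\mathcal{D}_p$ (again Lemma \ref{cont}) passes to the limit $F=\dot c(0)\in D_v$, contradicting the choice of $F$. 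This rules out the first subcase, so I would conclude that every unit $F\in v^\perp\setminus D_v$ must lie in $V$. Since $v^\perp\setminus D_v$ is dense in $v^\perp$ (its complement is a proper subspace) and $V$ is closed, this forces $v^\perp\subseteq V$, and together with $v\in V$, yields $V=T_pM$, contradicting Corollary \ref{basis}.

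I expect the main obstacle to be conceptual rather than technical: recognizing that the tension comes from combining the algebraic subsphere structure of $\mathcal{I}_p$ (from Lemma \ref{singular} and Corollary \ref{basis}) with the upper semicontinuity of rank and the totally geodesic behavior of $E$. Once framed this way, the contradiction is a direct limit argument along a great circle transverse to the singular set; the technical subtlety is ensuring that the convergence $D_{c(s)}\to D_v$ of subspaces in the Grassmannian is justified by Lemma \ref{cont} and is compatible with $\dot c(s)\to F$, so that closedness forces $F\in D_v$.
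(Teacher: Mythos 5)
Your proof is correct, and while it shares the paper's skeleton---reduce to a rank $d-2$ vector $v$ with $E_v=T_v(S_pM)$, then run great circles through $v$ and play the totally geodesic property of $E$ (Lemma \ref{totgeo}) against the openness/continuity of $D$ on $\mathcal{D}_p$ (Lemma \ref{cont}), ending in a contradiction with Corollary \ref{basis}---the execution is genuinely different. The paper does not use Lemma \ref{singular} here: it works at the far endpoint $x\in S_pM\setminus \exp_v(D_v)$ of the circle, asserts via Lemma \ref{cont} that $T_xC(v,x)$ is transverse to $D_x$, and deduces $E_x\supseteq D_x+T_xC(v,x)=T_x(S_pM)$, so that the singular set of $E$ contains a spanning family of nearby vectors, contradicting Corollary \ref{basis}. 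You instead import the linear structure of the singular set at the outset (it equals $V\cap S_pM$ with $V=\spn(\mathcal{I}_p)$ proper, by Lemma \ref{singular} and Corollary \ref{basis}), split on whether $F\in V$, and in the transverse case pull the conclusion back to the basepoint: for small $s\neq 0$ the circle is regular for $D$ and nonsingular for $E$, so $D_{c(s)}=E_{c(s)}$ by dimension count, total geodesicity gives $\dot c(s)\in D_{c(s)}$, and closedness of $D$ over $\mathcal{D}_p$ yields $F\in D_v$, a contradiction; the remaining case then forces $v^{\perp}\subseteq V$ by density and hence $V=T_pM$. What your route buys is that it sidesteps the paper's transversality assertion for $T_xC(v,x)$ versus $D_x$ (the one step in the paper's argument requiring care when the direction from $v$ to $x$ is nearly tangent to $D_v$), replacing it with the clean dichotomy on $F$; the cost is the extra input of Lemma \ref{singular}. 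One small bookkeeping point you should make explicit: Lemma \ref{singular} and Corollary \ref{basis} apply because $E$ is a codimension one totally geodesic distribution on $S_pM$, which holds precisely since $p\in\mathcal{O}$ and $\dim E_x\geq\dim D_x\geq d-2$.
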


\begin{proof}
If not, then there exists a rank $d-2$ vector $v\in \mathcal{D}_p$ with the property that $E_v=T_v(S_pM)$.  Consider the codimension one totally geodesic subsphere $S \subset S_pM$ containing $v$ and determined by $T_vS=D_v$, namely $S=\exp_v(D_v)$.

Given $x \in S_pM\setminus S$ sufficiently close to $v$, let $C(v,x)$ denote the great circle through $v$ and $x$. Lemma \ref{cont} implies that the tangent line $T_x C(v,x)$ is transverse to the subspace $D_x$.  As $E$ is totally geodesic and $E_v=T_v(S_pM)$, it follows that $T_x C(v,x) \subset E_x$.  Conclude that $E_x=T_x(S_pM)$.  Lemma \ref{basis} implies that $E=T(S_pM)$, a contradiction since $p \in \mathcal{O}$.
\end{proof}

\begin{convention}
Parallel translation in $T_pM$ identifies the spherical distribution $D$ on $S_pM$ with a distribution defined on the tangent sphere $S(0, \pi) \subset T_pM$.  The latter is also denoted by $D$ when unambiguous.
\end{convention}


\begin{lemma}\label{spherjab}
Let $v \in S_pM$.  If $w\in D_v$, then $J(t)=\sin(t)P_t w$ is a Jacobi field along $\gamma_v(t)$.  In particular, $D_{\pi v} \subset K_{\pi v}$, where $K_{\pi v} = \ker(\dd(\exp_p)_{\pi v})$ (see \ref{kern}).
\end{lemma}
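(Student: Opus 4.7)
The plan is to verify the Jacobi equation directly for the candidate field $J(t) = \sin(t) P_t w$ and then read off the kernel statement from the variational formula \eqref{jabfield}. The two ingredients needed are already in hand: the parallel field $P_t w$ along $\gamma_v$ has vanishing covariant derivative, and by Lemma \ref{obvious}(2) combined with Lemma \ref{crit}, membership $w \in D_v$ forces $P_t w$ to be a $1$-eigenvector of the Jacobi operator $\mathcal{J}_{\dot\gamma_v(t)}$ for every $t$, i.e.
\[
R(P_t w, \dot\gamma_v(t))\dot\gamma_v(t) = P_t w \quad \text{for all } t \in \mathbb{R}.
\]

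For the first assertion I would differentiate $J(t)=\sin(t)P_t w$ twice along $\gamma_v$. Since $P_t w$ is parallel, $J'(t) = \cos(t)\, P_t w$ and $J''(t) = -\sin(t)\, P_t w$. Using the eigenvalue identity above, $R(J(t),\dot\gamma_v(t))\dot\gamma_v(t) = \sin(t)\, P_t w$, so $J'' + R(J,\dot\gamma_v)\dot\gamma_v \equiv 0$, and $J(0) = 0$. Thus $J$ is the (normal) Jacobi field along $\gamma_v$ with initial conditions $J(0)=0$, $J'(0)=w$.

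For the second assertion I would appeal to the variational formula \eqref{jabfield}, which for this Jacobi field gives $\sin(t)P_t w = \dd(\exp_p)_{tv}(tw)$ for all $t$. Evaluating at $t = \pi$ yields $\dd(\exp_p)_{\pi v}(\pi w) = 0$, and since $K_{\pi v}$ is a linear subspace, $w \in K_{\pi v}$. Under the convention that parallel translation in $T_pM$ identifies $D_v \subset v^{\perp}$ with the corresponding subspace at the point $\pi v \in S(0,\pi)$, this is precisely the inclusion $D_{\pi v} \subset K_{\pi v}$.

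There is essentially no obstacle here: the lemma is a direct packaging of the definition of $D_v$, the curvature characterization in Lemma \ref{crit}, and the Jacobi-field form of $\dd \exp_p$. The only bookkeeping care needed is keeping straight the identification between $v^\perp \subset T_pM$, the tangent space $T_v(S_pM)$, and the tangent space to $T_pM$ at the radial point $\pi v$, which is why the convention immediately preceding the lemma is invoked.
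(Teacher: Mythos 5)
Your proposal is correct and follows essentially the same route as the paper: Lemmas \ref{crit} and \ref{obvious}(2) give $R(P_t w,\dot{\gamma}_v)\dot{\gamma}_v = P_t w$, a direct computation verifies the Jacobi equation for $\sin(t)P_t w$, and the kernel inclusion then follows from (\ref{jabfield}) since $J(\pi)=0$. No differences worth noting.
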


\begin{proof}
Lemma \ref{crit} and Lemma \ref{obvious}(2) imply  $J''(t)+R(J,\dot{\gamma}_v)\dot{\gamma}_v(t)=\sin(t)(-P_t w+R(P_t w,\dot{\gamma}_v)\dot{\gamma}_v(t))=0$ for all $t \in \R$.  As $J(\pi)=0$, $w \in K_{\pi v}$ by (\ref{jabfield}). 
\end{proof}

\begin{corollary}\label{point}
If $p \in \mathcal{O}$, then the restriction of $\exp_p$ to the tangent sphere $S(0,\pi)$ is a point map.
\end{corollary}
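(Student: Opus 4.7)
The plan is to apply Lemma \ref{constant} to the restriction $f := \exp_p\big|_{S(0,\pi)}$, with the distribution supplied by the spherical distribution $D$ transported from $S_pM$ to $S(0,\pi)$. The three ingredients we need are: (i) total geodesicity of the relevant distribution, (ii) codimension at most one, and (iii) the fact that $f$ is constant along every curve tangent to that distribution.

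First, since $p \in \mathcal{O}$, Lemma \ref{same} gives $D_v = E_v$ for every $v \in S_pM$. Combining this with Lemma \ref{totgeo}, the spherical distribution $D$ is totally geodesic on $S_pM$. The spherical rank hypothesis $\dim(D_v) \geq d-2$ ensures that $D$ has codimension at most one. Identify $S_pM$ with $S(0,\pi)$ via the homothety $v \mapsto \pi v$ (equivalently, parallel transport in $T_pM$ composed with radial scaling); this is a similarity of Euclidean spheres and so it carries totally geodesic distributions to totally geodesic distributions. Denoting the resulting distribution on $S(0,\pi)$ again by $D$, we obtain a totally geodesic distribution on $S(0,\pi)$ of codimension at most one.

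Second, for each $v \in S_pM$, Lemma \ref{spherjab} yields $D_{\pi v} \subset K_{\pi v} = \ker\!\big(\dd(\exp_p)_{\pi v}\big)$. Consequently, for any smooth curve $c : I \to S(0,\pi)$ that is everywhere tangent to $D$, we have $\dot c(t) \in K_{c(t)}$ for all $t$, so
\begin{equation*}
(\exp_p \circ c)'(t) = \dd(\exp_p)_{c(t)}\!\big(\dot c(t)\big) = 0,
\end{equation*}
which shows that $\exp_p \circ c$ is constant. In particular, $f$ is constant along every curve in $S(0,\pi)$ that is everywhere tangent to $D$.

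Third, apply Lemma \ref{constant} with $X = M$, with $S^{d-1}$ identified with $S(0,\pi)$, and with the totally geodesic codimension at most one distribution $D$ just constructed. The conclusion is that $f = \exp_p\big|_{S(0,\pi)}$ is constant, as desired. There is no real obstacle here; the work has already been done in Lemmas \ref{same}, \ref{totgeo}, \ref{spherjab}, and \ref{constant}, and the corollary simply packages them together. The only point requiring care is the bookkeeping: checking that the identification $S_pM \cong S(0,\pi)$ preserves total geodesicity of $D$, and recording that the kernel containment of Lemma \ref{spherjab} holds at every point of $S(0,\pi)$, which follows because the rank hypothesis applies uniformly to every geodesic issuing from $p$.
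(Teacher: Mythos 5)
Your proposal is correct and follows essentially the same route as the paper: identify $D$ with $E$ via Lemma \ref{same} (giving a totally geodesic, codimension at most one distribution by Lemma \ref{totgeo} and the rank hypothesis), note via Lemma \ref{spherjab} that $\exp_p$ is constant along curves tangent to this distribution on $S(0,\pi)$, and conclude with Lemma \ref{constant}. The only difference is that you make explicit the bookkeeping about transporting the distribution to $S(0,\pi)$, which the paper leaves implicit.
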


\begin{proof}
The map $\exp_p$ is constant on curves tangent to the kernel distribution defined by $S(0,\pi) \ni \pi v \mapsto K_{\pi v}\subset T_{\pi v} (S(0,\pi))$.  The distributions $E$ and $D$ coincide on $S_pM$ by Lemma \ref{same}.  Lemma \ref{spherjab} implies that $\exp_p$ is constant on curves tangent to the distribution $E$.  Lemma \ref{constant} implies the corollary.
\end{proof}

Let $\phi_t:SM \rightarrow SM$, $t\in \R$, denote the geodesic flow.  For $T>0$, let $$\fix_{T}=\{v \in SM \vert\, \phi_Tv=v\}.$$

\begin{lemma}\label{closed1}
If $p \in \mathcal{O}$, then $S_pM \subset \fix_{2\pi}$.
\end{lemma}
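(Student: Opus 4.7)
Plan: Given $p\in\mathcal{O}$ and $v\in S_pM$, I must show $\phi_{2\pi}(v)=v$, that is, both $\gamma_v(2\pi)=p$ and $\dot\gamma_v(2\pi)=v$. The strategy is first to establish the base-point assertion $\gamma_v(2\pi)=p$ by bootstrapping Corollary \ref{point} from $p$ to $q:=\gamma_v(\pi)$, and then to argue the tangent-vector assertion by a symmetric repetition of the same mechanism.

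To set up, Corollary \ref{point} produces $q\in M$ such that $\exp_p$ is constant on the radius-$\pi$ sphere $S(0,\pi)\subset T_pM$ with value $q$; hence $\gamma_{v'}(\pi)=q$ for every $v'\in S_pM$. Next, the restriction $\Phi:=\phi_\pi|_{S_pM}:S_pM\to S_qM$ is a diffeomorphism. Injectivity is immediate from $\phi_\pi$ being a global diffeomorphism of $SM$. The spheres $S_pM$ and $S_qM$ are compact connected $(d-1)$-manifolds (connectedness uses $d\geq 3$); invariance of domain applied to the diffeomorphism $\phi_\pi$ of $SM$ forces the image of $\Phi$ to be open, compactness makes it closed, and nonemptiness plus connectedness of $S_qM$ force it to be all of $S_qM$.

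The key observation is that this bijection lets me prove $\exp_q$ is also constant on the radius-$\pi$ sphere in $T_qM$, with common value $p$. Indeed, given any $u\in S_qM$, let $v'\in S_pM$ be the unique preimage $\Phi^{-1}(-u)$, so $\dot\gamma_{v'}(\pi)=-u$; reversing $\gamma_{v'}$ yields the geodesic from $q$ in direction $u$, which reaches $\gamma_{v'}(0)=p$ at time $\pi$. Thus $\exp_q(\pi u)=p$ for every $u\in S_qM$. Specializing to $u=\dot\gamma_v(\pi)$ gives $\gamma_v(2\pi)=\exp_q(\pi\dot\gamma_v(\pi))=p$, completing the base-point assertion.

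For the tangent-vector assertion, I repeat the above argument with the roles of $p$ and $q$ interchanged to obtain a diffeomorphism $\Phi':=\phi_\pi|_{S_qM}:S_qM\to S_pM$. Since $\phi_{-\pi}\circ\phi_\pi=\operatorname{Id}_{SM}$, the inverse of $\Phi$ is $\phi_{-\pi}|_{S_qM}$, and the desired identity $\phi_{2\pi}|_{S_pM}=\Phi'\circ\Phi=\operatorname{Id}_{S_pM}$ is equivalent to the coincidence $\Phi'=\Phi^{-1}$, that is, $\dot\gamma_u(\pi)=\dot\gamma_u(-\pi)$ in $T_pM$ for every $u\in S_qM$. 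I expect this last coincidence to be the main obstacle, as it expresses a symmetry of parallel transport along the two halves of the geodesic loop $\gamma_u|_{[-\pi,\pi]}$ at $p$; my plan is to extract it from the rigid codimension-one totally geodesic structure of the eigenspace distribution on $S_pM$ and $S_qM$ provided by Theorem \ref{Amap}, combined with the coincidence $D=E$ supplied by Lemma \ref{same} and the parallel invariance of $D$ along geodesics.
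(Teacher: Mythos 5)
Your first half is fine: using Corollary \ref{point}, the restriction $\Phi=\phi_\pi|_{S_pM}:S_pM\to S_qM$ is an injective continuous map between compact connected manifolds of equal dimension, hence onto by invariance of domain, and reversing geodesics then gives $\exp_q(\pi u)=p$ for all $u\in S_qM$, so $\gamma_v(2\pi)=p$. But this only returns the footpoint. The statement $S_pM\subset\fix_{2\pi}$ requires $\dot\gamma_v(2\pi)=v$ as well, and that is exactly the step you defer to a ``plan'': the identity $\phi_\pi|_{S_qM}=\phi_{-\pi}|_{S_qM}$ (equivalently $\Phi'=\Phi^{-1}$) is never proved. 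This is not a minor loose end; it is the entire content of the lemma beyond Corollary \ref{point}, and the paper's proof is devoted precisely to establishing the equivalent velocity-matching claim $\dot\gamma_v(\pi)=-\dot\gamma_{-v}(\pi)$. Your sketched route is also doubtful as stated: Theorem \ref{Amap} applies only to a \emph{nonsingular} codimension one totally geodesic distribution, and at this point in the paper (before Lemma \ref{cartan2} and Proposition \ref{constantrank} are available) it is not yet known that every vector in $S_pM$ has rank exactly $d-2$, so the eigenspace distribution on $S_pM$ could a priori be singular; and even granting nonsingularity, it is unclear how the algebraic description of the distribution would force the pointwise coincidence of the two time-$\pi$ flow maps.

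The mechanism the paper uses for this step is short and local, and it is what your proposal is missing: since $\mathcal{O}$ is open, choose $0<\epsilon<\inj(M)$ with $\gamma_v(\epsilon)\in\mathcal{O}$ and apply Corollary \ref{point} at that point as well. This gives $\gamma_v(\pi+\epsilon)=\gamma_{-v}(\pi-\epsilon)$, so the two geodesic segments $\gamma_v([\pi,\pi+\epsilon])$ and the reversal of $\gamma_{-v}([\pi-\epsilon,\pi])$ both have length $\epsilon<\inj(M)$ and join $\gamma_v(\pi)=\gamma_{-v}(\pi)$ to this common point; uniqueness of such short geodesics forces them to coincide, giving $\dot\gamma_v(\pi)=-\dot\gamma_{-v}(\pi)$, from which $\gamma_v(\pi+t)=\gamma_{-v}(\pi-t)$ and hence both $\gamma_v(2\pi)=p$ and $\dot\gamma_v(2\pi)=v$ follow at once. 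To complete your argument you would either need to supply this (or an equivalent) argument for $\Phi'=\Phi^{-1}$; as written, the proposal proves only that $\exp_p(2\pi v)=p$.
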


\begin{proof}
Corollary \ref{point} implies $\exp_p(S(0,\pi))=\{p'\}$ for some $p' \in M$.  The lemma is a consequence of the \textit{Claim}: for $v \in S_pM$, the geodesics $\gamma_v(t)$ and $\gamma_{-v}(t)$ satisfy $\dot{\gamma}_v(\pi)=-\dot{\gamma}_{-v}(\pi)$.

There exists a positive $\epsilon< \inj(M)$ such that $\gamma_v(\epsilon) \in \mathcal{O}$ since $\mathcal{O}$ is open in $M$.  Let $w=\dot{\gamma}_v(\epsilon)$.  Corollary \ref{point} implies that $q':=\gamma_w(\pi)=\gamma_{-w}(\pi)=\gamma_{-v}(\pi-\epsilon)$.  The geodesic segments $\gamma_{w}([\pi-\epsilon,\pi])$ and $\gamma_{-v}([\pi-\epsilon,\pi])$ each have length $\epsilon$ and meet at the points $p'$ and $q'$.  As $\epsilon < \inj(M)$, these segments coincide, implying the claim.
\end{proof}

\begin{lemma}\label{maxjab}
Let $v \in S_pM$ have rank $d-2$ and let $w$ be a unit vector in $v^{\perp} \cap D_v^{\perp}$.  The initially vanishing normal Jacobi field $J(t)$ along $\gamma_v(t)$  with $J(0)=0$ and $J'(0)=w$ has the form $J(t)=f(t)P_t w$ where $f(t)$ is the solution to the ODE $f''+\sec(P_t w,\dot{\gamma}_v)f=0$ with initial conditions $f(0)=0$ and $f'(0)=1$.
\end{lemma}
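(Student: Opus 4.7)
The plan is to show that $P_tw$ is, for every $t$, an eigenvector of the Jacobi operator $\mathcal J_{\dot\gamma_v(t)}$; once this is established, the ansatz $J(t)=f(t)P_tw$ reduces the vector Jacobi equation to the scalar ODE in the statement, and the conclusion follows from uniqueness of Jacobi fields with prescribed initial data.

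First I would set up the dimension count. Since $v$ has rank $d-2$, $\dim D_v=d-2$, so $v^\perp\cap D_v^\perp$ is one-dimensional and the unit vector $w$ spans it. Next I would observe that the spherical subspace is invariant under parallel transport along its own geodesic: if $u\in D_v$, then for every $s,t\in\R$ one has $P_s(P_tu)=P_{s+t}u\in E_{\dot\gamma_v(s+t)}$ by the definition of $D_v$, so $P_tu\in D_{\dot\gamma_v(t)}\subset E_{\dot\gamma_v(t)}$. Hence $P_tD_v$ is a $(d-2)$-dimensional subspace of $\dot\gamma_v(t)^\perp$ consisting of eigenvectors of $\mathcal J_{\dot\gamma_v(t)}$ with eigenvalue $1$.

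Now since parallel transport preserves inner products and $w\perp D_v$, one has $P_tw\perp P_tD_v$. Comparing dimensions in the $(d-1)$-dimensional space $\dot\gamma_v(t)^\perp$ gives
\[
(P_tw)^\perp\cap\dot\gamma_v(t)^\perp = P_tD_v,
\]
which consists of eigenvectors of $\mathcal J_{\dot\gamma_v(t)}$. Lemma \ref{maximal} then applies and forces $P_tw$ itself to be an eigenvector of $\mathcal J_{\dot\gamma_v(t)}$; its eigenvalue must equal $\sec(P_tw,\dot\gamma_v(t))$ by Lemma \ref{crit} (or directly from the definition of the Jacobi operator).

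Finally, for $J(t)=f(t)P_tw$ with $P_tw$ parallel, one computes $J''(t)=f''(t)P_tw$ and $R(J,\dot\gamma_v)\dot\gamma_v(t)=f(t)\mathcal J_{\dot\gamma_v(t)}(P_tw)=f(t)\sec(P_tw,\dot\gamma_v(t))\,P_tw$. Therefore $J$ satisfies the Jacobi equation if and only if $f$ satisfies $f''+\sec(P_tw,\dot\gamma_v)f=0$. The prescribed initial conditions $f(0)=0$, $f'(0)=1$ give $J(0)=0$ and $J'(0)=w$, so uniqueness of normal Jacobi fields with these initial conditions completes the proof. The only substantive point is the application of Lemma \ref{maximal}; everything else is a direct verification, and I do not anticipate a genuine obstacle beyond being careful with the dimension count that makes $(P_tw)^\perp\cap\dot\gamma_v(t)^\perp$ equal (not merely contain) $P_tD_v$.
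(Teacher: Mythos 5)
Your proposal is correct and follows the same route as the paper: the paper's proof likewise uses the rank hypothesis to identify $(P_tw)^\perp\cap\dot\gamma_v(t)^\perp$ with the parallel-translated spherical subspace of $1$-eigenvectors, invokes Lemma \ref{maximal} to conclude $P_tw$ is an eigenvector of $\mathcal J_{\dot\gamma_v(t)}$ with eigenvalue $\sec(P_tw,\dot\gamma_v(t))$, and then reduces the Jacobi equation to the scalar ODE. Your write-up merely spells out the dimension count and parallel-invariance of $D$ that the paper leaves implicit.
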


\begin{proof}
The initial conditions $f(0)=0$ and $f'(0)=1$ imply the initial conditions $J(0)=0$ and $J'(0)=w$.  The hypotheses and Lemma \ref{maximal} imply that $P_t w$ is an eigenvector of $\mathcal J_{\dot{\gamma}_v(t)}$ with eigenvalue $\sec(P_t w,\dot{\gamma}_v)(t)$.  Consequently, 
\begin{equation}\nonumber
J''(t)+R(J,\dot{\gamma}_v)\dot{\gamma_v}(t)=[f''(t)+\sec(P_t w,\dot{\gamma}_v(t))f(t)]P_t w=0
\end{equation} concluding the proof.
\end{proof}

\begin{corollary}\label{rank}
A vector $v \in S_pM$ has rank $d-1$ if and only if $\pi v \in \fc(p)$.
\end{corollary}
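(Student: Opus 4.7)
My plan is to prove the two implications separately, using the lemmas already established and the Morse index theorem.

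For the forward direction $(\Rightarrow)$, suppose $v \in S_pM$ has rank $d-1$, so $D_v = v^\perp$. By Lemma \ref{obvious}(2) combined with Lemma \ref{crit}, for every $w \in v^\perp$ the parallel translate $P_t w$ is a $1$-eigenvector of the Jacobi operator $\mathcal{J}_{\dot{\gamma}_v(t)}$, so $\mathcal{J}_{\dot{\gamma}_v(t)}$ acts as the identity on $\dot{\gamma}_v(t)^\perp$ for every $t$. Consequently, the normal Jacobi equation along $\gamma_v$ reduces to $J'' + J = 0$, and every initially vanishing normal Jacobi field has the form $\sin(t) P_t w$ for some $w \in v^\perp$. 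Such fields vanish precisely at $t \in \pi\Z$, so no $tv$ with $t \in (0,\pi)$ is a conjugate vector, while $\pi v$ is; that is, $\pi v \in \fc(p)$.

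For the converse $(\Leftarrow)$, suppose $\pi v \in \fc(p)$. Since $\pi v$ is a first conjugate vector, the Morse index theorem (as stated in Section \ref{conjsub}) implies that the index form $I_{\gamma_v}^{\pi}$ is positive semi-definite on $\mathcal{V}_{\gamma_v}^{\pi}$. Choose any parallel orthonormal framing $E_1(t), \ldots, E_{d-1}(t)$ of $\dot{\gamma}_v(t)^\perp$, and test the form on $X_i(t) := \sin(t) E_i(t) \in \mathcal{V}_{\gamma_v}^{\pi}$. A direct calculation, using $\|X_i'\|^2 = \cos^2(t)$ and $\sec(\dot{\gamma}_v,E_i)(t) \geq 1$, yields
\[
I_{\gamma_v}^{\pi}(X_i,X_i) = \int_0^\pi \cos^2(t) - \sec(\dot{\gamma}_v,E_i)(t)\sin^2(t)\,dt \;\leq\; \int_0^\pi \cos(2t)\,dt = 0.
\]
Combined with positive semi-definiteness this forces equality, so $X_i$ lies in the null space of $I_{\gamma_v}^{\pi}$ and is therefore a normal Jacobi field along $\gamma_v$ vanishing at $0$ and $\pi$.

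Substituting $X_i = \sin(t) E_i(t)$ into Jacobi's equation and using that $E_i$ is parallel gives $\mathcal{J}_{\dot{\gamma}_v(t)}(E_i(t)) = E_i(t)$ for all $t$, and hence $\sec(\dot{\gamma}_v(t), E_i(t)) \equiv 1$ by Lemma \ref{crit}. This shows each initial vector $E_i(0)$ lies in $D_v$, so $D_v = v^\perp$ and $v$ has rank $d-1$. The only delicate point in the argument is correctly exploiting the nonstrict curvature comparison: a single-sided bound $I \geq 0$ together with the upper bound $I(X_i,X_i) \leq 0$ forces equality; once equality is in hand, the standard fact that $X_i$ lies in the Jacobi null space does the rest, and I expect no serious obstacle.
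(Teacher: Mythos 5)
Your forward direction is correct and is essentially the paper's: rank $d-1$ makes $\mathcal{J}_{\dot\gamma_v(t)}$ the identity on $\dot\gamma_v(t)^{\perp}$, so all initially vanishing normal Jacobi fields are $\sin(t)P_tw$ and the first conjugate point occurs exactly at time $\pi$ (the paper compresses this into a citation of Lemma \ref{spherjab}). For the converse the paper argues the contrapositive: if $v$ has rank $d-2$, the Jacobi field $f(t)P_tw$ of Lemma \ref{maxjab} (with $w\in v^{\perp}\cap D_v^{\perp}$) vanishes strictly before $\pi$ by the equality case of Rauch. Your index-form argument is a legitimate variant of this: positive semi-definiteness of $I_{\gamma_v}^{\pi}$ from the Morse index theorem, the test fields $\sin(t)E_i(t)$, and the forced equality are all correct, and they amount to proving the Rauch rigidity by hand. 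Up to the conclusion $\sec(\dot\gamma_v(t),E_i(t))=1$ for $t\in[0,\pi]$ your argument is sound.

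The gap is the phrase ``for all $t$'' in your last paragraph. Membership of $X_i$ in the null space of $I_{\gamma_v}^{\pi}$ makes $X_i$ a Jacobi field only on the segment $[0,\pi]$; outside that segment nothing forces $\sin(t)E_i(t)$ to solve the Jacobi equation, so you obtain $\mathcal{J}_{\dot\gamma_v(t)}E_i(t)=E_i(t)$ only for $0\leq t\leq\pi$. But $E_i(0)\in D_v$, and hence rank $d-1$, requires $\sec(\dot\gamma_v(t),P_tE_i(0))=1$ for \emph{every} $t\in\R$ by the definition of $D_v$; a priori the curvature in the direction $P_tE_i(0)$ could exceed $1$ at some time outside $[0,\pi]$ without disturbing the fact that the first conjugate point along $\gamma_v$ occurs at time $\pi$. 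So as written you have proved the weaker statement that $\pi v\in\fc(p)$ forces $\mathcal{J}_{\dot\gamma_v(t)}=\Id$ on $\dot\gamma_v(t)^{\perp}$ for $0\leq t\leq\pi$ (which, incidentally, is all that the application in Corollary \ref{cartan} really uses, since Cartan's theorem only sees geodesics from $p$ of length at most $\pi$). To get the statement of the corollary you need an additional argument promoting curvature one on $[0,\pi]$ to curvature one on all of $\R$; note that the paper's own contrapositive is compressed at exactly this spot, since $w\perp D_v$ only guarantees a curvature-greater-than-one instant \emph{somewhere} on $\R$, not inside $[0,\pi]$ where the Rauch equality case applies. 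Your closing remark identifies the nonstrict comparison as the delicate point, but the genuinely delicate point is this domain-of-validity issue, which your write-up passes over.
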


\begin{proof}
If $v$ has rank $d-1$, then Lemma \ref{spherjab} implies that $\pi v \in \fc(p)$.  If $v$ has rank $d-2$ then there is an initially vanishing Jacobi field of the form described by Lemma \ref{maxjab}.  The function $f(t)$ vanishes strictly before $\pi$ by the equality case of the Rauch Comparison Theorem \cite[Chapter 11]{doca}.
\end{proof}

Recall that $\fc(p)$ denotes the locus of first conjugate vectors in $T_pM$.

\begin{corollary}\label{cartan}
If there exists $p \in M$ with $\fc(p)=S(0,\pi)$, then $M=\mathcal{I}$, i.e. $M$ has constant curvatures equal to one.
\end{corollary}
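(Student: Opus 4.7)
My plan is to show that the hypothesis forces $\exp_p$ to pull back the metric of $M$ to the standard round unit-sphere metric on the ball $B(0,\pi) \subset T_pM$; combined with Bonnet--Myers, this identifies $M$ as having constant curvature one.

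First, I would use Corollary \ref{rank} to translate the hypothesis $\fc(p) = S(0,\pi)$ into the pointwise statement that every $v \in S_pM$ has rank $d-1$, i.e.\ $D_v = v^{\perp}$. For each $v \in S_pM$ and each $w \in v^{\perp}$, Lemma \ref{spherjab} then produces the initially vanishing Jacobi field $J(t) = \sin(t) P_t w$ along $\gamma_v$. Since these fields exhaust the $(d-1)$-dimensional space of initially vanishing normal Jacobi fields along each such geodesic, (\ref{jabfield}) determines $d(\exp_p)_{tv}$ completely: it sends the radial direction $v$ to $\dot\gamma_v(t)$ and each tangential direction $w \in v^{\perp}$ (under the standard identification $T_{tv}(S(0,t)) \cong v^{\perp}$ via parallel transport) to $\sin(t) P_t w$. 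In polar coordinates on $B(0,\pi)\setminus\{0\}$ the pullback metric is therefore
\[ \exp_p^{*}\g \;=\; dt^2 + \sin^2(t)\,\langle\cdot,\cdot\rangle, \]
the round unit-sphere metric, which extends smoothly through the origin.

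Next, the absence of conjugate vectors on $B(0,\pi)$ makes $\exp_p$ a local diffeomorphism there, hence a local isometry from $B(0,\pi)$ equipped with the round sphere metric onto the image $M' := \exp_p(B(0,\pi))$. Thus $M'$ has constant sectional curvature one. Separately, $D_v = v^{\perp}$ combined with Lemma \ref{spherjab} gives $K_{\pi v} = T_{\pi v}(S(0,\pi))$ for every $v \in S_pM$, so $d\exp_p$ annihilates the entire tangent space to $S(0,\pi)$ along it; by connectedness of $S(0,\pi)$ (recall $d\geq 2$), $\exp_p(S(0,\pi)) = \{p'\}$ for some single point $p' \in M$.

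Finally, Bonnet--Myers applied to $\sec \geq 1$ gives $\diam(M) \leq \pi$, so $M = \exp_p(\overline{B(0,\pi)}) = M' \cup \{p'\}$. Since the open dense subset $M\setminus\{p'\} = M'$ has constant curvature one, continuity of the curvature tensor extends this to $p'$, yielding $M = \mathcal{I}$. The only real subtlety is in the first step: one must recognize that $D_v = v^{\perp}$ pins down the \emph{full} tangential behavior of $d(\exp_p)_{tv}$ via parallel transport --- not merely the magnitude that one would get from a Rauch-type comparison --- and this is precisely where the spherical-rank hypothesis pays off.
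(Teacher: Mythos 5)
Your proof is correct, and it reaches the conclusion by a more self-contained route than the paper. The paper's own proof is three lines: by Corollary \ref{rank} every $v\in S_pM$ has rank $d-1$, then Cartan's theorem on determination of the metric by curvature (cited from do Carmo) gives $U_p=M\setminus\cut(p)\subset\mathcal{I}$, and $M=\cl(U_p)\subset\mathcal{I}$ since $\mathcal{I}$ is closed. What you do differently is to reprove, by hand, exactly the special case of Cartan's theorem that is needed: rank $d-1$ means $D_v=v^{\perp}$, so Lemma \ref{spherjab} and uniqueness of Jacobi fields force every initially vanishing normal Jacobi field along a radial geodesic to be $\sin(t)P_tw$, whence $\exp_p^{*}\g=dt^2+\sin^2(t)\langle\cdot,\cdot\rangle$ and $\exp_p$ is a local isometry from the round ball $B(0,\pi)$ (no conjugate vectors there since $\fc(p)=S(0,\pi)$). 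You then replace the paper's appeal to density of $M\setminus\cut(p)$ by Bonnet--Myers plus the observation that $T_{\pi v}S(0,\pi)\subset K_{\pi v}$ collapses $S(0,\pi)$ to a point, so $M=\exp_p(\overline{B(0,\pi)})=M'\cup\{p'\}$, and you finish by continuity of the curvature. Both arguments hinge on the same inputs (Corollary \ref{rank}, Lemma \ref{spherjab}, equation (\ref{jabfield})); yours trades the textbook citation for an explicit computation at the cost of some extra global bookkeeping. One cosmetic imprecision: you write $M\setminus\{p'\}=M'$, which presumes $p'\notin M'$ without argument; this is harmless, since in either case $M'$ is open and dense in $M=M'\cup\{p'\}$ (a point has empty interior in a manifold of dimension $d\geq 3$), so the continuity step, or simply closedness of $\mathcal{I}$, completes the proof.
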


\begin{proof}
Let $U_p = M\setminus \cut(p)$. By Corollary \ref{rank}, all vectors in $S_pM$ have rank $d-1$.  By Cartan's theorem on determination of the metric \cite[Theorem 2.1, pg. 157]{doca}, $U_p \subset \mathcal{I}$.  Therefore, $M=\cl({U}_p)\subset \mathcal{I}.$  
\end{proof}

\begin{lemma}\label{cartan2}
If $v \in S_pM$ has rank $d-1$ and the restriction of $\exp_p$ to $S(0, \pi)$ is a point map, then $M=\mathcal{I}$.
\end{lemma}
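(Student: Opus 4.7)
The plan is to invoke Corollary \ref{cartan}, which reduces the lemma to showing $\fc(p) = S(0, \pi)$, equivalently (by Corollary \ref{rank}) that every $u \in S_pM$ has rank $d - 1$. Let $A := \{u \in S_pM : u \text{ has rank } d - 1\}$. Then $A$ is closed in $S_pM$ by Lemma \ref{closed} and Corollary \ref{rank}, and contains $v$ by hypothesis; since $S_pM$ is connected, it suffices to prove that $A$ is also open.

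Two consequences of the hypotheses drive the proof. First, the point-map hypothesis forces $d(\exp_p)_{\pi u}$ to annihilate the entire tangent sphere direction for every $u \in S_pM$; by Gauss's Lemma, $K_{\pi u} = u^\perp$, so the nullity of the index form $I^\pi_{\gamma_u}$ takes its maximum possible value $d - 1$ for every $u$. Second, the rank-$(d-1)$ hypothesis on $v$ together with Lemma \ref{crit} forces $\mathcal{J}_{\dot\gamma_v(t)} = \Id$ on $\dot\gamma_v(t)^\perp$ for every $t$. Consequently, the Dirichlet spectrum on $[0, \pi]$ of the self-adjoint operator $L_u := -\partial_t^2 - \mathcal{J}_{\dot\gamma_u(\cdot)}$ at $u = v$ is $\{n^2 - 1 : n \geq 1\} = \{0, 3, 8, \ldots\}$, each eigenvalue of multiplicity $d - 1$. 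The same description holds at any $u_0 \in A$.

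To prove $A$ is open, fix $u_0 \in A$. Since $(u, t) \mapsto \mathcal{J}_{\dot\gamma_u(t)}$ is continuous on $S_pM \times [0, \pi]$, the operator $L_u$ depends continuously on $u$ and so do its eigenvalues (arranged in increasing order). The spectrum of $L_{u_0}$ has its first $d - 1$ eigenvalues equal to $0$, with a spectral gap of size $3$ above them. Hence for $u \in S_pM$ in a sufficiently small neighborhood of $u_0$, the first $d - 1$ eigenvalues of $L_u$ lie in a small interval around $0$, while $\lambda_d(L_u)$ remains close to $3$ and in particular strictly positive. The maximal-nullity constraint then forces each of the $d - 1$ near-zero eigenvalues to equal $0$ exactly (otherwise the total count of zero eigenvalues would be less than $d - 1$), leaving no negative eigenvalues. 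So the index of $I^\pi_{\gamma_u}$ vanishes, $\gamma_u$ has no conjugate point in $(0, \pi)$, and $u \in A$. Equivalently in index-form language: if some $u \in A^c$ lay arbitrarily close to $u_0$, choosing $w \in u^\perp \setminus D_u$ would yield a test field $\tilde J_w(t) := \sin(t) P_t w$ with
\[ I^\pi_{\gamma_u}(\tilde J_w, \tilde J_w) = \|w\|^2 \int_0^\pi \bigl(\cos^2 t - \sin^2 t \cdot \sec(\dot\gamma_u(t), P_t w)\bigr) \, dt < 0, \]
producing index $\geq 1$ and hence (index $+$ nullity) $\geq d$, which contradicts the upper semicontinuity of that sum at $u_0$, where it equals $d - 1$.

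Thus $A$ is nonempty, open, and closed in the connected sphere $S_pM$, so $A = S_pM$. Consequently $\fc(p) = S(0, \pi)$, and Corollary \ref{cartan} yields $M = \mathcal{I}$. The main obstacle is the openness step, which hinges on combining the spectral gap above zero in $L_{u_0}$ with the maximality of the nullity hypothesis to preclude the emergence of negative eigenvalues under perturbation of $u$ within $S_pM$.
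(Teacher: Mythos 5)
Your proof is correct, and while it shares the paper's overall skeleton (reduce via Corollaries \ref{cartan} and \ref{rank} to showing that the set of rank-$(d-1)$ directions is nonempty, closed, and open in the connected sphere $S_pM$), the crucial openness step is handled by a genuinely different mechanism. The paper argues by contradiction with a limit of Jacobi fields: for nearby directions whose $\pi$-multiple is not in $\fc(p)$ it invokes Lemma \ref{maxjab} to produce fields $f_i(t)P_tw_i$ vanishing at $0$, at an interior time, and at $\pi$, uses the separation bound $\min\{s_i,\pi-s_i\}>\inj(M)/2$ to keep the interior zeros away from the endpoints, and extracts a limiting interior conjugate vector contradicting first conjugacy of the limit direction. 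You instead convert the point-map hypothesis into the statement that the Dirichlet nullity of $I^{\pi}_{\gamma_u}$ is maximal, equal to $d-1$, in \emph{every} direction, and combine this with the spectral gap $\{0,3,8,\dots\}$ of $-\partial_t^2-\mathcal{J}$ at a rank-$(d-1)$ direction and continuity of Dirichlet eigenvalues (justified, as in the proof of Lemma \ref{closed}, by a local parallel framing giving a continuously varying potential $R_u(t)$, plus min--max) to force the index to vanish nearby; your alternative phrasing via upper semicontinuity of index plus nullity, with the explicit negative test field $\sin(t)P_tw$ for $w\in u^{\perp}\setminus D_u$, is the same estimate in different clothing and is computed correctly (strict negativity because $\sec(\dot\gamma_u(t),P_tw)>1$ on an open set of times). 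What your route buys: it bypasses Lemma \ref{maxjab} and the injectivity-radius trick, and uses the point-map hypothesis at full strength; the cost is importing standard spectral facts (eigenvalue continuity, index and nullity as counts of negative and zero Dirichlet eigenvalues) that the paper's more elementary sequential argument does not need.
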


\begin{proof}
It suffices to prove $\fc(p)=S(0,\pi)$ by Corollary \ref{cartan}.  Let $X=\fc(p)\cap S(0,\pi)$. The vector $\pi v \in X$ by Corollary \ref{rank}; therefore $X$ is a nonempty subset of $S(0,\pi)$.  The subset $X$ is closed in $S(0,\pi)$ by Lemma \ref{closed}.  It remains to demonstrate that $X$ is an open subset of $S(0,\pi)$.

This fails only if there exists $x \in X$ and a sequence $x_i \in S(0, \pi)\setminus X$ converging to $x$.  As $\exp_p$ is a point map on $S(0, \pi)$ each $x_i$ is a conjugate vector.  As $x_i \notin \fc(p)$ there exists $s_i \in (0,1)$ such that $s_i x_i \in \fc(p)$.  By Lemma \ref{maxjab}, there exist Jacobi field $J_i(t)=f_i(t)P_t w_i$ along the geodesics $\gamma_{\mathbf{r}(x_i)}(t)$ with $f_i(0)=f_i(s_i)=f_i(\pi)=0$ for each index $i$.  Note that $\min\{s_i,\pi-s_i\}> \inj(M)/2$.  Therefore, $s_i x_i$ converge to a conjugate vector $s x$ with $0<s<1$, a contradiction.
\end{proof}

\begin{proposition}\label{constantrank}
$SM=\mathcal{D}$ or $M=\mathcal{I}$. 
\end{proposition}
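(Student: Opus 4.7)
The plan is to argue by contradiction, assuming $\mathcal{O}\neq\emptyset$ while some $v \in S_pM$ has rank $d-1$. First, $p$ must lie in $\mathcal{I}$: if instead $p \in \mathcal{O}$, Corollary \ref{point} gives that $\exp_p|_{S(0,\pi)}$ is a point map, and then Lemma \ref{cartan2} forces $M = \mathcal{I}$, contradicting $p \in \mathcal{O}$. Second, a direct consequence of Lemma \ref{obvious}(2) is that parallel transport along $\gamma_v$ maps $D_v$ into $D_{\dot\gamma_v(s)}$ for every $s$, so the rank is constant along geodesics. Applying the previous observation at each $\gamma_v(t)$ then forces $\gamma_v(t) \in \mathcal{I}$ for every $t$, so $\gamma_v \subset \mathcal{I}$.

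The remaining task is to rule out $v$ of rank $d-1$ based at $p \in \mathcal{I}$ while $\mathcal{O}\neq\emptyset$. My strategy is to show that $\exp_p|_{S(0,\pi)}$ is still a point map; Lemma \ref{cartan2} applied at $p$ will then give $M = \mathcal{I}$, contradicting $\mathcal{O}\neq\emptyset$. To conclude the point-map property I plan to invoke Lemma \ref{constant} for the distribution $D$ on $S_pM$: by the rank hypothesis $D$ has codimension at most one, and by Lemma \ref{spherjab} the map $w \mapsto \exp_p(\pi w)$ is constant along curves tangent to $D$. It remains to verify that $D$ is totally geodesic on $S_pM$. At nonisotropic points this is immediate from $D = E$ (Lemma \ref{same}) together with Lemma \ref{totgeo}; at $p \in \mathcal{I}$, however, $E_v = v^\perp$ trivially while $D_v$ may be a proper subspace, so a fresh argument is required.

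The total geodesicity of $D$ on $S_pM$ at isotropic points is the main obstacle. My plan is a Jacobi-field analysis exploiting the rigidity imposed on $\gamma_v$ by rank $d-1$: the Jacobi operator $\mathcal{J}_{\dot\gamma_v}$ equals the identity on $\dot\gamma_v^\perp$ throughout $\gamma_v$, so every normal Jacobi field along $\gamma_v$ takes the sinusoidal form $\cos(t)P_t a + \sin(t)P_t b$. Varying the initial direction through the great-circle $c(s) = \cos(s)v + \sin(s)w$ in $S_pM$ with $w \in D_v$ and comparing the resulting Jacobi fields along the nearby geodesics $\gamma_{c(s)}$ to these sinusoids along $\gamma_v$, I would show that the curvature-one condition persists along $c$, giving $\dot c(s) \in D_{c(s)}$. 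Should this direct route resist, an alternative plan is to mimic the open/closed argument in the proof of Lemma \ref{cartan2}: rank-$(d-1)$ boundary vectors $v_0$ approached by rank-$(d-2)$ vectors $v_n$ would yield first-conjugate vectors $\tau_n v_n \in \fc(p)$ with $\tau_n \nearrow \pi$, and the injectivity-radius separation of successive conjugate points combined with closedness of $\fc(p)$ (Lemma \ref{closed}) should contradict $\pi v_0 \in \fc(p)$.
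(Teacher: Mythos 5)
Your opening reduction is sound and agrees with the paper: Corollary \ref{point} and Lemma \ref{cartan2} force every vector with nonisotropic footpoint to have rank $d-2$, so a putative rank-$(d-1)$ vector $v$ must sit over $\mathcal{I}$, and since rank is constant along geodesics the whole of $\gamma_v$ lies in $\mathcal{I}$. The gap is that the remaining step --- the heart of the proposition --- is only a plan, and neither of your two routes works as described. Route one asks for total geodesicity of $D$ on $S_pM$ at an isotropic point $p$ (or at least tangency of $D$ along every great circle through $v$). But $\dot c(s)\in D_{c(s)}$ is a condition on $\sec(\dot\gamma_{c(s)}(t),P_t\dot c(s))$ for \emph{all} $t\in\R$, i.e.\ on the geometry along geodesics that leave any neighborhood of $\gamma_v$, whereas the rank-$(d-1)$ rigidity of $\gamma_v$ (all normal Jacobi fields sinusoidal) only controls the metric infinitesimally along $\gamma_v$; it gives no control of curvatures along $\gamma_{c(s)}$ for $s\neq 0$ and large $t$. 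The paper never proves, needs, or has available such a statement at isotropic points --- it is essentially equivalent to the conclusion being sought. Route two also fails: in Lemma \ref{cartan2} the separation argument works because the point-map hypothesis forces $f_i(\pi)=0$ in addition to the vanishing at the first conjugate time, so the Jacobi field $f_iP_tw_i$ vanishes twice before or at $\pi$, producing two mutually conjugate points along the same geodesic and hence a uniform gap bounded below by the conjugate/injectivity radius. In your situation you only know $\tau_nv_n\in\fc(p)$ with $\tau_n\to\pi$ and $\pi v_0\in\fc(p)$; that configuration is entirely consistent, and closedness of $\fc(p)$ (Lemma \ref{closed}) merely confirms the limit lies in $\fc(p)$. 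No contradiction appears.

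The missing idea is the paper's device for importing the point-map property to the bad footpoint rather than proving it there directly. Since each approximating rank-$(d-2)$ vector $v_n$ has a geodesic that must meet $\mathcal{O}$ (otherwise its rank would be $d-1$), and Lemma \ref{closed1} makes these geodesics closed with period $2\pi$, one may replace $v_n$ by $w_n=\dot\gamma_{v_n}(t_n)$ with $t_n\in[0,2\pi]$ and footpoint $q_n\in\mathcal{O}$. After passing to a subsequence, $w_n\to w$, a rank-$(d-1)$ vector with footpoint $q=\lim q_n$. Corollary \ref{point} says each $\exp_{q_n}$ is a point map on $S(0,\pi)\subset T_{q_n}M$, and continuity of $\exp:TM\to M$ passes this to $\exp_q$; Lemma \ref{cartan2} applied to $w$ at $q$ then gives $M=\mathcal{I}$, the desired contradiction. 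Without this limiting step (or some substitute for obtaining the point-map property at a footpoint carrying a rank-$(d-1)$ vector), your argument does not close.
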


\begin{proof}
Assume that $\mathcal{I}$ is a proper subset of $M$, or equivalently, that $\mathcal{O} \neq \emptyset$.  Corollary \ref{point} and Lemma \ref{cartan2} imply $\mathcal{D}_p=S_pM $ for each $p \in \mathcal{O}$. Therefore $\mathcal{D} \neq \emptyset$.  As $\mathcal{D}$ is an open subset of the connected $SM$, it remains to prove that $\mathcal{D}$ is a closed subset of $SM$.

This fails only if there exists a sequence of rank $d-2$ vectors $v_i \in \mathcal{D}$ with $v_i$ converging to a vector $v \in SM$ of rank $d-1$.  Lemma \ref{closed1} implies each of the geodesics $\gamma_{v_i}$ is closed and has $2\pi$ as a period; therefore, $\gamma_v$ is a closed geodesic having $2\pi$ as a period.  Let $p_i \in M$ denote the footpoint of each $v_i$ and $p \in M$ the footpoint of $v$.  As the rank of $v_i$ is $d-2$, the geodesic $\gamma_{v_i}$ enters $\mathcal{O}$ at some time $t_i$.  Replace $v_i$ with $w_i=\dot{\gamma}_{v_i}(t_i)$. After possibly passing to a subsequence, the sequence of rank $n-2$ vectors $w_i$ with footpoints $q_i \in \mathcal{O}$ converge to a rank $d-1$ vector $w$ with footpoint $q$.

Continuity of $\exp:TM \rightarrow M$ and Lemma \ref{point} imply that $\exp_q$ restricts to a point map on the tangent sphere  $S(0,\pi) \subset T_qM$.  Lemma \ref{cartan2} implies $M=\mathcal{I}$, a contradiction. 
\end{proof}

\medskip

\noindent \textit{Proof of Theorem \ref{thm:A}}:
Seeking a contradiction, assume that $M \neq \mathcal{I}$.  Then  $SM=\mathcal{D}$ by Proposition \ref{constantrank}.  For $p \in M$, the spherical distribution $D$ is a nonsingular codimension one tangent distribution on $S_pM$, an \textit{even dimensional} sphere since $M$ is odd dimensional.  This distribution is continuous by Lemma \ref{cont}, a contradiction. 
\hfill $\Box$






\subsection{Adapted Frames}\label{sec:adapt}
This subsection consists of preliminary results that will culminate in the proof of Theorem \ref{thm:B} in the next subsection.  If $M$ does not have constant curvatures equal to one, then Theorem \ref{thm:A} implies $d=\dim(M)$ is even and Proposition \ref{constantrank} implies every tangent vector has rank $d-2$ ($SM=\mathcal{D}$). These are \textit{standing assumptions} on $M$ throughout this subsection.  The main result is the following proposition; its proof appears at the end of this subsection.

\begin{proposition}\label{noisotropic}
If $M$ does not have constant curvatures equal to one, then $M$ has no isotropic points ($M=\mathcal{O}$).
\end{proposition}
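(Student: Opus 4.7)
Suppose for contradiction that $\mathcal{I}$ is nonempty. Since the standing assumption $M \neq \mathcal{I}$ makes $\mathcal{O}$ open and nonempty and $M$ is connected, the boundary $\mathcal{I} \cap \overline{\mathcal{O}}$ is nonempty; I fix a point $p$ in this boundary together with a sequence $p_n \in \mathcal{O}$ converging to $p$. The strategy is to transfer, by continuity, the rigid Hangan--Lutz algebraic structure of the spherical distribution $D$ at the $p_n$ to $p$, and then to contradict this structure using the complete isotropy of the curvature tensor at $p$.

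The first step identifies the structure of $D$ on $S_pM$. By Proposition~\ref{constantrank}, $D$ is nonsingular of codimension one on every $S_qM$. For $q \in \mathcal{O}$, Lemma~\ref{same} gives $D = E$, so $D$ is totally geodesic on $S_qM$ by Lemma~\ref{totgeo}. I claim total geodesic-ness passes to $p$: given $v \in S_pM$ and $w \in D_v$ with $|w| = 1$, approximate $(v, w)$ by pairs $(v_n, w_n)$ with $v_n \in S_{p_n}M$ and $w_n \in D_{v_n}$, using continuity of $D$ over the constant-rank locus $SM = \mathcal{D}$ (Lemma~\ref{cont} together with Remark~\ref{smoothremark}). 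The great circles $c_n(s) = \cos(s)v_n + \sin(s)w_n$ in $S_{p_n}M$ are everywhere tangent to $D$, so their limit $c(s) = \cos(s)v + \sin(s)w$ in $S_pM$ is too. Theorem~\ref{Amap} then produces a skew-symmetric class $[A_p]$ of full rank with $D_v = \spn\{v, A_p v\}^{\perp}$, and Corollary~\ref{contact} shows that $D$ is a contact distribution on $S_pM$.

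The contradiction comes from a curvature computation in adapted frames of the kind developed in Section~\ref{sec:adapt}. Because $p \in \mathcal{I}$, the curvature tensor $R_p$ is the round one and $\mathcal{J}_v = \Id|_{v^{\perp}}$ for every $v \in S_pM$. Since $p \in \partial \mathcal{O}$, I can pick $v \in S_pM$ so that $\gamma_v(t) \in \mathcal{O}$ for arbitrarily small $t > 0$. For a unit vector $a \in v^{\perp} \cap D_v^{\perp}$ (proportional to $A_p v$), the condition $a \notin D_v$ combined with Lemma~\ref{maxjab} forces $\sec(P_t a, \dot{\gamma}_v(t)) > 1$ for arbitrarily small $t > 0$. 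The adapted-frame identities expand $\mathcal{J}_{\dot{\gamma}_v(t)}$ to second order in $t$ in terms of $\nabla R$ and $\nabla^2 R$ at $p$; combining these expansions with the parallel-transport covariance $D_{\dot{\gamma}_v(t)} = P_t D_v = \spn\{\dot{\gamma}_v(t), P_t(A_p v)\}^{\perp}$ and with the nondegeneracy of $A_p$ should produce an algebraic relation inconsistent with $R_p$ being round.

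The principal obstacle is precisely this algebraic clash. One must pin down which components of $\nabla R|_p$ and $\nabla^2 R|_p$ appear and verify that the constraints imposed by (i)~$R_p$ being the round tensor, (ii)~$D$ being contact with Hangan--Lutz class $[A_p]$ of full rank, and (iii)~$D$ being parallel along geodesics, cannot be simultaneously satisfied. In effect, one reads off the second fundamental form of the codimension-one distribution $D \subset T S_p M$ from the adapted-frame curvature identities and shows it cannot both vanish (as forced by full isotropy at $p$) and pair nondegenerately with $A_p$ (as required for $D$ to be contact).
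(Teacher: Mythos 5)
Your first step (transferring total geodesy of $D$ to the isotropic point by taking limits over a sequence in $\mathcal{O}$, then invoking Theorem~\ref{Amap} and Corollary~\ref{contact}) is sound and mirrors a step in the paper's argument. But the second half, where the contradiction is supposed to come from, is not a proof: you explicitly defer the "algebraic clash" between the jets of $R$ at $p$ and the contact structure, and it is precisely this clash that is never established. Worse, the mechanism you conjecture is doubtful. Isotropy of $R_p$ is a pointwise condition, while $D$ on $S_pM$ is defined through the curvature along \emph{entire} geodesics; nothing forces any "second fundamental form" of $D$, viewed as a distribution on the round sphere $S_pM$, to vanish at an isotropic point, so there is no pointwise conflict between $R_p$ being round and $D$ being contact. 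Indeed, if you run the adapted-frame identity (\ref{curv1}) with base point $p$ and let $t\to 0$, the quantity $\cos(t)f(t)-\sin(t)f'(t)$ is $O(t^3)$ with leading coefficient proportional to $\lambda(v)-1$, which is zero at an isotropic point — the lowest-order term gives no contradiction, and whether higher jets of $R$ do is exactly what you leave unverified. (Two smaller errors: $p\in\partial\mathcal{O}$ does not let you choose $v$ with $\gamma_v(t)\in\mathcal{O}$ for arbitrarily small $t>0$; and $a\notin D_v$ only gives $\sec(P_{t_0}a,\dot\gamma_v(t_0))>1$ for \emph{some} $t_0$, not for arbitrarily small $t_0$.)

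The paper's proof avoids jet expansions entirely by a different device: it works not at $p$ but at the \emph{first conjugate point} $q=\gamma_v(s)$, $s<\pi$, which exists because $v$ has rank $d-2$ (Lemma~\ref{maxjab} and Corollary~\ref{rank}). Running the adapted frame along $\gamma_w$ from $q$ back to $p$ (with $w=-\dot\gamma_v(s)$), the limit of the totally geodesic distributions from nearby nonisotropic points gives the contact property $a_{12}^{d-1}\neq 0$ near $w$, and formula (\ref{curv1}) reads $R(E_1,E_2,E_0,E_{d-1})(t)=\frac{a_{12}^{d-1}(w)}{\sin^3 t}\bigl(\cos(t)f(t)-\sin(t)f'(t)\bigr)$. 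Because $p\in\mathcal{I}$, this curvature component tends to $0$ as $t\to s$, forcing $\cos(s)f(s)-\sin(s)f'(s)=0$; because $p$ is conjugate to $q$ along $\gamma_w$ (symmetry of first conjugacy) and only $J_{d-1}=fE_{d-1}$ can vanish before $\pi$, also $f(s)=0$, whence $f'(s)=0$ since $\sin(s)\neq 0$ — contradicting that $J_{d-1}$ is a nonzero Jacobi field. This then shows a neighborhood of $q$, hence of $p$, lies in $\mathcal{I}$, and connectedness finishes. The essential missing idea in your proposal is this use of the conjugate point to convert the isotropy of $R$ at $p$ into the double vanishing $f(s)=f'(s)=0$ of a Jacobi field; without it, the contradiction you hope for has not been exhibited.
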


\begin{lemma}\label{lowrank}
For each $p \in M$, the spherical distribution $D$ is a smooth tangent distribution on $S_pM$.
\end{lemma}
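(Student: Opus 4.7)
The plan is to reduce the problem, for an arbitrary basepoint $p \in M$, to the case of a nonisotropic basepoint, and then pull the answer back by parallel transport along a geodesic. The reduction is driven by one key observation: directly from the definition of $D$, the spherical distribution is invariant under parallel transport along geodesics, so that $D_{\dot{\gamma}_v(t)} = P_t(D_v)$ for every $t \in \mathbb{R}$ and every $v \in SM$.

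First I would dispatch the case $p \in \mathcal{O}$. By Lemma \ref{same} the spherical and eigenspace distributions coincide on $S_pM$, and the standing assumption $SM = \mathcal{D}$ forces $\dim E_v = \dim D_v = d-2$ at every $v \in S_pM$. Since $D_v \subseteq E_v$ holds on all of $SM$, this value $d-2$ is minimal among $\dim E_v$, $v \in SM$; hence $E$ attains its minimal rank (equal to $d-2$) throughout $\pi^{-1}(\mathcal{O}) \subset SM$, where $\pi : SM \to M$ is the footpoint projection. Remark \ref{smoothremark} (an application of Lemma \ref{chi}) then delivers smoothness of $E = D$ on $\pi^{-1}(\mathcal{O})$, and in particular on each $S_pM$ with $p \in \mathcal{O}$.

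For a general $p \in M$, fix $v_0 \in S_pM$ and choose $t_0 \in \mathbb{R}$ with $\gamma_{v_0}(t_0) \in \mathcal{O}$. Such a $t_0$ exists: otherwise $\gamma_{v_0}(\mathbb{R}) \subset \mathcal{I}$, so $E_{\dot{\gamma}_{v_0}(t)} = \dot{\gamma}_{v_0}(t)^{\perp}$ for all $t$, whence $D_{v_0} = v_0^{\perp}$ has dimension $d-1$, contradicting $v_0 \in \mathcal{D}$. By openness of $\mathcal{O}$ and smoothness of the geodesic flow, $v \mapsto \dot{\gamma}_v(t_0)$ is a smooth map from some neighborhood $U$ of $v_0$ in $S_pM$ into the open set $\pi^{-1}(\mathcal{O})$. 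Combining this with the smooth dependence of parallel transport on initial data, together with the identity $D_v = P_{-t_0}(D_{\dot{\gamma}_v(t_0)})$ and the smoothness established in the previous paragraph at the target, yields smoothness of $v \mapsto D_v$ on $U$; since $v_0$ was arbitrary, $D$ is smooth on $S_pM$.

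The only delicate step, I expect, is ruling out a complete geodesic through a possibly-interior isotropic point that never reaches $\mathcal{O}$. That step is clean precisely because every unit vector has rank \emph{exactly} $d-2$ rather than $d-1$, so the dimension count in the previous paragraph applies immediately; everything past it is a routine combination of standard smoothness facts together with the parallel-invariance of $D$.
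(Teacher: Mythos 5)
Your proof is correct and follows essentially the same route as the paper: since every vector has rank $d-2$, each geodesic must meet the open set $\mathcal{O}$ at some time $t_0$, and for nearby initial directions one invokes Lemma \ref{same} and Remark \ref{smoothremark} at the vectors $\dot{\gamma}_v(t_0)$ and pulls the smooth eigenspace distribution back to $S_pM$ by parallel transport. The paper carries this out on a small metric ball $B\subset S_pM$ rather than pointwise, and your explicit check that unit vectors with nonisotropic footpoints realize the minimal dimension of $E$ (so that Remark \ref{smoothremark} applies) is a detail the paper leaves implicit.
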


\begin{proof}
It suffices to prove smoothness of $D$ on a metric ball $B$ contained in the tangent sphere $S_pM$.  As the center $b_0$ of  $B$ is a rank $d-2$ vector, there exists a unit vector $w\in b_{0}^{\perp}$ and a $t_0>0$ such that $\sec(\dot{\gamma}_{b_0}(t_0),P_{t_0} w)> 1$. Therefore $\gamma_{b_0}(t_0) \in \mathcal{O}$, and since $\mathcal{O}$ is open,  $\gamma_{b}(t_0) \in \mathcal{O}$ for all $b \in B$ after possibly reducing the radius of $B$.

Lemma \ref{same} implies $D_{\dot{\gamma}_b(t_0)}=E_{\dot{\gamma}_b(t_0)}$ for each $b \in B$.  The unit tangent vectors $\dot{\gamma}_b(t_0)$ vary smoothly with $b \in B$. Remark \ref{smoothremark} implies $D_{\dot{\gamma}_b(t_0)}$ varies smoothly with $b\in B$.  The lemma follows since $D_b$ is obtained by parallel translating along $\gamma_b$ for time $t_0$ the subspace $D_{\dot{\gamma}_b(t_0)}$ to $T_b (S_pM)$.
\end{proof}

The proof of Proposition \ref{noisotropic} is based on a curvature calculation in special framings along geodesics.  To introduce these framings, let $p \in M$, $v \in S_pM$, and let $\{e_1,\ldots,e_{d-1}\} \subset T_v (S_pM)$ be an orthonormal basis with $e_1,\ldots,e_{d-2} \in D_v$. Define $E_0(t)=P_t v=\dot{\gamma}_v(t)$ for $t>0$ and $E_i(t)=P_t e_i$ for $i \in \{1,\ldots,d-1\}$ and $t>0$.

\begin{definition}
The parallel orthonormal framing $\{E_0(t),\ldots,E_{d-1}(t)\}$ along the ray $\gamma_v:[0,\infty)\rightarrow M$ is an \textit{adapted framing}.
\end{definition}

The following describes curvature calculations in polar coordinates using adapted framings.

Suppose that $B \subset S_pM$ is a metric ball of radius less than $\pi$.  Then $TB$ is trivial and the restriction of the spherical distribution $D$ to $B$ is trivial.  By Lemma \ref{lowrank}, there are \textit{smooth} unit length vector fields $e_1,\ldots,e_{d-2}$ on $B$ tangent to $D$.  An orientation on $S_pM$ determines a positively oriented orthonormal framing $\{e_1,\ldots,e_{d-1}\}$ of $B$.  For each $b \in B$, let $\{E_0(t),\ldots,E_{d-1}(t)\}$ be the associated adapted framing along the ray $\gamma_b$.  

Now fix $v \in B$.  For $T>0$ such that $Tv$ is not a conjugate vector, $\exp_p$ carries a neighborhood $U$ of $Tv$ in $T_pM$  diffeomorphically onto a neighborhood $V$ of $\exp_p(Tv)$ in $M$.  After possibly reducing the radius of $B$, the radial retraction of $U$ to the unit sphere $\textbf{r}(U)$ coincides with $B$.  The collection of adapted framings along geodesic rays with initial tangent vectors in $B$ restrict to an orthonormal framing $\{E_0,\ldots,E_{d-1}\}$ of the open set $V$ in $M$.   To calculate the Christoffel symbols in this framing, first define $a_{ij}^k:B \rightarrow \R$ by
\begin{equation}\label{structure}
[e_i,e_j]=\sum_{k=1}^{d-1}a_{ij}^k e_k.
\end{equation} 
As $Tv$ is not a conjugate vector, the geodesic spheres $S(p,t)$ with center $p$ and radius $t$ close to $T$ intersect the neighborhood $V$ in smooth codimension one submanifolds. The vector fields $E_1(t),\ldots,E_{d-1}(t)$ are tangent to the distance sphere $S(p,t)$ in $V$ and have outward pointing unit normal vector field $E_0(t)$.  In what follows, $g':=E_{0}(g)$ denotes the radial derivative of a function $g$.

For each unit speed geodesic $\gamma(t)$ with initial velocity vector in $B$, let $J_i(t)$ denote the Jacobi field along $\gamma$ with initial conditions $J_i(0)=0$ and $J_{i}'(0)=e_i\in T_{\dot{\gamma}(0)}(S_pM)$.  Lemmas \ref{spherjab} and \ref{maxjab} imply 
\begin{eqnarray}\label{again}
&J_i(t)=\sin(t)E_i(t),& i\in\{1,\ldots, d-2\},\nonumber\\ \\
&J_{d-1}(t)=f(t)E_{d-1}(t),&\nonumber
\end{eqnarray}
where $f(t)$ is the solution of the ODE 
\begin{equation}\nonumber
f''+\sec(E_{0},E_{d-1})f=0, \quad \text{ with }\quad f(0)=0, \, f'(0)=1.
\end{equation} 
For $t$ close to $T$, define $F_t: B \rightarrow M$ by $F_t(b)=\exp_p(tb)$.  The chain rule and (\ref{jabfield}) imply
\begin{equation}\label{related}
 \dd F_t(e_i)=J_i(t)
\end{equation}  for $i \in \{1,2, \ldots, d-1\}$.  Use (\ref{structure}), (\ref{related}), and the fact that the Jacobi fields $J_i$ are invariant under the radial (geodesic) flow generated by $E_0$ to deduce
\begin{align}\label{brackets1}
&[J_i,J_j]=\sum_{k=1}^{d-1} a_{ij}^k J_k,&  
&\mathcal{L}_{E_0} J_i=[E_0,J_i]=0.&      
\end{align}
Use (\ref{again}) and (\ref{brackets1}) to calculate that for $i,j \in \{1,\ldots, d-2\}$:
\begin{align}\label{brack}
&[E_0,E_i] =-\cot E_i&\nonumber\\\nonumber \\
&[E_0,E_{d-1}]=\tfrac{-f'}{f}E_{d-1}\nonumber&\\\nonumber \\
&[E_i,E_j]=\sum_{k=1}^{d-2} \tfrac{a_{ij}^k}{\sin}E_k+\tfrac{a_{ij}^{d-1}f}{\sin^2}E_{d-1}&\\\nonumber \\
&[E_i,E_{d-1}] = \sum_{k=1}^{d-2} \tfrac{a_{i\,d-1}^k}{f}E_k+(\tfrac{a_{i\, d-1}^{d-1}}{\sin}-\tfrac{E_i(f)}{f})E_{d-1}.&\nonumber
\end{align}
\begin{lemma}\label{christoffel1}
Let $i,j \in \{1,\ldots, d-2\}$.  The orthonormal framing $\{E_0,\ldots, E_{d-1}\}$ has Christoffel symbols given by $\nabla_{E_0} E_k=0$ for each $k \in \{0,\ldots,d-2\}$ and:
\begin{align*}
&\nabla_{E_i} E_0=\cot E_i&\\ \\ 
&\nabla_{E_i} E_j=-\cot \delta_{i}^{j} E_0+\sum_{k=1}^{d-2}\tfrac{a_{ij}^{k}-a_{jk}^{i}+a_{ki}^{j}}{2\sin}E_k-\tfrac{1}{2}\{\tfrac{a_{i\, d-1}^{j}+a_{j\, d-1}^{i}}{f}+\tfrac{a_{ji}^{d-1}f}{\sin^2}\}E_{d-1}&\\ \\
&\nabla_{E_i} E_{d-1}=\sum_{k=1}^{d-2} \tfrac{1}{2}\{\tfrac{a_{i\, d-1}^{k}+a_{k\, d-1}^{i}}{f}+\tfrac{a_{ki}^{d-1}f}{\sin^2}\}E_k&\\ \\
&\nabla_{E_{d-1}} E_0= \tfrac{f'}{f}E_{d-1}&\\ \\
&\nabla_{E_{d-1}} E_j=\sum_{k=1}^{d-2} \tfrac{1}{2}\{\tfrac{a_{k\, d-1}^{j}-a_{j\, d-1}^{k}}{f}-\tfrac{a_{jk}^{d-1} f}{\sin^2}\}E_k+(\tfrac{E_j(f)}{f}-\tfrac{a_{j\, d-1}^{d-1}}{\sin})E_{d-1}&\\ \\
&\nabla_{E_{d-1}} E_{d-1}=-\tfrac{f'}{f}E_0+\sum_{k=1}^{d-2} (\tfrac{a_{k\, d-1}^{d-1}}{\sin}-\tfrac{E_k(f)}{f})E_k& 
\end{align*}

where $f(t)$ is the solution of the ODE
\begin{equation*}
f''+\sec(E_0,E_{d-1})f=0, \quad \text{ with }\quad f(0)=0, \, f'(0)=1.
\end{equation*}
\end{lemma}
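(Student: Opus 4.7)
The proof amounts to a systematic application of Koszul's formula (\ref{koszul}) to the orthonormal framing $\{E_0, \ldots, E_{d-1}\}$, feeding in the bracket relations (\ref{brack}) already computed above. Because the framing is orthonormal, the inner products $\g(E_i, E_j) = \delta_{ij}$ are constant, so the first three terms of (\ref{koszul}) vanish when $X$, $Y$, $Z$ are frame vectors, and Koszul's formula reduces to the purely algebraic identity
\begin{equation*}
2\g(\nabla_{E_i} E_j, E_k) = \g([E_i, E_j], E_k) - \g([E_j, E_k], E_i) + \g([E_k, E_i], E_j).
\end{equation*}

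The identities $\nabla_{E_0} E_k = 0$ for each $k \in \{0, \ldots, d-1\}$ are immediate from the construction: each $E_k(t)$ is by definition the parallel transport of $e_k$ along the radial unit-speed geodesic whose velocity is $E_0$, so no further calculation is required. For the remaining cases, the plan is to plug the brackets from (\ref{brack}) into the displayed Koszul identity and read off coefficients one at a time. For instance, to compute $\nabla_{E_i} E_0$ with $i \in \{1, \ldots, d-2\}$, the nonzero contributions all come from $[E_i, E_0] = \cot E_i$ and yield $\nabla_{E_i} E_0 = \cot E_i$; the analogous calculation using $[E_{d-1}, E_0] = \tfrac{f'}{f} E_{d-1}$ gives $\nabla_{E_{d-1}} E_0 = \tfrac{f'}{f} E_{d-1}$. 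The $E_0$-components of $\nabla_{E_i} E_j$ and $\nabla_{E_{d-1}} E_{d-1}$ are most efficiently obtained by differentiating $\g(E_j, E_0) = 0$ or $\g(E_{d-1}, E_{d-1}) = 1$ and substituting the two formulas just derived, producing the $-\cot \delta_i^j E_0$ and $-\tfrac{f'}{f} E_0$ terms.

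The main bookkeeping obstacles are: (a) consistent use of the antisymmetry $a_{ij}^k = -a_{ji}^k$ of the structure functions from (\ref{structure}), which is what assembles the cyclic combination $a_{ij}^k - a_{jk}^i + a_{ki}^j$ appearing in the tangential part of $\nabla_{E_i} E_j$; (b) the asymmetry between ordinary indices in $\{1, \ldots, d-2\}$ (whose Jacobi fields are $\sin(t) E_i$) and the exceptional index $d-1$ (whose Jacobi field is $f(t) E_{d-1}$), which generates the mixed $\tfrac{f}{\sin^2}$ and $\tfrac{1}{f}$ factors in the off-diagonal terms; and (c) tracking the $E_j(f)/f$ contributions, which arise only from brackets involving $E_{d-1}$ because $f$ varies with the initial direction $b \in B$, not only with $t$. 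Once these are in hand each listed formula follows by direct substitution into the reduced Koszul identity, and the ODE for $f$ is just the Jacobi equation already recorded in Lemma \ref{maxjab}. The hardest part is keeping the three effects in (a)--(c) straight simultaneously; the arithmetic itself is routine.
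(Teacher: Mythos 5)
Your proposal is correct and is exactly the paper's argument: the paper's proof is the one-line "Calculate using (\ref{koszul}) and (\ref{brack})," and your reduction of Koszul's formula to the bracket terms for an orthonormal frame, together with substituting the brackets from (\ref{brack}), is that same calculation (your spot-checks, e.g. $\nabla_{E_i}E_0=\cot E_i$ and the $E_0$-components via differentiating constant inner products, all come out right). The only cosmetic remark is that the cyclic combination $a_{ij}^k-a_{jk}^i+a_{ki}^j$ is produced directly by the three bracket terms of Koszul's formula, with the antisymmetry $a_{ij}^k=-a_{ji}^k$ needed only to put the $E_{d-1}$-coefficients in the stated form.
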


\begin{proof}
Calculate using (\ref{koszul}) and (\ref{brack}). 
\end{proof}
Use Lemma \ref{christoffel1} to derive the curvature components:  For $i,j \in \{1, \ldots, d-2\}$, 
\begin{equation}\label{curv1}
R(E_i,E_j,E_0,E_{d-1})=\frac{-(a_{ij}^{d-1} f \csc)'}{\sin}
\end{equation}
\begin{equation}\label{curv2}
R(E_{d-1},E_0,E_j,E_{d-1})=\frac{(a_{j\, d-1}^{d-1}f \csc-E_j(f))'}{f}.
\end{equation}
\medskip

\noindent \textit{Proof of Proposition \ref{noisotropic}}:
The goal is to prove $\mathcal{I}=M$ or $\mathcal{I}=\emptyset$.  The set of isotropic points $\mathcal{I}$ is closed in $M$ and $M$ is connected.  It suffices to prove that $\mathcal{I}$ is open in $M$.  Let $p \in \mathcal{I}$ and $v \in S_pM$.  As $v$ has rank $d-2$, there exists a positive $s<\pi$ such that $q:=\gamma_v(s)$ is the first conjugate point to $p$ along the geodesic $\gamma_v(t)$.

\textit{Claim}: $\mathcal{I}$ contains an open neighborhood of $q$ in $M$.  

Assuming the claim holds, $\mathcal{I}$ contains an open neighborhood of the point $p$ in $M$ since the property of being a first conjugate point along a geodesic is symmetric.  Hence $\mathcal{I}$ is open in $M$.

\textit{Proof of Claim}: Let $w=-\dot{\gamma}_v(s)$ and note that $p=\gamma_w(s)$.  Let $B$ be a small metric ball in $S_qM$ containing $w$ and trivialize the tangent bundle of $B$  with orthonormal vector fields $\{e_1,\ldots, e_{d-1}\}$ with $e_i(b) \in D_b$ for each $b \in B$ and $i \in \{1,\ldots, d-2\}$. Consider the induced adapted framings $\{E_0, \ldots, E_{d-1}\}$ along geodesics with initial velocity vectors in $B$.

If $q$ is not contained in an open neighborhood of isotropic points, then there exists a sequence $q_i \in \mathcal{O}$ converging to $q$.  As all vectors have rank $d-2$ the spherical distributions on $S_{q_i}M$ converge to the spherical distribution on $S_qM$.  

As $q_i \in \mathcal{O}$, Lemma \ref{same} implies that the spherical distribution on each $S_{q_i}M$ is totally geodesic.  Therefore, the limiting spherical distribution on $S_qM$ is totally geodesic.  By Corollary \ref{contact}, the limiting distribution on $S_qM$ is a contact distribution.  In particular, the function
\begin{equation}\nonumber
a_{12}^{d-1}=\langle [e_1,e_2], e_{d-1} \rangle
\end{equation} is nonzero on $B$.  Use (\ref{curv1}) to calculate 
\begin{equation}\label{conjeqn}
R(E_1,E_2,E_0,E_{d-1})(t)=\tfrac{a_{12}^{d-1}(w)}{\sin^3(t)}(\cos(t)f(t)-\sin(t)f'(t))
\end{equation} for $t \in (0,s)$ along $\gamma_w(t)$.  

As $p \in \mathcal{I}$, the curvature tensor vanishes on orthonormal $4$-frames at the point $p.$  Therefore as $t$ converges to $s$, the left hand side of (\ref{conjeqn}) converges to zero.  As $a_{12}^{d-1}$ is nonzero on $B$, $(\cos f-\sin f')\rightarrow 0$  as $t\rightarrow s$.

Only the Jacobi field $J_{d-1}(t)$ can vanish before time $\pi$.  As $p$ is conjugate to $q$, $f(t)\rightarrow 0$ as $t\rightarrow s$.  As $s< \pi$, $\sin(s)\neq 0$. Conclude that $f(s)=f'(s)=0$, a contradiction since $J_{d-1}(t)=f E_{d-1}(t)$ is a nonzero Jacobi field along $\gamma_w(t)$.
\hfill $\Box$

\subsection{Proof of Theorem B}\label{sec:B}\hfill

\bigskip


\noindent \textit{Proof of (1)}:
Let $v \in S_pM$.  Since every tangent vector has rank $d-2$, $\dim(D_v)=\dim(v^{\perp})-1$.  Proposition \ref{noisotropic} and Lemma \ref{same} imply $D_v=E_v$.  Lemma \ref{crit} concludes the proof. 
\hfill $\Box$

\bigskip

\noindent \textit{Proof of (2)}:
Proposition \ref{noisotropic} and Lemma \ref{closed1} imply that $SM \subset \fix_{2\pi}$.  
\hfill $\Box$

\bigskip

\noindent \textit{Proof of (3)}:
As in the proof of (1), $D_v=E_v$ and $\dim(D_v)=\dim(v^{\perp})-1=d-2$ for all $v \in SM$.  Lemma \ref{totgeo} implies that for each $p \in M$, the eigenspace distribution $E$ on $S_pM$ is a nonsingular codimension one totally geodesic distribution.  Theorem \ref{Amap} yields a nonsingular projective class $[A_p] \in PGL(T_pM)$ for each $p \in M$, varying smoothly with $p \in M$ by Remark  \ref{smoothremark}.  For each $p \in M$ there are precisely two representatives of the projective class $[A_p]$ having determinant one.  As $M$ is simply connected there exists a smooth section $p \mapsto A_p \in [A_p]$.  Item (3) is therefore a consequence of the polar decomposition of $A_p$, see \cite[Lemma 2.32, pg. 64]{bes} for details.
\hfill $\Box$

\bigskip

The proof of item (4) of Theorem \ref{thm:B} requires some preliminary lemmas.\\

Corollary \ref{point} and Proposition \ref{noisotropic} imply that the restriction of  $\exp_p$ to the tangent sphere $S(0,\pi) \subset T_pM$ is a point map for each $p \in M$.  Define the map $F:M \rightarrow M$ by $F(p)=\exp_p(S(0,\pi))$.  Then $F^2=\Id$ by item (2) of Theorem \ref{thm:B}.

\begin{lemma}\label{isometry}
$F$ is an isometry of $M$.
\end{lemma}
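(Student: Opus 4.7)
The plan is to verify that $F$ is a smooth diffeomorphism and that each differential $dF_p : T_pM \to T_{F(p)}M$ is a linear isometry; since a diffeomorphism whose differentials are everywhere linear isometries is a Riemannian isometry, this would complete the proof.

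First I would establish that $F$ is smooth. By Proposition \ref{noisotropic}, $M = \mathcal{O}$, so Corollary \ref{point} ensures that $F(p) = \exp_p(\pi v)$ is well-defined and independent of the choice of $v \in S_pM$. To see smoothness at a given point $p_0$, choose any smooth local section $X$ of the unit sphere bundle on a neighborhood $U$ of $p_0$ (for example, by normalizing any smooth local frame). Then $F|_U(p) = \exp_p(\pi X(p))$ is a composition of smooth maps, hence smooth, and this is independent of $X$ by the well-definedness above.

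The key computation is that $F$ intertwines the geodesic flow by a shift of $\pi$. For any $v \in S_pM$,
\begin{equation*}
F(\gamma_v(t)) \;=\; \exp_{\gamma_v(t)}\!\bigl(\pi\, \dot\gamma_v(t)\bigr) \;=\; \gamma_v(t+\pi),
\end{equation*}
where the first equality uses the well-definedness of $F$ at $\gamma_v(t)$ in the direction $\dot\gamma_v(t)$ and the second is the geodesic reparameterization identity. Differentiating at $t=0$ gives
\begin{equation*}
dF_p(v) \;=\; \dot\gamma_v(\pi) \;\in\; S_{F(p)}M.
\end{equation*}
Thus $dF_p$ sends unit vectors to unit vectors; by linearity it preserves norms on all of $T_pM$, and by polarization it preserves the inner product. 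Hence $dF_p$ is an orthogonal linear map at every $p \in M$.

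Finally, since $F^2 = \mathrm{Id}$ by item (2) of Theorem \ref{thm:B}, $F$ is a smooth involution with everywhere orthogonal differential, and therefore a Riemannian isometry of $M$. There is no serious obstacle in this argument; the only delicate point is invoking Corollary \ref{point} together with Proposition \ref{noisotropic} to guarantee that $F$ is well-defined and smooth, after which recognizing $dF_p(v)$ as the terminal velocity of the length-$\pi$ geodesic issuing from $p$ in direction $v$ immediately yields the isometry property.
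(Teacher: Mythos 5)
Your argument is correct. It rests on the same key identity as the paper's proof, namely $F(\gamma_v(t))=\gamma_v(t+\pi)$, i.e.\ $F$ translates every complete geodesic a distance $\pi$ along itself; the difference is in how you finish. The paper stays at the metric level: since $F$ shifts each geodesic by $\pi$ while preserving lengths of subsegments, it preserves distances (minimizing geodesics go to segments of the same length, and $F^2=\Id$ gives the reverse inequality), so $F$ is a surjective distance-preserving map and hence a Riemannian isometry. You instead work at the level of differentials: smoothness of $F$ via a local unit section $X$ and $F(p)=\exp_p(\pi X(p))$ (legitimate, using Proposition \ref{noisotropic} and Corollary \ref{point} for well-definedness), then $\dd F_p(v)=\dot{\gamma}_v(\pi)$ for unit $v$ by differentiating the key identity, so $\dd F_p$ carries unit vectors to unit vectors and, by linearity and polarization, is orthogonal; together with $F^2=\Id$ this makes $F$ a local isometry that is a diffeomorphism, hence an isometry. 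What your route buys is that smoothness is established directly and no appeal to the Myers--Steenrod theorem (implicit in the distance-preserving formulation) is needed; what the paper's route buys is brevity, since distance preservation follows immediately from the geodesic-shifting property.
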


\begin{proof}
The map $F$ sends each complete geodesic in $M$ into itself while preserving the lengths of subsegments.
\end{proof}

\begin{lemma}\label{fixedpoint}
If $F$ has a fixed point, then $F=\Id$.
\end{lemma}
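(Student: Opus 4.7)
The plan is to show that the hypothesized fixed point $p_0$ forces the differential $dF_{p_0}$ to be the identity; the standard uniqueness theorem for isometries on a connected Riemannian manifold (both sides of $F$ agree at $p_0$ with the same derivative, so both equal $\Id$) then gives $F = \Id$ globally. By Lemma \ref{isometry} $F$ is an isometry, so the task reduces to establishing $A := dF_{p_0} = \Id_{T_{p_0}M}$.

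I would first compute $A$ directly. From the definition of $F$ together with item (2) of Theorem \ref{thm:B}, $F$ shifts every geodesic through $p_0$ by time $\pi$, that is, $F(\gamma_v(t)) = \gamma_v(t+\pi)$ for each $v \in S_{p_0}M$. Differentiating at $t=0$ yields $Av = \dot{\gamma}_v(\pi)$. Since $F^2 = \Id$ and $F(p_0) = p_0$, the map $A$ is an orthogonal involution of $T_{p_0}M$, yielding an orthogonal decomposition $T_{p_0}M = V_+ \oplus V_-$ into its $\pm 1$-eigenspaces. Everything then reduces to showing $V_- = \{0\}$.

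The heart of the proof is a rigidity argument ruling out the $-1$-eigenspace. Suppose $v \in V_- \cap S_{p_0}M$. Then $\dot{\gamma}_v(\pi) = -v$, and the hypothesis $F(p_0) = p_0$ forces $\gamma_v(\pi) = p_0$. By uniqueness of geodesics, the continuation of $\gamma_v$ past time $\pi$ must coincide with the geodesic starting at $p_0$ with initial velocity $-v$, which is the time-reversed $\gamma_v$: $\gamma_v(\pi+s) = \gamma_{-v}(s) = \gamma_v(-s)$ for all $s \in \R$. Reparameterising via $t = \pi + s$ converts this into the palindromic identity $\gamma_v(t) = \gamma_v(\pi - t)$, which says that $\gamma_v$ is symmetric about $t = \pi/2$. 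Differentiating at $t = \pi/2$ produces $\dot{\gamma}_v(\pi/2) = -\dot{\gamma}_v(\pi/2)$, hence $\dot{\gamma}_v(\pi/2) = 0$, contradicting the fact that $\gamma_v$ has unit speed.

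Thus $V_- = \{0\}$, so $A = \Id$ and $F = \Id$. The main obstacle is precisely the extraction of the palindromic identity from the algebraic condition $Av = -v$; once that geometric consequence is in hand, the unit-speed condition closes the argument at once. In particular, I expect that neither the spherical rank hypothesis nor the curvature bound $\sec < 9$ is needed for this lemma---only that $F$ is an isometry shifting each geodesic by $\pi$ and that $F$ fixes $p_0$.
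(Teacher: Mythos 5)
Your proof is correct and follows essentially the same route as the paper: both reduce to showing $\dd F_{p}=\Id$ at the fixed point, note that $F^2=\Id$ makes the derivative an involution with eigenvalues $\pm 1$, and rule out a $(-1)$-eigenvector $v$ by observing that $\gamma_v(\pi+t)=\gamma_v(-t)$ forces $\dot{\gamma}_v(\pi/2)=-\dot{\gamma}_v(\pi/2)$, contradicting unit speed. Your closing remark is also accurate: the argument uses only that $F$ is an isometry translating every geodesic by $\pi$, not the rank or curvature hypotheses.
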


\begin{proof}
 By Lemma \ref{isometry}, it suffices to prove if $F(p)=p$, then the derivative map $\dd F_p=\Id$.  The eigenvalues of the derivative map $\dd F_p$ are square roots of unity since $F^2=\Id$.   If $v \in T_pM$ is a unit  length eigenvector of eigenvalue $-1$, then $\dd F_p(v)=\dot{\gamma}_v(\pi)=-v$.  Therefore, $\gamma_{v}(\pi+t)=\gamma_{v}(-t)$ for all $t$.  By the chain rule, $\dot{\gamma}_v(\pi+t)=-\dot{\gamma}_v(-t)$ for all $t$.  When $t=-\frac{\pi}{2}$ this implies $\dot{\gamma}_v(\frac{\pi}{2})=-\dot{\gamma}_v(\frac{\pi}{2})$, a contradiction.
\end{proof}

\begin{lemma}\label{uppersec}
If $\sec<9$, then $F$ has a fixed point.
\end{lemma}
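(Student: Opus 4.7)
The plan is to invoke Weinstein's fixed-point theorem for isometries of a compact oriented positively curved manifold. Since $F$ is an isometry (Lemma \ref{isometry}), $M$ is compact, simply connected (hence orientable), $\sec \geq 1 > 0$, and $\dim M = d$ is even, it will suffice to show $F$ preserves orientation at some point $p$, i.e., that $\det(dF_p) > 0$. The bulk of the work is a Jacobi-field computation showing $\det(dF_p) = g(\pi)$ for a certain auxiliary function $g$, followed by a Sturm-theoretic argument forcing $g(\pi) > 0$ from the hypothesis $\sec < 9$.

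To compute $dF_p$, fix $v \in S_pM$ and decompose $T_pM = \R v \oplus E_v \oplus L$ orthogonally, with $L := E_v^{\perp} \cap v^{\perp}$ of dimension $1$ by Propositions \ref{constantrank} and \ref{noisotropic}. Choose orthonormal $e_1, \dots, e_{d-2}$ in $E_v$ and unit $u \in L$; parallel transport along $\gamma_v$ gives a corresponding orthonormal basis at $F(p) = \gamma_v(\pi)$. For $X \in T_pM$, a variation $p(s)$ with $\dot p(0) = X$ together with the parallel transport $v(s)$ of $v$ along $p(s)$ yields the geodesic variation $\alpha(s,t) = \exp_{p(s)}(t v(s))$, so $dF_p(X) = J_X(\pi)$ for the Jacobi field $J_X$ along $\gamma_v$ with $J_X(0) = X$ and $J_X'(0) = 0$. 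Direct computation gives
\begin{align*}
dF_p(v) &= P_\pi v, & dF_p(e_i) &= -P_\pi e_i, & dF_p(u) &= g(\pi)\,P_\pi u,
\end{align*}
where $g$ satisfies $g'' + Kg = 0$, $g(0) = 1$, $g'(0) = 0$, with $K(t) := \sec(\dot\gamma_v(t), P_t u) \in [1,9)$. Hence $\det(dF_p) = (-1)^{d-2}g(\pi) = g(\pi)$, and the task reduces to showing $g(\pi) > 0$.

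The key structural input is Corollary \ref{point}: $\exp_p$ is constant on $S(0,\pi)$, so by Gauss' Lemma $v^{\perp} \subseteq K_{\pi v}$. Specialized to the direction $u \in L$, this forces the sibling Sturm solution $f$ (with $f'' + Kf = 0$, $f(0) = 0$, $f'(0) = 1$) to satisfy $f(\pi) = 0$. Meanwhile, Corollary \ref{rank} confines the first zero $t_1^f$ of $f$ to the open interval $(0,\pi)$, since $v$ has rank $d-2$. The upper curvature hypothesis $K(t) < 9$ now enters via Sturm comparison against the solution $\sin(3t)/3$ of $y'' + 9y = 0$ with the same initial conditions: the $k$-th zero satisfies $t_k^f > k\pi/3$, so in particular $t_3^f > \pi$. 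Combined with $t_1^f < \pi$ and $\pi = t_k^f$ for some $k$, this forces $t_2^f = \pi$.

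Finally, the Sturm separation theorem applied to the linearly independent pair $f,g$ strictly interlaces their zeros, so $g$ has exactly two zeros in $(0,\pi)$, one in $(0, t_1^f)$ and one in $(t_1^f, \pi)$. The Wronskian $W = fg' - f'g \equiv -1$ precludes $g(\pi) = 0$, so tracking signs from $g(0) = 1 > 0$ across the two sign changes yields $g(\pi) > 0$. Thus $F$ preserves orientation, and Weinstein's theorem produces a fixed point. The main obstacle is the Sturm bookkeeping that forces $t_2^f = \pi$: both the rank-$(d-2)$ hypothesis (excluding $k = 1$) and the upper bound $\sec < 9$ (excluding $k \geq 3$) are essential here, which is presumably why the theorem statement carries the pinched-curvature hypothesis.
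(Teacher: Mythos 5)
Your argument is correct, but it takes a genuinely different route from the paper's. The paper argues by contradiction via the displacement function: a minimum of $x \mapsto d(x,F(x))$, together with $F^2=\Id$, produces a smoothly closed geodesic $\gamma \cup F(\gamma)$ of length $2L$ with $L \leq \diam(M) < \pi$ (Toponogov's diameter rigidity, as $M$ is not of constant curvature); since $F$ translates this closed geodesic by $\pi$ and is fixed point free, $2L=2\pi/(2k+1)$ with $k \geq 1$, so $\inj(M)\leq \pi/3$, contradicting $\inj(M)=\conj(M)>\pi/3$, which comes from Klingenberg's theorem and Rauch comparison with $\sec<9$. You instead compute $\dd F_p$ directly through Jacobi fields with $J(0)=X$, $J'(0)=0$ (valid because $F(q)=\exp_q(\pi w)$ for every $w\in S_qM$), obtain $\det(\dd F_p)=g(\pi)$ in a parallel adapted frame, and force $g(\pi)>0$ by Sturm comparison and separation, where both hypotheses enter exactly as you say: rank $d-2$ puts the first zero of $f$ before $\pi$, and $\sec<9$ spaces consecutive zeros by more than $\pi/3$, pinning $f$'s second zero at $\pi$; Weinstein's fixed point theorem then applies since $M$ is compact, even dimensional, orientable (simply connected), and positively curved. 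Both proofs use the same $\pi/3$ threshold and both use simple connectivity (you for orientability, the paper for $\inj=\conj$); your route avoids Toponogov rigidity and Klingenberg's estimate at the cost of importing Weinstein's theorem, and it yields a bit more, namely that $F$ preserves orientation with $\dd F_p$ explicitly diagonalized. One step worth making explicit: the identity $J_u(t)=g(t)P_t u$ needs $P_t u$ to remain an eigenvector of the Jacobi operator along all of $\gamma_v$, which follows since $E_{\dot{\gamma}_v(t)}=D_{\dot{\gamma}_v(t)}=P_t(D_v)$ has dimension $d-2$ and Lemma \ref{maximal} applies, exactly as in Lemma \ref{maxjab}.
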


\begin{proof}
If $F$ has no fixed points, then the displacement function of $F$, $x \mapsto d(x,F(x))$, obtains a positive minimum value at some $p \in M$ as $M$ is compact.  A minimizing geodesic segment $\gamma$ that joints $p$ to $F(p)$ has length $L\leq \diam(M)<\pi$ by Toponogov's diameter rigidity theorem \cite{top} (see also \cite[Remark 3.6, pg. 157]{sa}).  Let $m$ denote the midpoint of the segment $\gamma$.  The union $\gamma \cup F(\gamma)$ forms a smoothly closed geodesic of length $2L$ since otherwise $d(m,F(m))<L=d(p,F(p)$.  By item (2) and since $F$ has no fixed points, $2L \in \{2\pi/(2k+1)\, \vert\, k\geq 1\}$.  Therefore, $\inj(M)\leq L \leq \pi/3$.  As $M$ is simply connected, even dimensional, and positively curved, $\inj(M)=\conj(M)$.  The Rauch comparison theorem and the assumption $\sec<9$ imply that $\conj(M)> \pi/3$, a contradiction.
\end{proof}

\noindent \textit{Proof of (4)}:
Lemmas \ref{fixedpoint} and \ref{uppersec} imply that $F=\Id$.  It follows that each geodesic in $M$ is a closed geodesic having $\pi$ as a period.  If a closed geodesic of length $\pi$ is not simple, then there exist a geodesic loop in $M$ of length at most $\pi/2$.  In this case, $\inj(M)\leq \pi/4$, contradicting $\inj(M)=\conj(M)>\pi/3$.  Therefore, each geodesic in $M$ is simple, closed, and of length $\pi$.

Each unit speed geodesic starting at a point $p \in M$ of length $\pi$ has equal index $k=1,3,7,$ or $\dim(M)-1$ in the pointed loop space $\Omega(p,p)$ by the Bott-Samelson Theorem \cite[Theorem 7.23]{bes}.  The multiplicity of each conjugate point to $p$ in the interior of these geodesics is one since the spherical Jacobi fields defined in Lemma \ref{spherjab} do not vanish before time $\pi$.  If $k \geq 3$, the Jacobi field given by Lemma \ref{maxjab} has a pair of consecutive vanishing times $0<t_1<t_2<\pi$ satisfying $t_2-t_1 \leq \pi/k \leq \pi/3$.  This contradicts $\conj(M)>\pi/3$ as $\sec<9$.  Conclude that $k=1$ and that $M$ has the homotopy type of $\mathbb{C}\mathbb{P}^{d/2}$ by \cite[Theorem 7.23]{bes}.
\hfill $\Box$

\subsection{Proof of Theorem \ref{thm:C}}\label{sec:C}

Recall that a Riemannian manifold satisfies the \textit{Raki\'c duality principle} if for each $p \in M$, orthonormal vectors $v,w \in S_pM$, and $\lambda \in \R$,  $v$ is a $\lambda$-eigenvector of the Jacobi operator $\mathcal{J}_w$ if and only if $w$ is a $\lambda$-eigenvector of the Jacobi operator $\mathcal{J}_v$.  This subsection contains the proof of Theorem \ref{thm:C}, an easy consequence of the next proposition.

\begin{proposition}\label{rakic}
Let $M$ be a complete and simply connected Riemannian $d$-manifold with $d\geq 4$ even, $\sec \geq 1$, spherical rank at least $d-2$, and no isotropic points.  If $M$ satisfies the Raki\'c duality principle, then $M$ is isometric to $\mathbb{C}\mathbb{P}^{d/2}$ endowed with the symmetric metric having constant holomorphic curvatures equal to $4$.
\end{proposition}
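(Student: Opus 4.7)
The plan has three layers. First, I use the Raki\'c duality principle to upgrade the Hangan--Lutz skew endomorphism $A$ from Theorem \ref{thm:B}(3) into a $\g$-compatible almost complex structure $J$ on $M$. Second, I carry out curvature calculations in adapted framings to show $M$ is Einstein, with Jacobi spectrum of $\mathbb{CP}^n$-type. Third, I invoke a rank-one symmetric space classification to identify $M$ with symmetric $\mathbb{CP}^{d/2}$ of holomorphic curvature $4$.

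\textbf{Producing $J$.} By Theorem \ref{thm:A} the dimension $d$ is even, by Proposition \ref{noisotropic} $M = \mathcal{O}$, and by Proposition \ref{constantrank} every $v \in SM$ has rank $d-2$. Theorem \ref{thm:B}(3) then supplies a smooth skew-symmetric section $A$ of $\operatorname{End} TM$ with $E_v = \spn\{v, A_p v\}^{\perp}$ for every $v \in S_pM$. The Jacobi operator $\mathcal{J}_v$ has eigenvalue $1$ on $E_v$ and a single extra eigenvalue $\lambda(v) > 1$ on the line $\R A_p v$. Setting $w := A_p v / \|A_p v\|$, Raki\'c duality applied to the orthonormal pair $(v,w)$ transfers the $\lambda(v)$-eigenvector $w$ of $\mathcal{J}_v$ to a $\lambda(v)$-eigenvector $v$ of $\mathcal{J}_w$, which forces $v \in \R A_p w$ and hence $A_p^2 v \parallel v$. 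Self-adjointness of $A_p^2$ upgrades this pointwise proportionality to $A_p^2 = -c(p)\,\Id$ for a smooth positive function $c$ on $M$; setting $J := A/\sqrt{c}$ yields a $\g$-compatible almost complex structure satisfying $\sec(v, Jv) = \lambda(v)$ and $E_v = \spn\{v, Jv\}^{\perp}$.

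\textbf{Einstein via adapted frames.} Tracing the Jacobi operator gives $\Ric(v,v) = (d-2) + \lambda(v)$, so the Einstein condition is equivalent to constancy of $\lambda$ on $SM$. Fix $v \in S_pM$ and an adapted framing $\{E_0, \ldots, E_{d-1}\}$ along $\gamma_v$ with $E_{d-1}(0) = Jv$ and $E_1(0), \ldots, E_{d-2}(0) \in E_v$. I feed the identity $A^2 = -c\,\Id$, the nonvanishing of the contact structure coefficient $a_{12}^{d-1}$ from Corollary \ref{contact}, and the curvature formulas (\ref{curv1})--(\ref{curv2}) into the $2\pi$-periodicity of the geodesic flow from Theorem \ref{thm:B}(2) to deduce that $E_{d-1}(t) = J\dot\gamma_v(t)$ (so $J$ is parallel along $\gamma_v$) and that $\lambda(\dot\gamma_v(t))$ is constant in $t$. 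Varying $v$ and using connectedness of $SM$ then makes $\lambda$ a global constant, so $M$ is Einstein with Jacobi spectrum $\{1^{d-2}, \lambda^{1}\}$ independent of $v$, i.e., pointwise Osserman.

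\textbf{Identification and main obstacle.} A simply connected, positively curved, pointwise Osserman Riemannian manifold of even dimension with Jacobi multiplicity pattern $(d-2, 1)$ is forced by the classification of Osserman manifolds to be isometric to symmetric $\mathbb{CP}^{d/2}$; the value $\lambda = 4$ is then fixed by matching the Einstein constant $(d-2)+\lambda$ to that of the Fubini--Study metric, delivering holomorphic sectional curvature $4$. I expect the crux to lie in the Einstein step: Raki\'c duality produces the pointwise algebraic identity $A_p^2 = -c(p)\,\Id$ essentially for free, but promoting this algebraic datum to the differential statement that $\lambda$ is constant requires a delicate coupling of the adapted-frame curvature identities with the $2\pi$-periodicity of the geodesic flow and the contact nature of $E$. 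The production of $J$ and the final identification are, by contrast, largely formal once this analytic core is in place.
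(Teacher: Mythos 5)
Your first and last layers are essentially the paper's: the single application of Raki\'c duality giving $A_p^2v\parallel v$, hence $A_p^2=-c(p)\Id$, is exactly Lemma \ref{Asquared} (the paper normalizes $\det A_p=1$ so $c\equiv 1$), and the final identification once $\lambda$ is a global constant is the paper's citation of Chi's curvature characterization, with $\lambda=4$ pinned down by $\cvc(1)$. The problem is the middle layer, which you yourself flag as the crux: it is asserted, not proved, and the mechanism you propose would not work as described. Nothing in the adapted-frame formulas (\ref{curv1})--(\ref{curv2}) plus $2\pi$-periodicity of the flow plus the contact coefficient yields ``$E_{d-1}(t)=J\dot\gamma_v(t)$ and $\lambda(\dot\gamma_v(t))$ constant in $t$''; indeed $J$ being parallel along geodesics is nowhere needed or established in the paper's proof, and in the K\"ahler sections even full parallelism of $J$ does not by itself force $\lambda$ constant. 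The decisive input you omit is a \emph{second} application of the Raki\'c duality principle: since $E_{d-1}(b,t)$ spans the $\lambda$-eigenline of $\mathcal{J}_{E_0(b,t)}$, duality makes $E_0$ an eigenvector of $\mathcal{J}_{E_{d-1}}$, so $R(E_{d-1},E_0,E_j,E_{d-1})=0$, which via (\ref{curv2}) gives $E_j(f)'=0$ --- after first killing the $a_{j\,d-1}^{d-1}$ terms using Corollary \ref{geodesic} together with $A^2=-\Id$ (Lemma \ref{maxdirection}); the contact property then kills $E_{d-1}(f'-\cot f)$, and integration shows $f(b,t)$ is independent of $b$ (Proposition \ref{radial}).

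There is also a logical gap in your globalization even granting your claim: constancy of $\lambda$ along each geodesic is merely invariance under the geodesic flow, and the flow is far from transitive on $SM$ (here it is periodic), so ``varying $v$ and using connectedness of $SM$'' does not give a global constant. What the radiality of $f$ actually delivers is constancy of $\lambda$ on each \emph{fiber} $S_pM$ (since $\lambda(b)=\lim_{t\to 0}-f''/f$), i.e.\ $M$ is pointwise Einstein with $\Ric_p=(\lambda(p)+d-2)\g_p$; Schur's theorem for the Ricci tensor then makes $\lambda$ independent of $p$, and only at that point does the classification step apply. So the proposal's skeleton matches the paper, but the analytic core --- the second use of duality, the elimination of the non-radial terms, and the fiberwise-plus-Schur globalization --- is missing and the substitute you sketch (periodicity-driven constancy along orbits) would not close it.
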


The proof of this proposition appears at the end of the subsection.  As a preliminary step, observe that the proof of item (3) of Theorem \ref{thm:B} shows that there exists a smooth section $p \mapsto A_p \in SL(T_pM)$ where each $A_p$ is skew-symmetric and satisfies $D_v=\spn\{v,A_pv\}^{\perp}$ for each $v \in S_pM$.  Define $\lambda:SM \rightarrow \R$ by $\lambda(v)=\sec(v,A_pv)$ where $p$ denotes the footpoint of the vector $v\in SM$.

\begin{lemma}\label{Asquared}
$A_p^2=-\Id$ for each $p \in M$.
\end{lemma}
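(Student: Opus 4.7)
The plan is to apply the Rakić duality principle pointwise at each $p \in M$ to pin down $A_p^2$ on every unit vector, and then use the $SL$ normalization to fix the scalar.

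Fix $v \in S_pM$. Since $A_p$ is skew-symmetric, $A_p v \perp v$, and since $D_v = \spn\{v, A_p v\}^\perp$ coincides with the $1$-eigenspace $E_v$ of $\mathcal{J}_v$ (by Lemma \ref{same}, using that $p$ is nonisotropic), the orthogonal complement of $E_v$ inside $v^\perp$ is precisely the line $\R A_p v$. As $\mathcal{J}_v$ is self-adjoint on $v^\perp$ and leaves $E_v$ invariant, $A_p v$ is an eigenvector of $\mathcal{J}_v$, with eigenvalue $\lambda(v) = \sec(v, A_p v)$. The hypothesis $\sec \geq 1$ together with the fact that $E_v$ is the \emph{full} $1$-eigenspace of codimension one forces $\lambda(v) > 1$ strictly.

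Next, set $u = A_p v / \|A_p v\|$. The Rakić duality hypothesis transfers the relation $\mathcal{J}_v(A_p v) = \lambda(v) A_p v$ into $\mathcal{J}_u(v) = \lambda(v) v$, so $v$ is a $\lambda(v)$-eigenvector of $\mathcal{J}_u$. Because $\lambda(v) > 1$, the vector $v$ cannot lie in $E_u = D_u$; hence it must lie in the one-dimensional invariant complement of $D_u$ in $u^\perp$, which is $\R A_p u$. Thus $v$ is proportional to $A_p u = A_p^2 v / \|A_p v\|$, and comparing via $\g(A_p^2 v, v) = -\|A_p v\|^2$ yields
\begin{equation*}
A_p^2 v = -\|A_p v\|^2 \, v.
\end{equation*}

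This shows every unit vector in $T_pM$ is an eigenvector of the self-adjoint operator $A_p^2$, which forces $A_p^2 = c_p \Id$ for some scalar $c_p$; and $c_p < 0$ since $A_p$ is skew-symmetric and nonsingular. Finally, $\det A_p = 1$ and the evenness of $d$ give $c_p^d = (\det A_p)^2 = 1$ with $c_p < 0$, hence $c_p = -1$, i.e., $A_p^2 = -\Id$. The single nontrivial step is the application of Rakić duality: the crux is the strict inequality $\lambda(v) > 1$, which forbids $v \in D_u$ and confines it to the one-dimensional line $\R A_p u$, on which the identity $A_p^2 v = -\|A_p v\|^2 v$ is then automatic. (As a side benefit, this also makes $\|A_p v\| \equiv 1$, which is consistent with Corollary \ref{geodesic} applied to the totally geodesic line field $L = E^\perp$.)
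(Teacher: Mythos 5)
Your proof is correct and follows essentially the same route as the paper: use the strict inequality $\lambda(v)>1$, apply Raki\'c duality to transfer the eigenvector relation from $\mathcal{J}_v$ to $\mathcal{J}_{A_pv/\|A_pv\|}$, conclude from the two-eigenspace structure that $v$ is proportional to $A_p^2v$ (with sign fixed by skew-symmetry), and then use $\det A_p=1$ to pin down $A_p^2=-\Id$. No issues.
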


\begin{proof}
The proof of item (1) of Theorem \ref{thm:B} shows that $A_pv$ is orthogonal to the $1$-eigenspace $D_v$ of the Jacobi operator $\mathcal{J}_v$.  Therefore $\lambda(v)>1$ and $A_pv/\|A_pv\|$ is a unit vector in the $\lambda(v)$-eigenspace of $\mathcal{J}_v$.  Similarly, $\lambda(A_pv/\|A_pv\|)>1$ and $A^2_p v/\|A_p^2v\|$ is a unit vector in the $\lambda(A_pv/\|A_pv\|)$-eigenspace of the Jacobi operator $\mathcal{J}_{A_pv/\|A_pv\|}$.  The Raki\'c duality property implies that $v$ is a unit vector in the $\lambda(v)$-eigenspace of the Jacobi operator $\mathcal{J}_{A_pv/\|A_pv\|}$.  The Jacobi operator $\mathcal{J}_{A_pv/\|A_pv\|}$ has two eigenspaces, the $1$-eigenspace $D_{A_pv/\|A_pv\|}$ of dimension $d-2$ and its one dimensional orthogonal complement, the $\lambda(A_pv/\|A_pv\|)$ eigenspace.  Conclude that for each $v \in S_pM$, $\lambda(v)=\lambda(A_pv/\|A_pv\|)$ and by skew-symmetry of $A_p$ that $v=-A_p^2v/\|A^2_pv\|$.  As $A_p^2v$ is a multiple of $v$ for each $v \in S_pM$ and $A_p$ is skew-symmetric of determinant one, $A_p^2=-\Id$.
\end{proof}

Fix $p \in M$ and a metric ball $B$ in the tangent sphere $S_pM$.  Let $\{e_1,\ldots,e_{d-1}\}$ be a smooth framing of $B$ with $\{e_1,\ldots,e_{d-2}\}$ tangent to the spherical distribution $D$.  

\begin{lemma}\label{maxdirection}
The field $e_{d-1}$ satisfies $\nabla_{e_{d-1}} e_{d-1}=0$ on $B$ with respect to the round metric on $S_pM$.  Equivalently, $a_{j\, d-1}^{d-1}=0$ for each $j \in \{1, \ldots, d-2\}$.
\end{lemma}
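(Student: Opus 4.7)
My plan is to recognize $e_{d-1}$ as the Killing vector field on the round sphere $S_pM$ generated by $A_p$, and then use the classical fact that its integral curves are great circles.

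First, by Lemma \ref{Asquared}, $A_p^2=-\Id$; combined with the skew-symmetry of $A_p$ this forces $A_p$ to be orthogonal, so for every $v\in S_pM$ the vector $A_pv$ has unit length and is orthogonal to $v$. Thus $v\mapsto A_pv$ is a smooth unit length tangent vector field on $S_pM$. The identification $D_v=\spn\{v,A_pv\}^{\perp}$ from the construction preceding Lemma \ref{Asquared} makes $A_pv$ a basis of $v^{\perp}\cap D_v^{\perp}$, so it agrees with $\pm e_{d-1}$ on $B$; choosing signs appropriately on the ball $B$, I may assume $e_{d-1}(v)=A_pv$.

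Second, for any $v\in B$ the curve $c(t)=e^{tA_p}v=\cos(t)v+\sin(t)A_pv$ lies on $S_pM$ and satisfies $\dot c(t)=A_pc(t)=e_{d-1}(c(t))$, so it is the integral curve of $e_{d-1}$ through $v$; moreover its acceleration $\ddot c(t)=A_p^2c(t)=-c(t)$ is purely normal to $S_pM\subset T_pM$, so $c$ is a great circle, i.e.\ a geodesic of the round metric parameterized by arclength. Consequently $e_{d-1}$ is a unit length geodesic field, which means $\nabla_{e_{d-1}}e_{d-1}=0$ on $B$.

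For the equivalent algebraic statement, I apply the Koszul formula (\ref{koszul}) in the orthonormal frame $\{e_1,\ldots,e_{d-1}\}$. Since the inner products $\langle e_i,e_j\rangle$ are constant and $[e_{d-1},e_{d-1}]=0$, for each $j\in\{1,\ldots,d-2\}$,
\begin{equation*}
\langle \nabla_{e_{d-1}}e_{d-1},e_j\rangle=\langle[e_j,e_{d-1}],e_{d-1}\rangle=a_{j\,d-1}^{d-1},
\end{equation*}
while the $e_{d-1}$-component of $\nabla_{e_{d-1}}e_{d-1}$ vanishes automatically because $|e_{d-1}|\equiv 1$. Hence $\nabla_{e_{d-1}}e_{d-1}=0$ if and only if $a_{j\,d-1}^{d-1}=0$ for every such $j$.

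I do not expect a substantive obstacle; the entire content is the identification of $e_{d-1}$ with the rotational Killing field generated by $A_p$, combined with the elementary fact that the orbits of the one-parameter group $e^{tA_p}$ with $A_p^2=-\Id$ are great circles. The only minor technicality is that the sign of $e_{d-1}$ can be chosen consistently to match $+A_pv$ on all of $B$, which is automatic since $B$ is a metric ball and hence contractible.
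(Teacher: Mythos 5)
Your proof is correct and follows essentially the same route as the paper: the paper deduces that the line field $e_{d-1}$ spans is totally geodesic by citing Lemma \ref{Asquared} together with Corollary \ref{geodesic}, and your great-circle computation $c(t)=\cos(t)v+\sin(t)A_pv$, $\dot c(t)=A_pc(t)$ is precisely the proof of that corollary, merely inlined. Your Koszul-formula derivation of the equivalence with $a_{j\,d-1}^{d-1}=0$ is the same computation as the paper's, so there is nothing to add (the sign ambiguity $e_{d-1}=\pm A_pv$ you note is indeed harmless since $\nabla_{-X}(-X)=\nabla_XX$).
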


\begin{proof}
The first assertion is a consequence of Corollary \ref{geodesic} and Lemma \ref{Asquared}.  The second is derived from (\ref{koszul})  
$$2\langle \nabla_{e_{d-1}} e_{d-1}, e_j \rangle=\langle [e_{d-1},e_{d-1}], e_j \rangle-\langle[e_{d-1},e_j],e_{d-1} \rangle+\langle[e_j,e_{d-1}], e_{d-1} \rangle=2a_{j\, d-1}^{d-1}.$$
\end{proof}

Consider the \textit{adapted framing} $\{E_0(t),\ldots,E_{d-1}(t)\}$ along geodesics with initial velocity in $B$ induced by the framing $\{e_1,\ldots,e_{d-1}\}$ of $B$.  Let $\epsilon< \inj(M)$ and let $J(b,t)=f(b,t)E_{d-1}(t)$ be the Jacobi field along $\gamma_b(t)$ defined by Lemma \ref{maxjab}. Then $f>0$ on $B \times(0,\epsilon)$.

\begin{proposition}\label{radial}
The function $f:B \times(0, \epsilon) \rightarrow \R$ is radial: $E_j(f)=0$ for each $j\in \{1,\ldots,d-1\}$, or equivalently, $f(b,t)$ does not depend on $b\in B$.
\end{proposition}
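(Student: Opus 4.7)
The plan is to show $E_j(f) \equiv 0$ for $j \in \{1, \ldots, d-1\}$, combining the Raki\'c duality hypothesis with formula (\ref{curv2}) and the initial conditions of the Jacobi ODE.

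I first dispose of $j \in \{1, \ldots, d-2\}$. Under our standing hypotheses $\dim D_{E_0} = d-2$, so $E_{d-1}$ is (up to sign) the unique unit vector orthogonal to $D_{E_0}$ in $E_0^{\perp}$; it is therefore the unique direction where $\mathcal{J}_{E_0}$ has its large eigenvalue $\lambda := \sec(E_0, E_{d-1})$. By Raki\'c duality, $E_0$ is a $\lambda$-eigenvector of $\mathcal{J}_{E_{d-1}}$, and self-adjointness of the Jacobi operator gives
\[
R(E_{d-1}, E_0, E_j, E_{d-1}) = \langle \mathcal{J}_{E_{d-1}}(E_j), E_0 \rangle = \langle E_j, \mathcal{J}_{E_{d-1}}(E_0) \rangle = \lambda \langle E_j, E_0 \rangle = 0.
\]
This holds at every point of $\gamma_b$, so formula (\ref{curv2}) together with $a^{d-1}_{j\, d-1} = 0$ from Lemma \ref{maxdirection} yields $(E_j(f))' \equiv 0$ along $\gamma_b$. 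Hence $E_j(f)(b, t)$ is constant in $t$, say equal to some $c_j(b)$.

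To identify $c_j(b)$ with zero, I would use the identification $E_j(q) = \frac{1}{\sin t}\,\mathrm{d}F_t(e_j)$ at $q = \exp_p(tb)$ coming from (\ref{again}) and (\ref{related}), which reads $\partial_{e_j} f(b, t) = \sin(t)\, c_j(b)$ where $\partial_{e_j}$ denotes the derivative in the $b$-variable along $e_j$. Differentiating in $t$ and evaluating at $t = 0$ gives $\partial_{e_j} f'(b, 0) = c_j(b)$; since $f'(b, 0) = 1$ is independent of $b$, this forces $c_j(b) = 0$, concluding that $E_j(f) \equiv 0$ for $j \in \{1, \ldots, d-2\}$.

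For the remaining direction $j = d-1$, I would argue that $\partial_{e_{d-1}} f(b, t) = 0$ via the variation of the Jacobi ODE. By Lemma \ref{maxdirection}, the integral curve of $e_{d-1}$ through $b$ is a great circle $c(s)$ in $B$ with $\dot{c}(s) = e_{d-1}(c(s))$, staying in the fixed two-plane $\mathrm{span}(b, e_{d-1}(b)) \subset T_pM$. For each $s$, the function $f(c(s), t)$ solves $f'' + K(c(s), t) f = 0$ with $f(c(s), 0) = 0$ and $f'(c(s), 0) = 1$. Differentiating in $s$ at $s = 0$ yields the variational equation
\[
(\partial_s f)'' + K (\partial_s f) + (\partial_s K)\, f = 0, \quad (\partial_s f)(0) = 0, \quad (\partial_s f)'(0) = 0,
\]
from which $\partial_s f \equiv 0$ once $\partial_s K|_{s=0} = e_{d-1}(K)(b, t) = 0$. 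The vanishing of $e_{d-1}(K)$ can be extracted from further curvature identities forced by Raki\'c duality, exploiting the symmetric eigenstructure of $\mathcal{J}_{E_{d-1}}$ obtained in the first step together with the almost complex structure $A$ from Lemma \ref{Asquared} which intertwines $E_0$ and $E_{d-1}$.

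The main obstacle is this last step for $j = d-1$. Unlike the earlier case, formula (\ref{curv2}) is degenerate here because $R(E_{d-1}, E_0, E_{d-1}, E_{d-1}) = 0$ vacuously by antisymmetry. To extract the radial dependence of $K$ in the $e_{d-1}$ direction, one must combine the Raki\'c-induced complex structure on $T_pM$ with the simplified Christoffel symbols obtained from Lemma \ref{christoffel1} after substituting $E_j(f) = 0$ (for $j < d-1$) and $a^{d-1}_{j\, d-1} = 0$.
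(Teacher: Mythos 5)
Your treatment of the directions $j\in\{1,\ldots,d-2\}$ is correct and runs parallel to the paper: Raki\'c duality makes $E_0$ an eigenvector of $\mathcal{J}_{E_{d-1}}$, so $R(E_{d-1},E_0,E_j,E_{d-1})=0$, and with Lemma \ref{maxdirection} and (\ref{curv2}) this gives $E_j(f)'=0$; your additional boundary argument at $t=0$ (using $\dd F_t(e_j)=\sin(t)E_j$ and $f'(b,0)\equiv 1$) then legitimately upgrades this to $E_j(f)=0$ for $j\leq d-2$, which is in fact slightly more than the paper extracts at that stage. The genuine gap is the case $j=d-1$, which you yourself flag as ``the main obstacle'' but never close. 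Your proposed route --- the variational equation for the ODE $f''+Kf=0$, which needs $e_{d-1}(K)=0$ --- is not substantiated by any identity, and it is essentially circular: since $K=-f''/f$ along each ray, knowing that $K$ has no dependence in the $e_{d-1}$ direction is more or less equivalent to the radiality of $f$ that you are trying to prove, and formula (\ref{curv2}) degenerates in exactly this direction, as you note. No ``further curvature identity forced by Raki\'c duality'' is exhibited, so the proof is incomplete as written.

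The missing idea in the paper is not another curvature identity but the contact property of the spherical distribution (Corollary \ref{contact}): since the radial flow carries $D$ to the span of $\{E_1,\ldots,E_{d-2}\}$ on the geodesic spheres, the coefficient $a^{d-1}_{12}=\langle[e_1,e_2],e_{d-1}\rangle$ is nonzero, and by (\ref{brack}) the bracket $[E_1,E_2]$ has $E_{d-1}$-component $\tfrac{a^{d-1}_{12}f}{\sin^2}E_{d-1}$. Hence any function annihilated by $E_1,\ldots,E_{d-2}$ is automatically annihilated by $E_{d-1}$. The paper applies this to $g=f'-\cot f$ (because it only has $E_j(f)'=E_j(g)=0$) and then recovers $f$ by integrating $(f/\sin)'=g/\sin$ with the initial condition from L'Hopital; with your stronger intermediate statement $E_j(f)=0$ for $j\leq d-2$, the same bracket computation applied directly to $f$ yields $0=[E_1,E_2](f)=\tfrac{a^{d-1}_{12}f}{\sin^2}E_{d-1}(f)$, hence $E_{d-1}(f)=0$, and would close your argument in one line. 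As it stands, however, your $j=d-1$ step is a gap, not a proof.
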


\begin{proof}
Lemma \ref{maxdirection} and (\ref{curv2}) imply $R(E_{d-1},E_0,E_j,E_{d-1})=\frac{-E_j(f)'}{f}$ for each $j \in \{1,\ldots, n-2\}$. For each $b \in B$ and $t\in (0, \epsilon)$, $E_{d-1}(b,t)$ is an eigenvector of eigenvalue $\lambda(E_0(b,t))$ for the Jacobi operator $\mathcal{J}_{E_0(b,t)}$.  The symmetry property implies that $E_{0}(b,t)$ is an eigenvector of the Jacobi operator $\mathcal{J}_{E_{d-1}(b,t)}$.  Conclude $E_j(f)'=0$ for each $j \in \{1,\ldots, d-2\}$.  Use (\ref{brack}) to calculate

\begin{eqnarray}
0&=&E_j(f)'\nonumber\\
&=&E_0 E_j(f)\nonumber\\
&=&[E_0,E_j](f)+E_j E_0(f)\nonumber\\
&=&-\cot E_j(f)+E_{j}(f')\nonumber\\
&=&E_j(f'-\cot f).\nonumber
\end{eqnarray}

Let $g=f'-\cot f$. Corollary \ref{contact} and the fact that the time $t$-map of the radial flow generated by $E_0$ carries the spherical distribution $D$ to the distribution spanned by $\{E_1(t),\ldots,E_{d-2}(t)\}$ on $\exp_p(tB)\subset S(p,t)$ imply that the latter distribution is contact.  Conclude that $E_{d-1}(g)=0$ and that $g$ is a radial function.

Therefore $$h:=\frac{g}{\sin}=\frac{f'\sin-\cos f}{\sin^2}=(\frac{f}{\sin})'$$ is a radial function.  Let $k=\frac{f}{\sin}$ and consider the restriction $k(t)$ to a geodesic $\gamma_b(t)$ with $b\in B$.  By L'Hopital's rule and the initial condition $f'(0)=1$, $\lim_{t \rightarrow 0} k(t)=\frac{f'(0)}{\cos(0)}=1$.  By the fundamental theorem of calculus, $k(t)=1+\int_{0}^{t} h(s)\,  ds$ is a radial function.  Therefore $f=k \sin$ is a radial function. 
\end{proof}

\medskip

\noindent \textit{Proof of Proposition \ref{rakic}}:
It suffices to prove that $\lambda:SM \rightarrow \R$ is constant by \cite[Theorem 2, pg. 193]{chi}.  Fix $p \in M$ and a metric ball $B \subset S_pM$ as in Proposition \ref{radial}.  Proposition \ref{radial} implies that $\lambda$ is constant on $B$ since by the Jacobi equation,  $\lambda(b)=\lim_{t \rightarrow 0} \frac{-f''}{f}(b,t)$ for each $b \in B$.  As $S_pM$ is connected, $\lambda:S_pM \rightarrow \R$ has a constant value $\lambda(p)$.  Each point $p\in M$ is an Einstein point with $\Ric_p=(\lambda(p)+d-2)\g_p$.  The adaptation of Schur's Theorem for Ricci curvatures \cite[Note 3, Theorem 1, pg. 292]{kono} implies that $M$ is globally Einstein. Therefore $\lambda(p)$ is independent of $p \in M$.
\hfill $\Box$

\medskip

\noindent \textit{Proof of Theorem \ref{thm:C}}:
Apply Theorem \ref{thm:A}, Proposition \ref{noisotropic}, and Proposition \ref{rakic}.
\hfill $\Box$







\section{Proof of Theorem \ref{thm:D} in real dimension at least six}\label{sec:D1}

Throughout this section, $M$ is K\"ahlerian with complex structure $J:TM \rightarrow TM$, real even dimension $d \geq4$, $\sec \geq 1$, and spherical rank at least $d-2$.  This section contains preliminary results, culminating in the proof of Theorem \ref{thm:D} when $d \geq 6$.

As $M$ is orientable (complex), even-dimensional, and positively curved, $M$ is simply connected by Synge's theorem.  As $M$ is K\"ahlerian, its second betti number $b_2(M) \neq 0$, whence $M$ is not homeomorphic to a sphere.  Therefore $M$ does not have constant sectional curvatures.

 Proposition \ref{noisotropic} now implies that $M$ has no isotropic points ($M=\mathcal{O}$).  Proposition \ref{constantrank} implies that every vector in $M$ has rank $d-2$.  Lemmas \ref{totgeo} and \ref{same} imply that that the eigenspace distribution is a nonsingular codimension one distribution on each unit tangent sphere in $M$.   By Theorem \ref{Amap}, there exists a nonsingular projective class $[A_p] \in PGL(T_pM)$ of skew-symmetric maps such that $D_v=E_v=\{v,A_pv\}^{\perp}$ for each $p \in M$ and $v \in S_pM$.



\subsection{Relating the complex structure and the eigenspace distribution.}
Fix $p \in M$ and choose a representative $A_p \in [A_p]$.  Assume that $V=\sigma_1 \oplus \sigma_2$ is an orthogonal direct sum of two $A_p$-invariant $2$-plane sections.  There exist scalars $0<\mu_1$ and $0<\mu_2$ such that $\|A_p v_i\|=\mu_i$ for each unit vector $v_i \in \sigma_i$.  There is no loss in generality in assuming $\mu_1 \leq \mu_2$ and if equality $\mu_1=\mu_2$ holds, then $\lambda_1 \leq \lambda_2$.


For a unit vector $v \in S_pM$, let $\lambda(v)=\sec(v,A_pv)$.  Then $A_pv$ is an eigenvector of the Jacobi operator $\mathcal{J}_v$ with eigenvalue $\lambda(v)>1$. Note that $\lambda(v)$ is the maximal curvature of a $2$-plane section containing the vector $v$.  Therefore, $\lambda(A_pv/\|A_pv\|)\geq \lambda(v)$ with equality only if $A_p^2v$ and $v$ are linearly dependent.  For a vector $v_i \in \sigma_i$, let $\bar{v}_i=A(v_i)/\mu_i$.  With this notation, $\bar{\bar{v_i}}=-v_i$.

\begin{lemma}\label{c0}
Assume that $\{u,v,w\}\subset V$ are orthonormal vectors with $u,v \in \sigma_i$ and $w \in \sigma_j$ with $i\neq j \in \{1,2\}$.  Then $R(u,v,w,u)=0$ and $R(u,w,w,u)=1$.  
\end{lemma}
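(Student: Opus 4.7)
The plan is to recognize that the main content of both identities is a single observation: under the hypotheses, the vector $w$ lies in the $1$-eigenspace of the Jacobi operator $\mathcal{J}_u$. Once this is in place, both identities drop out from short manipulations with the standard curvature symmetries.

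First I would verify that $w \in E_u = \{u, A_p u\}^\perp$. Since $\sigma_i$ is $2$-dimensional and $A_p$-invariant, the vector $A_p u$ must lie in $\sigma_i$. Because $w \in \sigma_j$ and $\sigma_i \perp \sigma_j$, we get $w \perp A_p u$, and $w \perp u$ holds by hypothesis. Since the paper has identified $E_u$ with $\{u, A_p u\}^\perp$ at nonisotropic points, this places $w$ in the $1$-eigenspace of $\mathcal{J}_u$, so that $R(w,u)u = w$.

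From this, the identity $R(u,w,w,u) = 1$ is immediate: using the curvature symmetry $R(X,Y,Z,W) = R(Z,W,X,Y)$, one has
\begin{equation*}
R(u,w,w,u) = R(w,u,u,w) = \langle R(w,u)u, w\rangle = \langle w, w\rangle = 1.
\end{equation*}
For $R(u,v,w,u) = 0$, the point is that $v$ is orthogonal to $w$ (they are distinct members of the orthonormal triple $\{u,v,w\}$). Therefore $\langle R(w,u)u, v\rangle = \langle w, v\rangle = 0$, and the same symmetry $R(X,Y,Z,W) = R(Z,W,X,Y)$ rewrites this expression as $R(u,v,w,u)$.

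There is no real obstacle here; the whole content lies in the structural observation that the two invariant planes $\sigma_i, \sigma_j$ are arranged so that every unit vector in $\sigma_j$ automatically sits in the $1$-eigenspace of the Jacobi operator of every unit vector in $\sigma_i$. This makes the lemma a pointwise curvature-tensor identity rather than anything requiring the global rank or flow analysis of the previous sections.
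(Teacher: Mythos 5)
Your proposal is correct and follows the paper's own argument: the paper likewise observes that $A_pu\in\sigma_i$ by invariance, so the orthogonal plane $\sigma_j$ lies in $E_u=\{u,A_pu\}^{\perp}$, and the two identities then follow from $\mathcal{J}_u(w)=w$ together with the standard curvature symmetries. Your write-up simply makes explicit the short symmetry manipulations the paper leaves implicit.
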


\begin{proof}
As $u\in \sigma_i$, an $A_p$-invariant $2$-plane, the orthogonal $2$-plane $\sigma_j$ is contained in $E_u$.  In particular, $w \in E_u$, implying the lemma.
\end{proof}


\begin{lemma}\label{c1}
Let $v_i \in \sigma_i$, $i=1,2$, be unit-vectors.  If $v=a v_1+b v_2$ is a unit-vector, then $\lambda(v)=a^2 \lambda_1+b^2 \lambda_2$.
\end{lemma}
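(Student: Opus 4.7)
The plan is to expand $\lambda(v) = \sec(v, A_p v) = R(v, A_p v, A_p v, v)/\|A_p v\|^2$ directly; this formula holds since $A_p$ is skew-symmetric (so $v \perp A_p v$) and $v$ is unit. Writing $\bar v_i := A_p v_i/\mu_i$ gives $A_p v = a\mu_1 \bar v_1 + b\mu_2 \bar v_2$ and $\|A_p v\|^2 = a^2\mu_1^2 + b^2\mu_2^2$, so it suffices to prove the polynomial identity
\[
R(v, A_p v, A_p v, v) = (a^2\lambda_1 + b^2\lambda_2)(a^2\mu_1^2 + b^2\mu_2^2)
\]
in $(a,b)$.

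First I would expand the left side multilinearly in the orthonormal basis $\{v_1, \bar v_1, v_2, \bar v_2\}$ of $V$, producing a biquadratic polynomial. The $a^4$ and $b^4$ coefficients are $\lambda_1\mu_1^2$ and $\lambda_2\mu_2^2$ by definition of $\lambda_i$, matching the desired expansion. Each $a^3 b$ or $a b^3$ contribution reduces, after the pair and slot symmetries of $R$, to a curvature value $R(u, v', w, u)$ with $u, v' \in \sigma_i$ and $w \in \sigma_j$, which vanishes by Lemma \ref{c0}; so all odd-parity coefficients are zero.

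For the $a^2 b^2$ coefficient, Lemma \ref{c0} yields the ``diagonal'' contribution $\mu_1^2 + \mu_2^2$ coming from $R(v_1, \bar v_2, \bar v_2, v_1) = R(v_2, \bar v_1, \bar v_1, v_2) = 1$. The remaining ``cross'' contributions involve curvature components such as $R(v_1, \bar v_1, \bar v_2, v_2)$ and $R(v_1, \bar v_2, \bar v_1, v_2)$, which are not directly controlled by Lemma \ref{c0}. I would handle these by combining the first Bianchi identity with the polarization of $\mathcal{J}_u|_{\sigma_j} = \Id_{\sigma_j}$ (valid for every unit $u \in \sigma_i$ with $i \ne j$, since $\sigma_j \subset E_u$ by Lemma \ref{c0}). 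A short computation then reduces each cross component to an explicit scalar multiple of the single quantity $r := R(v_1, v_2, \bar v_1, \bar v_2)$, giving for instance $R(v_1, \bar v_1, \bar v_2, v_2) = -2r$ and $R(v_1, \bar v_2, \bar v_1, v_2) = -r$.

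The hard part will then be to pin down the value of $r$ itself. The plan is to invoke the K\"ahler curvature identities~(\ref{ghost}), which impose extra symmetries on $R$ restricted to $V$ and force $r$ to equal exactly the value that makes the $a^2 b^2$ coefficient equal to $\lambda_1\mu_2^2 + \lambda_2\mu_1^2$. Once this step is in place the polynomial identity holds, and dividing through by $\|A_p v\|^2$ yields $\lambda(v) = a^2\lambda_1 + b^2\lambda_2$.
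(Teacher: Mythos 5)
Your multilinear expansion of $R(v,A_pv,A_pv,v)$, the vanishing of the odd-parity coefficients via Lemma \ref{c0}, and even the reduction of the two cross terms by first Bianchi plus polarization of $\mathcal{J}_u|_{\sigma_j}=\Id_{\sigma_j}$ are all fine (the latter amounts to $\beta=\gamma$ and $\alpha=-2\gamma$ in the notation the paper introduces just after this lemma). The genuine gap is the last step: the K\"ahler identities (\ref{ghost}) cannot pin down $r=R(v_1,v_2,\bar v_1,\bar v_2)$ at this point. Those identities involve $J_p$, and nothing is yet known about how $J_p$ interacts with $A_p$ or with the planes $\sigma_1,\sigma_2$; that interaction is established only later (Proposition \ref{k5}, Corollary \ref{k7}, Lemma \ref{k6}), and those results are themselves proved using Lemma \ref{c1}, so invoking them here is circular. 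Worse, the relation you actually need, namely that the $a^2b^2$-coefficient equals $\lambda_1\mu_2^2+\lambda_2\mu_1^2$, i.e.\ $6\mu_1\mu_2\gamma=\mu_1^2(\lambda_2-1)+\mu_2^2(\lambda_1-1)$, ties $r$ to the $A_p$-data $\lambda_i,\mu_i$, which the $J$-symmetries of $R$ alone cannot see; even in the fully aligned situation of Lemma \ref{k6} the K\"ahler identities only yield $\gamma=1$, not this relation. Note also that the lemma is later applied (e.g.\ in the proof of Proposition \ref{k5}) precisely in a case where the compatibility of $J_p$ with $\sigma_1,\sigma_2$ is what is being contradicted, so a proof of Lemma \ref{c1} that presupposes such compatibility would not serve.

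The missing idea is that you never need the individual value of $r$: apply the rank hypothesis to the mixed vector $v$ itself. Since $E_v=D_v=\spn\{v,A_pv\}^{\perp}$ is the $1$-eigenspace of $\mathcal{J}_v$, the vector $w=b\mu_2\bar v_1-a\mu_1\bar v_2$ is orthogonal to both $v$ and $A_pv=a\mu_1\bar v_1+b\mu_2\bar v_2$, hence $\sec(v,w)=1$, i.e.\ $R(v,w,w,v)=a^2\mu_1^2+b^2\mu_2^2$. Expanding this with Lemma \ref{c0} produces a second polynomial identity containing exactly the same combination of cross terms $\Phi=2a^2b^2\mu_1\mu_2[R(v_1,\bar v_1,\bar v_2,v_2)+R(v_1,\bar v_2,\bar v_1,v_2)]$; substituting it back into your first expansion eliminates $\Phi$ and yields $\lambda(v)=a^2\lambda_1+b^2\lambda_2$ with no K\"ahler input at all. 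This is the paper's argument, and it is purely Riemannian, resting only on the eigenspace structure coming from the rank assumption.
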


\begin{proof}
Observe that $\lambda(v)\|A_p v\|^2=R(v,A_p v,A_p v,v)$ or equivalently
 $$\lambda(v)(a^2 \mu_1^2+b^2 \mu_2^2)=R(a v_1 +b v_2,a \mu_1 \bar{v}_1+b \mu_2 \bar{v}_2 , a \mu_1 \bar{v}_1+b \mu_2 \bar{v}_2,a v_1 +b v_2).$$  Expanding the above, using Lemma \ref{c0}, and simplifying yields

\begin{equation}\label{1}
\lambda(v)(a^2\mu_1^2+b^2\mu_2^2)=a^4\mu_1^2\lambda_1+a^2b^2(\mu_1^2+\mu_2^2)+b^4\mu_2^2\lambda_2+\Phi
\end{equation} 
where

\begin{equation}\label{2}
\Phi=2a^2b^2\mu_1\mu_2[R(v_1,\bar{v}_1,\bar{v}_2,v_2)+R(v_1,\bar{v}_2,\bar{v}_1,v_2)].
\end{equation}

The vector $w:=b\mu_2\bar{v}_1-a\mu_1\bar{v}_2$ is orthogonal to both $v$ and $A_p v$ so that $1=\sec(v,w)$. Equivalently $$(a^2\mu_1^2+b^2\mu_2^2)=R(a v_1+b v_2, b\mu_2\bar{v}_1-a\mu_1\bar{v}_2, b\mu_2\bar{v}_1-a\mu_1\bar{v}_2,a v_1+b v_2).$$  Expanding the above, using Lemma \ref{c0}, and simplifying yields

\begin{equation}\label{3}
\Phi=a^2b^2(\mu_2^2\lambda_1+\mu_1^2\lambda_2)+a^2\mu_1^2(a^2-1)+b^2\mu_2^2(b^2-1).
\end{equation}

Substituting (\ref{3}) into (\ref{1}) and simplifying using $a^2+b^2=1$ yields the desired formula for $\lambda(v)$.
\end{proof}

\begin{corollary}\label{c11}
If $\mu_1<\mu_2$, then $\lambda_1<\lambda_2$.
\end{corollary}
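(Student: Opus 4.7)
\medskip

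\noindent\textbf{Proof proposal for Corollary \ref{c11}.} The plan is to extract a comparison between $\lambda_1$ and $\lambda_2$ out of the formula in Lemma \ref{c1} by exploiting the fact that the curvature of an $A_p$-invariant plane section does not decrease as one rotates the directing vector under $A_p$. Concretely, for any unit vectors $v_i\in\sigma_i$ and any $a,b\in\R$ with $a^2+b^2=1$, the vector $v=av_1+bv_2$ is a unit vector, and its image $A_pv=a\mu_1\bar v_1+b\mu_2\bar v_2$ has norm $\|A_pv\|^2=a^2\mu_1^2+b^2\mu_2^2$. Since $\bar v_i\in\sigma_i$ is a unit vector in an $A_p$-invariant $2$-plane on which $A_p^2=-\mu_i^2\,\Id$, one has $\lambda(\bar v_i)=\lambda_i$. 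Setting $u=A_pv/\|A_pv\|=\alpha\bar v_1+\beta\bar v_2$ with $\alpha^2=a^2\mu_1^2/\|A_pv\|^2$ and $\beta^2=b^2\mu_2^2/\|A_pv\|^2$, Lemma \ref{c1} applied twice yields
\begin{equation*}
\lambda(v)=a^2\lambda_1+b^2\lambda_2,\qquad \lambda(u)=\alpha^2\lambda_1+\beta^2\lambda_2.
\end{equation*}

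\medskip

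The next step is to invoke the maximality observation recorded before Lemma \ref{c0}: $\lambda(u)\ge\lambda(v)$, with equality only when $A_p^2v$ and $v$ are linearly dependent. Using $\alpha^2+\beta^2=a^2+b^2=1$, a short computation gives
\begin{equation*}
\lambda(u)-\lambda(v)=(\alpha^2-a^2)(\lambda_1-\lambda_2)=\frac{a^2b^2(\mu_1^2-\mu_2^2)}{a^2\mu_1^2+b^2\mu_2^2}(\lambda_1-\lambda_2).
\end{equation*}
For $a,b\ne0$ and $\mu_1<\mu_2$ the leading coefficient is strictly negative, so $\lambda(u)\ge\lambda(v)$ forces $\lambda_1\le\lambda_2$.

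\medskip

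To upgrade this to strict inequality, suppose $\lambda_1=\lambda_2$. Then equality holds in $\lambda(u)\ge\lambda(v)$, hence $A_p^2v$ is a scalar multiple of $v$. Computing $A_p^2v=-a\mu_1^2v_1-b\mu_2^2v_2$ and comparing with $v=av_1+bv_2$ (for some chosen $a,b\ne0$) gives $\mu_1^2=\mu_2^2$, contradicting $\mu_1<\mu_2$. Therefore $\lambda_1<\lambda_2$.

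\medskip

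The plan is essentially algebraic; no step looks genuinely hard, but the one to execute carefully is the identification $\lambda(\bar v_i)=\lambda_i$ (which uses that $\sigma_i$ is $A_p$-invariant and that $A_p^2|_{\sigma_i}=-\mu_i^2\,\Id$), since without it the second application of Lemma \ref{c1} to $u$ would not be legitimate.
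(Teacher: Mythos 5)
Your proof is correct and is essentially the paper's own argument: both apply Lemma \ref{c1} to $v=av_1+bv_2$ and to $A_pv/\|A_pv\|$ and use the observation preceding Lemma \ref{c0} that $\lambda(A_pv/\|A_pv\|)\geq\lambda(v)$ with equality only when $v$ and $A_p^2v$ are linearly dependent. The only cosmetic difference is that the paper fixes $a=b=\sqrt{2}/2$ and gets the strict inequality in one step, whereas you keep general $a,b$ and handle the equality case separately.
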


\begin{proof}
In the notation of Lemma \ref{c1}, choose the vector $v$ so that $a=b=\sqrt{2}/2$.  As $\mu_1<\mu_2$, the vectors $v=a v_1+b v_2$ and $A_p^2 v=-(a \mu_1^2 v_1+b \mu_2^2 v_2)$ are linearly independent.  Therefore $\lambda(v)=\sec(v,A_pv)<\sec(A_pv,A_p^2v)=\lambda(A_pv/\|A_pv\|)$.  By Lemma \ref{c1}, $\frac{1}{2}\lambda_1+\frac{1}{2}\lambda_2< \frac{\mu_1^2}{\mu_1^2+\mu_2^2}\lambda_1+\frac{\mu_2^2}{\mu_1^2+\mu_2^2}\lambda_2$, or equivalently, $(\frac{1}{2}-\frac{\mu_1^2}{\mu_1^2+\mu_2^2})\lambda_1<(\frac{\mu_2^2}{\mu_1^2+\mu_2^2}-\frac{1}{2})\lambda_2$.  If $\lambda_2 \leq \lambda_1$, it follows that  $\frac{1}{2}-\frac{\mu_1^2}{\mu_1^2+\mu_2^2}<\frac{\mu_2^2}{\mu_1^2+\mu_2^2}-\frac{1}{2}$, a contradiction.
\end{proof}

Given unit vectors $e_i \in \sigma_i$, $i=1,2$, consider the following components of the curvature tensor: $\alpha=R(e_1,\bar{e}_1,e_{2},\bar{e}_2)$, $b=R(\bar{e}_1,e_2,e_1,\bar{e}_2)$, and $\gamma=R(e_2,e_1,\bar{e}_1,\bar{e}_2)$.  By the Bianchi identity,
\begin{equation}\label{bianchi}
\alpha+\beta+\gamma=0.
\end{equation}










\begin{lemma}\label{c2}
In the notation above, $\beta=\gamma>0$, $\alpha=-2\gamma<0$, and $\mu_1^2(\lambda_2-1)+\mu_2^2(\lambda_1-1)=6\mu_1\mu_2\gamma$.  Moreover, $(\lambda_1-1)(\lambda_2-1) \leq 9\gamma^2$.
\end{lemma}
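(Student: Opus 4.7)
The plan is to extract a second linear relation among $\alpha$, $\beta$, $\gamma$ by re-running the argument of Lemma~\ref{c1} with the orthonormal basis $(v_2,\bar v_2)=(e_2,\bar e_2)$ of $\sigma_2$ replaced by $(\bar e_2,-e_2)$. Together with the original relation this will force $\beta=\gamma$, after which the Bianchi identity (\ref{bianchi}) and AM--GM will finish the proof.

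First, I revisit the proof of Lemma~\ref{c1}. Using the symmetries (\ref{symmetry}), the quantity $\Phi$ defined by (\ref{2}) rewrites as
\[
\Phi = 2a^2b^2\mu_1\mu_2(\beta-\alpha),
\]
since $R(e_1,\bar e_1,\bar e_2,e_2)=-\alpha$ and $R(e_1,\bar e_2,\bar e_1,e_2)=\beta$. Comparing with (\ref{3}) and using $a^2+b^2=1$ yields
\[
2\mu_1\mu_2(\beta-\alpha)=\mu_2^2(\lambda_1-1)+\mu_1^2(\lambda_2-1).
\]

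Second, I apply Lemma~\ref{c1} with $v_2$ replaced by $\bar e_2$. The pair $(\bar e_2,-e_2)$ is of the form $(v_2,\bar v_2)$ (since $A_p\bar e_2/\mu_2=-e_2$) and spans $\sigma_2$ with the same $\mu_2,\lambda_2$, so the identity $\lambda(v)=a^2\lambda_1+b^2\lambda_2$ still holds. Running the same computation verbatim, the analog of $\Phi$ becomes $2a^2b^2\mu_1\mu_2(\gamma-\alpha)$ after identifying $R(e_1,\bar e_1,-e_2,\bar e_2)=-\alpha$ and $R(e_1,-e_2,\bar e_1,\bar e_2)=\gamma$ via the curvature symmetries, while the right-hand side (\ref{3}) is unchanged. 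This gives
\[
2\mu_1\mu_2(\gamma-\alpha)=\mu_2^2(\lambda_1-1)+\mu_1^2(\lambda_2-1),
\]
whence $\beta=\gamma$. The Bianchi identity then forces $\alpha=-2\gamma$, and substituting $\beta-\alpha=3\gamma$ into the first relation produces $\mu_1^2(\lambda_2-1)+\mu_2^2(\lambda_1-1)=6\mu_1\mu_2\gamma$. Since each $\lambda_i>1$ (as $A_pv$ is an eigenvector of $\mathcal J_v$ with eigenvalue strictly greater than $1$), the left-hand side is strictly positive, forcing $\gamma>0$, hence $\beta=\gamma>0$ and $\alpha=-2\gamma<0$.

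For the final inequality, the AM--GM inequality applied to the left-hand side gives
\[
\mu_1^2(\lambda_2-1)+\mu_2^2(\lambda_1-1)\;\geq\;2\mu_1\mu_2\sqrt{(\lambda_1-1)(\lambda_2-1)},
\]
so $6\mu_1\mu_2\gamma\geq 2\mu_1\mu_2\sqrt{(\lambda_1-1)(\lambda_2-1)}$, and squaring yields $(\lambda_1-1)(\lambda_2-1)\leq 9\gamma^2$. The only delicate step is the sign bookkeeping when translating $R(e_1,\bar e_1,-e_2,\bar e_2)$ and $R(e_1,-e_2,\bar e_1,\bar e_2)$ back into $\alpha$ and $\gamma$ via the antisymmetry and pair-swap identities; otherwise the argument is essentially mechanical, recycling the machinery already set up for Lemma~\ref{c1}.
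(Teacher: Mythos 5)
Your proposal is correct and follows essentially the same route as the paper: the paper likewise applies identities (\ref{2}) and (\ref{3}) first with $(v_1,v_2)=(e_1,e_2)$ and then with $(v_1,v_2)=(e_1,\bar e_2)$ to get the two relations $\mu_1^2(\lambda_2-1)+\mu_2^2(\lambda_1-1)=2\mu_1\mu_2(\beta-\alpha)=2\mu_1\mu_2(\gamma-\alpha)$, concludes $\beta=\gamma$, and invokes the Bianchi identity, with the final inequality following (as in your AM--GM step) from positivity of the terms. Your sign bookkeeping for $R(e_1,\bar e_1,-e_2,\bar e_2)=-\alpha$ and $R(e_1,-e_2,\bar e_1,\bar e_2)=\gamma$ checks out, so no gaps.
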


\begin{proof}
Set $v_1=e_1$, $v_2=e_2$, and $a=b=\frac{\sqrt{2}}{2}$ and use (\ref{2}) and (\ref{3}) to deduce
\begin{equation}\label{4}
\mu_1^2(\lambda_2-1)+\mu_2^2(\lambda_1-1)=2\mu_1\mu_2(\beta-\alpha).
\end{equation}
after simplification.

Similarly, set $v_1=e_1$, $v_2=\bar{e}_2$, and $a=b=\frac{\sqrt{2}}{2}$ and use (\ref{2}) and (\ref{3}) to deduce
\begin{equation}\label{6}
\mu_1^2(\lambda_2-1)+\mu_2^2(\lambda_1-1)=2\mu_1\mu_2(\gamma-\alpha).
\end{equation}
As $\mu_i>0$ and $\lambda_i>1$, (\ref{4}) and (\ref{6}) imply that $\beta=\gamma$.  By (\ref{bianchi}) $\alpha=-2\gamma$ which upon substitution into (\ref{6}) yields $$\mu_1^2(\lambda_2-1)+\mu_2^2(\lambda_1-1)=6\mu_1\mu_2\gamma$$ from which the remaining inequalities are easily deduced.
\end{proof}

\begin{lemma}\label{c3}
In the notation above, $\lambda_1 \leq 3 \gamma+1\leq \lambda_2$. Equality holds in either case only if $\lambda_1=\lambda_2=3\gamma+1$ and $\mu_1=\mu_2.$
\end{lemma}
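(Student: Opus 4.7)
The cornerstone is the identity from Lemma~\ref{c2},
\[
\mu_1^2(\lambda_2 - 1) + \mu_2^2(\lambda_1 - 1) = 6\mu_1\mu_2\gamma,
\]
together with the ordering $\lambda_1 \leq \lambda_2$, which holds in all cases: it is the convention when $\mu_1 = \mu_2$ and follows from Corollary~\ref{c11} when $\mu_1 < \mu_2$.

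For $\lambda_1 \leq 3\gamma + 1$, I would replace the larger quantity $\lambda_2 - 1$ on the left-hand side by the smaller $\lambda_1 - 1$ to obtain
\[
(\mu_1^2 + \mu_2^2)(\lambda_1 - 1) \;\leq\; \mu_1^2(\lambda_2 - 1) + \mu_2^2(\lambda_1 - 1) \;=\; 6\mu_1\mu_2\gamma,
\]
and then apply the AM--GM inequality $\mu_1^2 + \mu_2^2 \geq 2\mu_1\mu_2$ to conclude $\lambda_1 - 1 \leq 3\gamma$. The equality case is transparent from this chain: equality $\lambda_1 = 3\gamma + 1$ forces equality in the substitution (so $\lambda_1 = \lambda_2$) and in AM--GM (so $\mu_1 = \mu_2$), yielding $\lambda_1 = \lambda_2 = 3\gamma + 1$ and $\mu_1 = \mu_2$.

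For $3\gamma + 1 \leq \lambda_2$, the dual substitution (replacing $\lambda_1 - 1$ by $\lambda_2 - 1$) only yields $(\mu_1^2 + \mu_2^2)(\lambda_2 - 1) \geq 6\mu_1\mu_2\gamma$, so that AM--GM now points the wrong way and provides only the weaker bound $\lambda_2 - 1 \geq 6\mu_1\mu_2\gamma/(\mu_1^2 + \mu_2^2)$, which is itself $\leq 3\gamma$. Closing this gap is the main obstacle of the lemma. My plan is to combine the identity with the product bound $(\lambda_1 - 1)(\lambda_2 - 1) \leq 9\gamma^2$ from Lemma~\ref{c2}---by a short algebraic manipulation one can eliminate the $\mu_i$ between these two constraints and obtain a direct quadratic relation between $\lambda_2 - 1$ and $3\gamma$, so that the ordering $\lambda_1 \leq \lambda_2$ pins $\lambda_2 - 1$ on the correct side of the root $3\gamma$. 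If the bare algebraic manipulation stalls, I would invoke the K\"ahler symmetries~(\ref{ghost}) and Berger's expressions for the curvature tensor on orthonormal $4$-frames, applied to the $4$-plane $V = \sigma_1 \oplus \sigma_2$, to refine the relation between $\alpha,\beta,\gamma$ and the $\mu_i$ beyond what Lemma~\ref{c2} captures.

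For the equality case in the second inequality, the argument runs parallel to the first: if $\lambda_2 = 3\gamma + 1$, then equality must hold at every step of the sharpened chain, which in particular forces equality in the underlying AM--GM and hence $\mu_1 = \mu_2$; substituting back into the identity with $\mu_1 = \mu_2$ gives $(\lambda_1 - 1) + (\lambda_2 - 1) = 6\gamma$, so together with $\lambda_2 = 3\gamma + 1$ one obtains $\lambda_1 = \lambda_2 = 3\gamma + 1$. Combined with the first equality analysis, this yields the stated rigidity in either case.
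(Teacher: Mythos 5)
Your treatment of the first inequality $\lambda_1 \leq 3\gamma+1$ and of its equality case is correct, and it is essentially the paper's argument in different packaging: the paper deduces it from the product bound $(\lambda_1-1)(\lambda_2-1)\leq 9\gamma^2$ of Lemma~\ref{c2} (itself AM--GM applied to the identity), while you apply AM--GM directly; both hinge on the ordering $\lambda_1\leq\lambda_2$.

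The second inequality is where the real content lies, and your primary plan for it cannot work. The constraints you propose to manipulate --- the identity $\mu_1^2(\lambda_2-1)+\mu_2^2(\lambda_1-1)=6\mu_1\mu_2\gamma$, the product bound $(\lambda_1-1)(\lambda_2-1)\leq 9\gamma^2$ (which is a formal consequence of the identity, so it adds nothing new), the ordering $\lambda_1\leq\lambda_2$, and even Corollary~\ref{c11} --- are simultaneously satisfiable with $\lambda_2<3\gamma+1$: take, say, $\gamma=1$, $\lambda_1-1=1$, $\lambda_2-1=2$, $\mu_1=1$, $\mu_2=3+\sqrt{7}$. Hence no elimination of the $\mu_i$ from these relations can yield $\lambda_2\geq 3\gamma+1$; genuinely new curvature input is required. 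That input is exactly what the paper uses: by the Berger--Karcher polarization identity, $2\gamma=-\alpha=R(\bar e_1,e_1,e_2,\bar e_2)$ equals $\tfrac16$ times a signed sum of eight sectional curvatures of $2$-planes inside $V=\sigma_1\oplus\sigma_2$, and since every $2$-plane in $V$ satisfies $1\leq\sec\leq\lambda_2$ (the upper bound coming from Lemma~\ref{c1}), one obtains $2\gamma\leq\tfrac23(\lambda_2-1)$, i.e.\ $\lambda_2\geq 3\gamma+1$. Your fallback sentence gestures at precisely this tool but does not carry it out, and your equality analysis for this case rests on the nonexistent ``sharpened chain'': in the paper, equality forces $\sec(\bar e_1+\bar e_2,e_1+e_2)=\lambda_2$, and then Lemma~\ref{c1} gives $\lambda_2\leq\lambda\bigl((e_1+e_2)/\sqrt2\bigr)=\tfrac12(\lambda_1+\lambda_2)\leq\lambda_2$, whence $\lambda_1=\lambda_2$ and $\mu_1=\mu_2$ by Corollary~\ref{c11}. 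As written, the proposal establishes only half of the lemma.
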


\begin{proof}
If $\lambda_1 \geq 3 \gamma+1$, then $9\gamma^2\leq (\lambda_1-1)^2\leq (\lambda_1-1)(\lambda_2-1)\leq 9\gamma^2$, implying that $\lambda_1=\lambda_2 =3 \gamma +1$  (and $\mu_1=\mu_2$ by Corollary \ref{c11}).  
Lemma \ref{c2} and the derivation of Berger's curvature inequality \cite{ber2,ka} imply
\begin{align}\label{long}
&2\gamma=-\alpha= R(\bar{e}_1,e_1,e_2,\bar{e}_2)=\nonumber\\
&\frac{1}{6}[\sec(\bar{e}_1+\bar{e}_2,e_1+e_2) +\sec(e_1+\bar{e}_2,\bar{e}_1-e_2)]\nonumber\\
&+\frac{1}{6}[\sec(\bar{e}_1-\bar{e}_2,e_1-e_2)+\sec(e_1-\bar{e}_2,\bar{e}_1+e_2)]\nonumber\\
&-\frac{1}{6}[\sec(\bar{e}_1-\bar{e}_2,e_1+e_2)+\sec(e_1-\bar{e}_2,\bar{e}_1-e_2)]\nonumber\\
&-\frac{1}{6}[\sec(\bar{e}_1+\bar{e}_2,e_1-e_2)+\sec(e_1+\bar{e}_2,\bar{e}_1+e_2)].\nonumber
\end{align}


If $\sigma\subset V=\sigma_1\oplus \sigma_2$ is a $2$-plane section and $v \in \sigma$ is a unit vector, then $\sec(\sigma) \leq \lambda(v) \leq \lambda_2$ where the last inequality is a consequence of Lemma \ref{c1}.  Hence $1 \leq \sec \leq \lambda_2$ on $V$.  These inequalities and the above formula for $2 \gamma$ yields the inequality $\lambda_2 \geq 3 \gamma+1$ where equality holds only if $\sec(\bar{e}_1+\bar{e}_2,e_1+e_2)=\lambda_2$.  Hence, equality holds only if $\lambda_2=\sec(\bar{e}_1+\bar{e}_2,e_1+e_2)\leq \lambda((e_1+e_2)/\sqrt{2})=\frac{1}{2}(\lambda_1+\lambda_2)\leq \lambda_2$, or equivalently if $\lambda_1=\lambda_2$ (and $\mu_1=\mu_2$ by Corollary \ref{c11}). 
\end{proof}

\begin{lemma}\label{k2}
For each nonzero vector $v \in T_pM$, there exists $c(v) \in \R \setminus \{0\}$ such that $A_p J_p v=c(v) J_p A_p v$.
\end{lemma}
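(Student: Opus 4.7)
My plan is to exploit the Kähler curvature identities together with the already-established identification $E_v = \{v, A_p v\}^{\perp}$ to show that $J_p$ conjugates $A_p$ into itself up to scale. The operator $A_p$ captures the unique direction in $v^{\perp}$ complementary to the $1$-eigenspace of $\mathcal{J}_v$, while $J_p$ is a parallel isometry that commutes with the curvature tensor in the Kähler sense; the statement is then really the assertion that these two structures on $T_pM$ are compatible.

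First I would show that $J_p$ preserves the eigenspace distribution, in the sense that $J_p(E_v) = E_{J_p v}$ for every unit vector $v \in S_pM$. The Kähler identity
\begin{equation*}
R(X,Y,Z,W) = R(J_pX, J_pY, J_pZ, J_pW)
\end{equation*}
together with the fact that $J_p$ is a linear isometry gives $\sec(J_p v, J_p w) = \sec(v, w)$ for any orthonormal pair $v, w$. Hence $w \in E_v$ (equivalently $w \perp v$ and $\sec(v, w) = 1$) implies $J_p w \in E_{J_p v}$, and dimension count (or symmetry under $J_p^{-1} = -J_p$) upgrades the inclusion to the equality $J_p(E_v) = E_{J_p v}$.

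Next, taking orthogonal complements and using that $J_p$ is an isometry, the equality $J_p(E_v) = E_{J_p v}$ translates into
\begin{equation*}
\operatorname{span}\{J_p v, J_p A_p v\} = J_p\bigl(\operatorname{span}\{v, A_p v\}\bigr) = \operatorname{span}\{J_p v, A_p J_p v\}.
\end{equation*}
Both $J_p A_p v$ and $A_p J_p v$ lie in this common $2$-plane and are orthogonal to $J_p v$: the former because $A_p v \perp v$ and $J_p$ preserves the inner product, the latter because $A_p$ is skew-symmetric. The $1$-dimensional subspace of this $2$-plane orthogonal to $J_p v$ therefore contains both vectors, so they are linearly dependent.

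Finally, since the projective class $[A_p]$ was chosen to consist of \emph{nonsingular} skew-symmetric endomorphisms (Theorem \ref{Amap}), both $A_p J_p v$ and $J_p A_p v$ are nonzero whenever $v \neq 0$. Thus there is a unique scalar $c(v) \in \mathbb{R} \setminus \{0\}$ with $A_p J_p v = c(v) J_p A_p v$, as desired. The only subtlety I anticipate is bookkeeping: $A_p$ is only defined up to a nonzero scalar, but the scalar $c(v)$ is clearly independent of the chosen representative since replacing $A_p$ by $rA_p$ rescales both sides by the same factor, so the statement is intrinsic.
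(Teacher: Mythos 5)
Your proposal is correct and follows essentially the same route as the paper: use the K\"ahler identity (\ref{ghost}) together with Lemma \ref{crit} to get $J_p(E_v)=E_{J_pv}$, then observe that $J_pA_pv$ and $A_pJ_pv$ both lie in the one-dimensional orthogonal complement of $\spn\{J_pv,E_{J_pv}\}$ (you phrase this dually inside the $2$-plane $\spn\{J_pv,A_pJ_pv\}$), and nonsingularity of $A_p$ gives $c(v)\neq 0$. No gaps; the scaling remark at the end is a fine extra check.
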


\begin{proof}
Let $v \in T_pM \setminus \{0\}$.  Lemma \ref{crit} and (\ref{ghost}) imply that $J_p(E_v)=E_{J_pv}$. Therefore 

\begin{align}
&\spn\{J_pv, E_{J_pv}\}=\spn\{J_pv,J_p(E_v)\}=\nonumber\\
&J_p(\spn\{v,E_v\})=J_p((A_pv)^{\perp})=J_p(A_pv)^{\perp}\nonumber,
\end{align}

 where the last equality uses the fact that $J_p$ acts orthogonally. Conclude that both the vectors $J_p A_p v$ and $A_p J_pv $ are perpendicular to the codimension one subspace $\spn\{J_p v, E_{J_p v}\}$, concluding the proof.
 \end{proof}

\begin{lemma}\label{k3}
Either $A_pJ_p=J_pA_p$ or $A_pJ_p=-J_pA_p$.
\end{lemma}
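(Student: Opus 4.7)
The plan is to leverage Lemma \ref{k2} and a short linear-algebra argument to first promote the pointwise scalar $c(v)$ to a global scalar $\lambda$, and then use the skew-symmetry of both $A_p$ and $J_p$ to pin down $\lambda = \pm 1$.

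First, I would observe that $J_p$ is skew-symmetric: since $J_p$ is orthogonal ($\g(J_pX,J_pY) = \g(X,Y)$) and $J_p^2 = -\Id$, we have $J_p^T = J_p^{-1} = -J_p$. Both $A_p$ and $J_p$ are invertible, hence so is $C := J_p A_p$. Lemma \ref{k2} says that for every nonzero $v$, $A_p J_p v = c(v)\, J_p A_p v$. Setting $B := A_p J_p$, this reads $B v = c(v)\, C v$ for every $v$. Substituting $v = C^{-1} w$, it becomes $(B C^{-1}) w = c(C^{-1} w)\, w$, so every nonzero $w \in T_pM$ is an eigenvector of $B C^{-1}$. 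A standard linear-algebra fact then forces $B C^{-1} = \lambda \Id$ for some $\lambda \in \R$, necessarily nonzero since $B$ is invertible. In other words,
\begin{equation*}
A_p J_p = \lambda\, J_p A_p.
\end{equation*}

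Next I would use transposes to determine $\lambda$. Since $A_p$ and $J_p$ are both skew-symmetric,
\begin{equation*}
(A_p J_p)^T = J_p^T A_p^T = (-J_p)(-A_p) = J_p A_p,
\end{equation*}
so the previous displayed equation becomes $A_p J_p = \lambda (A_p J_p)^T$. Transposing gives $(A_p J_p)^T = \lambda\, A_p J_p$, and substituting back yields $A_p J_p = \lambda^2 A_p J_p$. Since $A_p J_p$ is invertible (hence nonzero), $\lambda^2 = 1$, so $\lambda = \pm 1$, which is exactly the claim.

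There is no real obstacle here: the content of the lemma is extracted almost immediately once one notices that Lemma \ref{k2} forces every vector to be an eigenvector of the single operator $(A_p J_p)(J_p A_p)^{-1}$, with the skew-symmetry of both $J_p$ and $A_p$ then constraining the resulting scalar to square to one.
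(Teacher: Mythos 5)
Your proof is correct, and its first half coincides with the paper's: both promote the pointwise scalar $c(v)$ of Lemma \ref{k2} to a single constant by observing that every nonzero vector is an eigenvector of $(A_pJ_p)(J_pA_p)^{-1}$, which therefore is a nonzero multiple of the identity. Where you diverge is in pinning down that constant: the paper simply takes determinants in $A_pJ_p = cJ_pA_p$, getting $c^d=1$ and hence $c=\pm 1$ (the evenness of $d$ only explaining why $c=-1$ cannot be excluded), whereas you exploit the skew-symmetry of both $A_p$ (from Theorem \ref{Amap}) and $J_p$ (from compatibility together with $J_p^2=-\Id$) to write $(A_pJ_p)^{*}=J_pA_p$ and deduce $\lambda^2=1$ by taking adjoints twice. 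The determinant argument is slightly leaner in its hypotheses, needing only invertibility of the two products, while your adjoint argument uses more structure but makes no reference to the dimension; both are elementary and equally valid, so the difference is cosmetic rather than substantive. One small caveat: your ``transpose'' should be understood as the adjoint with respect to $\g_p$ (equivalently, the matrix transpose in a $\g_p$-orthonormal basis), since that is the sense in which $A_p$ and $J_p$ are skew-symmetric; with that reading every step goes through.
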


 \begin{proof}
As both $A_pJ_p$ and $J_pA_p$ are non-degenerate, Lemma \ref{k2} implies that there is a nonzero constant $c\in \R$ such that $A_pJ_p=cJ_pA_p$.  Taking the determinant yields $c^{d}=1$, whence $c=\pm1$ since $d$ is even.   \end{proof}
 


\begin{proposition}\label{k5}
$A_pJ_p=J_pA_p$. 
\end{proposition}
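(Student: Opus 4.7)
My plan is to rule out the minus sign in Lemma \ref{k3} by showing that the anticommutation $A_pJ_p = -J_pA_p$ would force $p$ to be an isotropic point, contradicting $M = \mathcal{O}$ (from Proposition \ref{noisotropic}).

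Assume the anticommutation. First I would verify that $J_pv$ lies in the $1$-eigenspace $E_v$ for every $v \in S_pM$. Since $J_p$ and $A_p$ are both skew-symmetric, two applications of skew-adjointness together with the identity $J_pA_p = -A_pJ_p$ give
\begin{equation*}
\g(J_pv, A_pv) = -\g(v, J_pA_pv) = \g(v, A_pJ_pv) = -\g(A_pv, J_pv) = -\g(J_pv, A_pv),
\end{equation*}
forcing $\g(J_pv, A_pv) = 0$. Combined with $\g(J_pv, v) = 0$, this places the unit vector $J_pv$ in $E_v = \spn\{v, A_pv\}^{\perp}$, and Lemma \ref{crit} then yields $\sec(v, J_pv) = 1$. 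Hence the holomorphic sectional curvature function $v \mapsto \sec(v, J_pv)$ on $S_pM$ is identically $1$.

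To conclude, I would invoke the classical Kähler polarization identity (a close relative of the Berger identity used in the derivation preceding Lemma \ref{c3}): at any point of a Kähler manifold the curvature tensor is recovered from the holomorphic sectional curvature function, and constant holomorphic curvature $H$ at $p$ yields
\begin{equation*}
\sec_p(v, w) = \tfrac{H}{4}\bigl(1 + 3\,\g(J_pv, w)^2\bigr)
\end{equation*}
for every orthonormal pair $v, w \in T_pM$. With $H = 1$ this forces $\sec_p(v, w) \leq 1$, with equality only when $w = \pm J_pv$. The hypothesis $\sec \geq 1$ then forces equality for every orthonormal pair, so every $2$-plane at $p$ has sectional curvature exactly $1$, meaning $p \in \mathcal{I}$. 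This contradicts $M = \mathcal{O}$, ruling out the minus sign.

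The main non-obvious ingredient is invoking the Kähler reconstruction of the curvature tensor at $p$ from a constant holomorphic sectional curvature; the rest is a direct manipulation using skew-adjointness of $A_p$ and $J_p$.
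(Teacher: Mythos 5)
Your proof is correct, but it takes a genuinely different route from the paper's. The paper fixes an $A_p$-invariant $2$-plane $\sigma_1$: if it is $J_p$-invariant the two maps commute there and Lemma \ref{k3} finishes; otherwise, assuming anticommutation, it builds the orthonormal frame $\{e_1,e_2,J_pe_1,J_pe_2\}$, shows $\sigma_1$ and $J_p\sigma_1$ are $A_p$-invariant planes of equal curvature $\lambda$ which is maximal on their span (Lemma \ref{c1}), and then combines the K\"ahler identity (\ref{ghost}) with Berger's inequality $R(e_1,e_2,e_4,e_3)\leq \tfrac{2}{3}(\lambda-1)$ to force $\lambda\leq -2$. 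You instead observe that anticommutation together with skew-symmetry of $A_p$ and $J_p$ gives $\g(J_pv,A_pv)=0$ for every $v$, so $J_pv\in E_v=\spn\{v,A_pv\}^{\perp}$ and the holomorphic sectional curvature at $p$ is identically $1$; the classical pointwise reconstruction of a K\"ahler curvature tensor from its holomorphic sectional curvature then gives $\sec_p(v,w)=\tfrac14(1+3\g(J_pv,w)^2)\leq 1$, which against $\sec\geq 1$ forces $p\in\mathcal{I}$ (indeed a totally real plane would have curvature $\tfrac14$, so the contradiction with $\sec\geq1$ is immediate and you need not even route through $\mathcal{I}$ and Proposition \ref{noisotropic}). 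Your argument is shorter, avoids the case analysis and Berger's inequality entirely, and the observation that anticommutation alone forces $J_pv\perp A_pv$ is a nice linear-algebraic shortcut; the cost is reliance on the constant-holomorphic-curvature formula (Kobayashi--Nomizu type), an external though entirely standard fact, whereas the paper's proof stays within the tools it has already assembled (Lemma \ref{c1} and the cited Berger estimate). One phrasing quibble: "forces equality for every orthonormal pair" sits awkwardly next to your own remark that equality holds only when $w=\pm J_pv$; the clean statement is that the two inequalities are incompatible as soon as $d\geq 4$, however you package the contradiction.
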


\begin{proof}
Let $\sigma_1$ be an $A_p$-invariant $2$-plane section.  If $\sigma_1$ is $J_p$-invariant, then the restriction of $A_p$ and $J_p$ to $\sigma_1$ differ by a scalar, hence commute, concluding the proof in this case by Lemma \ref{k3}.

Hence, if the proposition fails, then $A_pJ_p=-J_pA_p$ and  $\sigma_1$ is not invariant under $J_p$.  The following derives a contradiction.

Let $\{e_1,e_2\}$ be an orthonormal basis of $\sigma_1$.  There exists a nonzero constant $\mu$ such that $A_p e_1=\mu e_2$ and $A_p e_2=-\mu e_1$.  Rescale $A_p$ and replace $e_2$ with $-e_2$, if necessary, so that $\mu=1$.  If $A_p^{*}$ denotes the adjoint of $A_p$, then $A_p^{*}=-A_p$ on the subspace $\sigma_1$.

  As $J_p$ is orthogonal, $\{e_3=J_p e_1,e_4=J_p e_2\}$ is an orthonormal basis of $\sigma_2:=J_p(\sigma_1)$.  The following calculations will demonstrate that $\{e_1,e_2,e_3,e_4\}$ form an orthonormal $4$-frame. As $\g_p(e_1,e_3)=\g_p(e_1,J_p e_1)=0=\g_p(e_2,J_p e_2)=\g_p(e_2,e_4)$, it remains to verify the equalities $\g_p(e_1,e_4)=0=\g_p(e_2,e_3).$  Calculate
 \begin{align}
&\g_p(e_1,e_4)=\g_p(e_1,J_p e_2)=\g_p(e_1,J_pA_p e_1)=\nonumber \\
&\g_p(e_1,-A_pJ_p e_1)=\g_p(-A_p^{*} e_1,J_p e_1)=\nonumber\\
&\g_p(A_p e_1,J_pe_1)=\g_p(e_2,J_p e_1)=\nonumber \\
&\g_p(J_p e_2,-e_1)=-\g_p(e_4,e_1)\nonumber
\end{align}
to conclude that $\g_p(e_1,e_4)=0$.  Finally, 
\begin{equation}
\g_p(e_2,e_3)=\g_p(J_p e_2,J_p e_3)=\g_p(e_4,-e_1)=0\nonumber
\end{equation} concluding the proof that $\{e_1,e_2,e_3,e_4\}$ are orthonormal. Let $\lambda=\sec(\sigma_1)$ and note that 
$$\lambda=R(e_1,e_2,e_2,e_1)=R(J_p e_1,J_p e_2,J_p e_2,J_p e_1)=R(e_3,e_4,e_4,e_3)=\sec(\sigma_2).$$  As $A_p(\sigma_2)=A_pJ_p(\sigma_1)=-J_pA_p(\sigma_1)=-J_p(\sigma_1)=\sigma_2$, the $2$-plane $\sigma_2$ is $A_p$-invariant.  Lemma \ref{c1} implies that $\lambda$ is the maximum sectional curvature on the subspace $V=\sigma_1\oplus \sigma_2$.  By Berger's curvature inequality (\cite{ber2,ka}), $R(e_1,e_2,e_4,e_3) \leq \frac{2}{3}(\lambda-1)$.  Consequently,
$$\lambda=R(e_1,e_2,e_2,e_1)=R(e_1,e_2,J_p e_2,J_p e_1)=R(e_1,e_2,e_4,e_3) \leq \frac{2}{3}(\lambda-1),$$ or equivalently, $\lambda \leq -2$, a contradiction.
\end{proof}

\begin{lemma}\label{k6}
Assume that $V=\sigma_1 \oplus \sigma_2$ is an orthogonal sum of $A_p$-invariant  and $J_p$-invariant $2$-plane sections.  Let $v_i \in \sigma_i$ be unit vectors and $\bar{v}_i=A_pv_i/\mu_i$.  If $J_p v_1=\bar{v}_1$, then $J_p v_2=\bar{v}_2$.  If $J_p v_1=-\bar{v}_1$, then $J_p v_2=-\bar{v}_2$.  In both cases $\gamma=R(v_2,v_1,\bar{v}_1,\bar{v}_2)=1$.
\end{lemma}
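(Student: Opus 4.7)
The plan is to reduce everything to a one-line K\"ahler substitution that ties $\gamma$ to the curvature one sections supplied by Lemma \ref{c0}, with the sign constraint then coming from the positivity of $\gamma$ in Lemma \ref{c2}.

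First I would observe that because each $\sigma_i$ is a $J_p$-invariant $2$-plane and $J_p$ is an isometry with $J_p^2 = -\Id$, the vector $J_p v_i$ must be a unit vector in $\sigma_i$ orthogonal to $v_i$. Since $\bar{v}_i$ is itself such a unit vector and $\sigma_i$ is two-dimensional, there is a sign $\epsilon_i \in \{\pm 1\}$ with $J_p v_i = \epsilon_i \bar{v}_i$. The lemma then collapses to the two claims $\epsilon_1 = \epsilon_2$ and $\gamma = 1$.

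To obtain these, I would apply the K\"ahler identity $R(X,Y,Z,W) = R(J_pX, J_pY, Z, W)$ from (\ref{ghost}) directly to $\gamma = R(v_2, v_1, \bar{v}_1, \bar{v}_2)$, yielding
\begin{equation*}
\gamma \;=\; R(J_p v_2, J_p v_1, \bar{v}_1, \bar{v}_2) \;=\; \epsilon_1 \epsilon_2 \, R(\bar{v}_2, \bar{v}_1, \bar{v}_1, \bar{v}_2).
\end{equation*}
Since $\bar{v}_1 \in \sigma_1$ and $\bar{v}_2 \in \sigma_2$ are orthonormal and lie in distinct summands of the $A_p$-invariant decomposition $V = \sigma_1 \oplus \sigma_2$, Lemma \ref{c0} gives $\sec(\bar{v}_1,\bar{v}_2) = 1$, so $R(\bar{v}_2, \bar{v}_1, \bar{v}_1, \bar{v}_2) = 1$ and hence $\gamma = \epsilon_1 \epsilon_2$.

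The conclusion is immediate: Lemma \ref{c2} forces $\gamma > 0$, so $\epsilon_1 \epsilon_2 = +1$, i.e.\ $\epsilon_1 = \epsilon_2$, and $\gamma = 1$. There is no real obstacle here; the entire content of the argument is the recognition that the only remaining degree of freedom in $J_p|_{\sigma_i}$ is a sign, together with the observation that the K\"ahler symmetry converts that sign ambiguity directly into the sign of $\gamma$, which Lemma \ref{c2} has already pinned down.
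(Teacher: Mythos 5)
Your proof is correct and follows essentially the same route as the paper: reduce to signs $J_pv_i=\epsilon_i\bar v_i$, apply the K\"ahler identity (\ref{ghost}) to $\gamma$, evaluate the resulting curvature term as $1$ via Lemma \ref{c0}, and conclude $\epsilon_1=\epsilon_2$ and $\gamma=1$ from the positivity of $\gamma$ in Lemma \ref{c2}. The only (harmless) difference is that you spell out explicitly why $J_pv_i$ must be $\pm\bar v_i$, which the paper leaves implicit.
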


\begin{proof}
The assumptions imply that there are constants $c_1,c_2 \in \{-1,1\}$ such that $J_p v_i=c_i\bar{v}_i$ for $i=1,2$.  The first assertion in the lemma is the equality $c_1=c_2$ as will now be demonstrated.  Note that
$$\gamma=R(v_2,v_1,\bar{v}_1,\bar{v}_2)=R(J_pv_2,J_pv_1,\bar{v}_1,\bar{v}_2)=R(c_2 \bar{v}_2,c_1 \bar{v}_1,\bar{v}_1,\bar{v}_2)=c_1c_2$$ where Lemma \ref{c0} is used in the last equality.  By Lemma \ref{c2}, $\gamma>0$ whence $c_1=c_2$ and $\gamma=1$.
\end{proof}

\begin{corollary}\label{k7}
If $\sigma \subset T_pM$ is a $2$-plane section satisfying $A_p(\sigma)=\sigma$, then $J_p(\sigma)=\sigma$.
\end{corollary}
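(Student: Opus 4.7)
The idea is to reduce Corollary \ref{k7} to Lemma \ref{k6} by spectrally analyzing $A_p^2$. Since $A_p|_\sigma$ is a nondegenerate skew-symmetric endomorphism of the $2$-plane $\sigma$, one has $A_p^2|_\sigma = -\mu^2 \Id$ for some $\mu > 0$, so $\sigma$ lies in the $(-\mu^2)$-eigenspace $W \subset T_pM$ of $A_p^2$. By Proposition \ref{k5} the operator $J_p$ commutes with $A_p^2$, so $W$ is also $J_p$-invariant.

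On $W$, the operators $K := A_p|_W/\mu$ and $J_p|_W$ are commuting almost complex structures, so their product $KJ_p$ is a self-adjoint involution. This yields an orthogonal decomposition $W = W^+ \oplus W^-$ into the $(\pm 1)$-eigenspaces of $KJ_p$, with $A_p = -\mu J_p$ on $W^+$ and $A_p = \mu J_p$ on $W^-$; each summand is invariant under both $A_p$ and $J_p$.

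The decisive step is to exclude the case in which both $W^\pm$ are nontrivial. Each $W^\pm$ has even dimension (since $A_p$ acts there as a nondegenerate skew map), hence if nonzero contains a $J_p$-invariant $2$-plane $\sigma_\pm := \spn\{v_\pm, J_p v_\pm\}$ for some unit vector $v_\pm \in W^\pm$. These are orthogonal $A_p$- and $J_p$-invariant $2$-planes, so Lemma \ref{k6} applied to $\sigma_+ \oplus \sigma_-$ forces the signs $c_\pm \in \{\pm 1\}$ determined by $J_p v_\pm = c_\pm(A_p v_\pm / \mu)$ to agree. Yet by construction $c_+ = -1 \neq +1 = c_-$, a contradiction. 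Hence $W$ equals either $W^+$ or $W^-$, making $A_p|_W = \pm \mu J_p|_W$, and $A_p$-invariance coincides with $J_p$-invariance on $2$-planes in $W$; in particular, $\sigma$ is $J_p$-invariant. The only real subtlety is the sign bookkeeping that makes Lemma \ref{k6} bite; the remainder is a formal consequence of Proposition \ref{k5} and standard commuting-operator linear algebra.
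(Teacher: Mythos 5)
Your argument is correct, and it rests on the same two pillars as the paper's proof: the commutation $A_pJ_p=J_pA_p$ of Proposition \ref{k5} and the sign coherence of Lemma \ref{k6}, with the contradiction in both cases coming from a pair of orthogonal $A_p$- and $J_p$-invariant $2$-planes on which the signs relating $J_pv$ to $A_pv/\|A_pv\|$ disagree. What differs is how that pair is produced. The paper argues by contradiction inside the $4$-dimensional subspace $\sigma\oplus J_p(\sigma)$: assuming $J_p(\sigma)\neq\sigma$, it builds the explicit frame $e_1$, $e_2=A_pe_1$, $e_3=J_pe_1$, $e_4=J_pe_2$ and takes the planes generated by $e_1\pm e_4$ and their $A_p$-images. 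You instead pass to the $(-\mu^2)$-eigenspace $W$ of the symmetric operator $A_p^2$ containing $\sigma$ and split $W$ by the self-adjoint involution $KJ_p$ with $K=A_p|_W/\mu$. Your route yields the slightly stronger structural fact that $A_p=\pm\mu J_p$ on each eigenspace of $A_p^2$, and it sidesteps both the dichotomy $J_p(\sigma)\cap\sigma=\{0\}$ and the explicit frame computations (together with the normalization bookkeeping those entail); the paper's route stays entirely within the subspace determined by $\sigma$ and $J_p(\sigma)$ and requires no spectral decomposition of $A_p^2$. Two small points you use implicitly and could make explicit: for a unit vector $v\in W$ one has $\|A_pv\|^2=-\g(A_p^2v,v)=\mu^2$, so Lemma \ref{k6} applies with $\mu_1=\mu_2=\mu$; and the labelling convention $\mu_1\leq\mu_2$ (with $\lambda_1\leq\lambda_2$ in case of equality) preceding Lemma \ref{k6} is harmless here because its conclusion is symmetric in the two planes, so you may relabel $\sigma_{+}$ and $\sigma_{-}$ as needed.
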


\begin{proof}
After possibly rescaling $A_p$, there exists an orthonormal basis $\{e_1,e_2\}$ of $\sigma$ satisfying $A_p e_1=e_2$ and $A_p e_2=-e_1$.  If $J_p(\sigma) \neq \sigma$ then $J_p(\sigma) \cap \sigma=\{0\}$.  Letting $e_3=J_p e_1$ and $e_4=J_p e_2$, the vectors $\{e_1,e_2,e_3,e_4\}$ span a $4$-dimensional subspace of $T_pM$.  

By Proposition \ref{k5}, $A_p e_3=e_4$ and $A_p e_4=-e_3$ since
$$A_p e_3=A_pJ_pe_1=J_pA_pe_1=J_p e_2=e_4$$ and $$A_p e_4=A_p J_pe_2=J_pA_p e_2=-J_p e_1=-e_3.$$
Let $v_1=\frac{e_1+e_4}{\sqrt{2}}$ and $v_2=\frac{e_1-e_4}{\sqrt{2}}$ and use the above to calculate $\bar{v}_1=\frac{e_2-e_3}{\sqrt{2}}$ and $\bar{v}_2=\frac{e_2+e_3}{\sqrt{2}}$.  Verify that $\sigma_1=\spn\{v_1,\bar{v}_1\}$ and $\sigma_2=\spn\{v_2,\bar{v}_2\}$ are orthogonal $A_p$-invariant and $J_p$-invariant $2$-planes and that $J_p v_1=-\bar{v}_1$ and $J_p v_2=\bar{v}_2$. This contradicts Lemma \ref{k6}.
\end{proof}

\subsection{Proof of Theorem \ref{thm:D} when $d=\dim_{\mathbb{R}}(M) \geq 6$}

\begin{lemma}\label{k8}
For $p \in M$, $A_p$ has at most two distinct eigenvalues.
\end{lemma}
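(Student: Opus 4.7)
The plan is to decompose $T_pM$ into mutually orthogonal $A_p$-invariant $2$-planes and argue by contradiction that no three distinct moduli of eigenvalues can appear. Since the projective class $[A_p]$ is nonsingular, $A_p$ is an invertible skew-symmetric operator, and by the real spectral theorem we may write $T_pM = \bigoplus_{i=1}^{d/2} \sigma_i$ as an orthogonal sum of $A_p$-invariant $2$-planes. On each $\sigma_i$ the scaling factor $\mu_i := \|A_p v\|$ (for any unit $v \in \sigma_i$) is well-defined, and the distinct (imaginary) eigenvalues $\pm i\mu$ of $A_p$ correspond precisely to the distinct values among the $\mu_i$. By Corollary \ref{k7}, each $\sigma_i$ is additionally $J_p$-invariant, which is what will let the structural lemmas of this subsection be brought to bear on triples of such planes.

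Suppose toward contradiction that at least three distinct moduli occur, and pick orthogonal, $A_p$- and $J_p$-invariant $2$-planes $\sigma_1, \sigma_2, \sigma_3$ with $\mu_1 < \mu_2 < \mu_3$. Choose unit vectors $v_i \in \sigma_i$ and set $\bar v_i := A_p v_i / \mu_i$. On each two-dimensional $\sigma_i$ both $A_p$ and $J_p$ act as scaled rotations by $\pm \pi/2$, so $J_p v_i = \pm \bar v_i$ for every $i$. After possibly replacing $A_p$ by $-A_p$ (which does not alter the projective class or the decomposition), we may arrange $J_p v_1 = \bar v_1$. Applying Lemma \ref{k6} to the $A_p$- and $J_p$-invariant direct sums $V = \sigma_1 \oplus \sigma_2$ and $V = \sigma_1 \oplus \sigma_3$ then forces $J_p v_2 = \bar v_2$, $J_p v_3 = \bar v_3$, and $\gamma_{12} = \gamma_{13} = 1$. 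A third application of Lemma \ref{k6} to $V = \sigma_2 \oplus \sigma_3$ yields $\gamma_{23} = 1$ as well.

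With $\gamma_{ij} = 1$ in each case, Corollary \ref{c11} ensures $\lambda_1 < \lambda_2 < \lambda_3$, which rules out the equality cases in Lemma \ref{c3}. Applied to the pairs $(\sigma_1, \sigma_2)$ and $(\sigma_2, \sigma_3)$, that lemma then gives the strict inequalities
\[
\lambda_1 < 3\gamma_{12} + 1 = 4 < \lambda_2 \quad\text{and}\quad \lambda_2 < 3\gamma_{23} + 1 = 4 < \lambda_3,
\]
which together demand $\lambda_2 > 4$ and $\lambda_2 < 4$, the desired contradiction. The main subtlety I anticipate is the consistency of the sign normalization $J_p v_i = \bar v_i$ across all three planes: a single global sign flip $A_p \mapsto -A_p$ suffices precisely because Lemma \ref{k6} propagates the relation from $\sigma_1$ to each $\sigma_i$ with $i > 1$, and the third application to $(\sigma_2, \sigma_3)$ both confirms the induced alignment and furnishes $\gamma_{23} = 1$.
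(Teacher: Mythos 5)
Your proof is correct and follows essentially the same route as the paper: pick three orthogonal $A_p$-invariant (hence, by Corollary \ref{k7}, $J_p$-invariant) $2$-planes with $\mu_1<\mu_2<\mu_3$, use Corollary \ref{c11} to rule out the equality case in Lemma \ref{c3}, and apply Lemmas \ref{k6} and \ref{c3} to the pairs $(\sigma_1,\sigma_2)$ and $(\sigma_2,\sigma_3)$ to force $\lambda_2>4$ and $\lambda_2<4$ simultaneously. Your sign-alignment precaution is harmless but unnecessary, since Lemma \ref{k6} yields $\gamma=1$ in either case $J_pv_1=\pm\bar v_1$.
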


\begin{proof}
If not, then there exist three orthogonal $A_p$-invariant $2$-planes $\sigma_i$, $i=1,2,3$ and constants $0<\mu_1<\mu_2<\mu_3$ such that $\|A(w_i)\|=\mu_i$ for each unit vector $w_i\in \sigma_i$.  Let $\lambda_i=\sec(\sigma_i)$.  As $\mu_1<\mu_2$, Corollary \ref{c11} implies that $\lambda_1<\lambda_2$.  By Lemmas \ref{c3} and \ref{k6},  $\lambda_2>4$.  As $\mu_2<\mu_3$, Corollary \ref{c11} implies that $\lambda_2<\lambda_3$.  By Lemmas \ref{c3} and \ref{k6},  $\lambda_2<4$, a contradiction.
\end{proof}

\begin{lemma}\label{k9}
If $d=\dim_{\mathbb{R}}(M)\geq 6$, then $A_p$ has a single eigenvalue for each $p \in M$.
\end{lemma}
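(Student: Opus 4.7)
The plan is a proof by contradiction. By Lemma \ref{k8}, $A_p$ has at most two distinct eigenvalue magnitudes, so suppose there are exactly two: $0 < \mu_1 < \mu_2$, with orthogonal eigenspaces $V_i := \ker(A_p^2 + \mu_i^2 \Id)$. Since $A_p$ commutes with $J_p$ by Proposition \ref{k5}, both $V_i$ are $J_p$-invariant and therefore of even real dimension. Because these dimensions sum to $d \geq 6$, at least one of them is $\geq 4$; without loss of generality assume $\dim V_2 \geq 4$ (the other case being symmetric).

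The core idea is to produce three mutually orthogonal $A_p$- and $J_p$-invariant $2$-planes $\sigma_1, \sigma_2 \subset V_2$ and $\sigma_3 \subset V_1$, and then to clash a numerical equality against numerical lower bounds. To construct them, pick a unit $e_1 \in V_2$ and set $\sigma_1 := \spn\{e_1, A_p e_1\}$; next pick a unit $e_2 \in \sigma_1^\perp \cap V_2$ (which exists because $\dim V_2 \geq 4$) and set $\sigma_2 := \spn\{e_2, A_p e_2\}$; finally pick any unit $e_3 \in V_1$ and set $\sigma_3 := \spn\{e_3, A_p e_3\}$. Each $\sigma_i$ is $A_p$-invariant by construction (since $A_p^2$ acts as a negative scalar on each $V_i$) and skew-adjointness of $A_p$ makes $\sigma_1^\perp \cap V_2$ itself $A_p$-invariant, so $\sigma_2 \subset V_2$ really is orthogonal to $\sigma_1$. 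Corollary \ref{k7} then supplies $J_p$-invariance of each $\sigma_i$ for free.

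Writing $\lambda_i := \sec(\sigma_i)$, Lemma \ref{k6} applied to each of the three orthogonal pairs yields $\gamma = 1$ in every case. For the in-$V_2$ pair $(\sigma_1, \sigma_2)$ both $A_p$-magnitudes equal $\mu_2$, so Lemma \ref{c2} collapses to $\lambda_1 + \lambda_2 - 2 = 6$, i.e.\ $\lambda_1 + \lambda_2 = 8$. For each mixed pair $(\sigma_3, \sigma_i)$ the magnitudes $\mu_1 < \mu_2$ are unequal, so the sharp form of Lemma \ref{c3} (its ``only if $\mu_1 = \mu_2$'' equality clause) forces the strict inequality $\lambda_3 < 3\gamma + 1 = 4 < \lambda_i$ for $i \in \{1,2\}$. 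Summing produces $\lambda_1 + \lambda_2 > 8$, contradicting the equality already obtained.

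The only point I anticipate as mildly delicate is verifying that Lemma \ref{k6} genuinely applies to pairs of $2$-planes drawn from a single eigenspace $V_2$ of $A_p$, since earlier in the section one tends to think of such planes as sitting in different eigenspaces; but the statement of Lemma \ref{k6} hypothesizes only orthogonality and joint $A_p$- and $J_p$-invariance, with no restriction whatsoever on the magnitudes, so the application is legitimate and the strict sign separation $\lambda_i > 4$ combined with the in-$V_2$ equality $\lambda_1 + \lambda_2 = 8$ closes the argument.
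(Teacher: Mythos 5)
Your argument is correct and is essentially the paper's own proof: decompose into the two eigenspaces, pick two invariant $2$-planes in the one of dimension at least four and one in the other, then play the equality $\lambda_1+\lambda_2=8$ from Lemmas \ref{c2} and \ref{k6} against the strict bounds from Lemmas \ref{c3} and \ref{k6} on the mixed pairs. The only cosmetic difference is that the paper writes out both cases ($\dim E_1\geq 4$ and $\dim E_2\geq 4$) explicitly, while you treat one and correctly note the other is the mirror argument (with both curvatures strictly below $4$ instead of above).
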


\begin{proof}
If not, Lemma \ref{k8} implies that there exist constants $0<\mu_1<\mu_2$ and $A_p$-eigenspaces $E_1$ and $E_2$ such that $T_pM$ is the orthogonal direct sum $T_pM=E_1\oplus E_2$ and $\|A_p(v_i)\|=\mu_i$ for each unit vector $v_i \in E_i$, $i=1,2$.  As $\dim_{\mathbb{R}}(M)\geq 6$, one of the two eigenspaces $E_1$ or $E_2$ has real dimension at least four.\\

\textbf{Case I:} $\dim_{\mathbb{R}}(E_1)\geq 4$\\

Choose orthogonal $A_p$-invariant $2$-planes $\sigma_1, \sigma_2 \subset E_1$ and $\sigma_3 \subset E_2$.  Let $\lambda_i=\sec(\sigma_i)$ for each $i=1,2,3$.    As $\mu_1<\mu_2$, Corollary \ref{c11} implies that $\lambda_1<\lambda_3$ and $\lambda_2<\lambda_3$.  Apply Lemmas \ref{c3} and \ref{k6}  to the four dimensional subspaces $\sigma_1\oplus \sigma_3$ and $\sigma_2 \oplus \sigma_3$ to deduce $\lambda_1<4$ and $\lambda_2<4$. Apply Lemmas \ref{c2} and \ref{k6} to the four dimensional subspace $\sigma_1 \oplus \sigma_2$ to deduce $\lambda_1+\lambda_2=8$, a contradiction.\\

\textbf{Case II:} $\dim_{\mathbb{R}}(E_2)\geq 4$\\

Choose orthogonal $A_p$-invariant $2$-planes $\sigma_1 \subset E_1$ and $\sigma_2,\sigma_3 \subset E_2$.  Let $\lambda_i=\sec(\sigma_i)$ for each $i=1,2,3$.  As $\mu_1<\mu_2$, Corollary \ref{c11} implies that $\lambda_1<\lambda_2$ and $\lambda_1<\lambda_3$.  Apply Lemmas \ref{c3} and \ref{k6}  to the four dimensional subspaces $\sigma_1\oplus \sigma_2$ and $\sigma_1 \oplus \sigma_3$ to deduce $\lambda_2>4$ and $\lambda_3>4$.  Applying Lemmas \ref{c2} and \ref{k6} to the four dimensional subspace $\sigma_2 \oplus \sigma_3$ to deduce $\lambda_2+\lambda_3=8$, a contradiction.
\end{proof}

\begin{remark}\label{approach}
When $\dim_{\mathbb{R}}(M)\geq 6$, Theorem \ref{thm:D} is easily derived from Lemma \ref{k9} and Theorem \ref{thm:C}.  This approach is taken when $\dim_{\mathbb{R}}=4$ in the next section.  

In the remainder of this section, a more elementary proof is presented for the case when $\dim_{\mathbb{R}}(M)\geq 6$.  This alternative proof is based on the well-known classification \cite{haw, igu} of simply-connected K\"ahlerian manifolds having constant holomorphic curvatures.
\end{remark}

\begin{corollary}\label{k13}
A $2$-plane $\sigma\subset T_pM$ is holomorphic if and only if $A_p(\sigma)=\sigma$.
\end{corollary}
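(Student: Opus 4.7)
The implication $A_p(\sigma)=\sigma \Rightarrow \sigma$ holomorphic is exactly Corollary \ref{k7}, so the plan is to establish the converse: every $J_p$-invariant $2$-plane is $A_p$-invariant. I would start by invoking Lemma \ref{k9}, which gives that $A_p$ has a single eigenvalue. This lets me choose the representative of the projective class $[A_p]$ for which $\|A_pv\|=1$ for all $v\in S_pM$. Combined with skew-symmetry, this representative satisfies $A_p^2=-\Id$, so $A_p$ is an orthogonal complex structure on $T_pM$, and by Proposition \ref{k5} it commutes with $J_p$.

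Given two commuting orthogonal complex structures, I would introduce the operator $B:=A_pJ_p$. Since $A_p$ and $J_p$ are skew-symmetric and commute, $B$ is symmetric, and $B^2=A_p^2J_p^2=\Id$. Thus $T_pM$ decomposes orthogonally into $\pm 1$-eigenspaces $T_pM=V_+\oplus V_-$. Each $V_\epsilon$ is simultaneously $A_p$- and $J_p$-invariant (because both commute with $B$ and with each other), and on $V_\epsilon$ the relation $A_pJ_p=\epsilon\Id$ forces $A_p=-\epsilon J_p$.

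The heart of the argument is then to show that one of $V_+$, $V_-$ is trivial, for this yields $A_p=\pm J_p$ on $T_pM$ and makes the equivalence of $A_p$- and $J_p$-invariance of $2$-planes immediate. Suppose instead both $V_\pm$ are nonzero and pick unit vectors $v_\pm\in V_\pm$. The planes $\sigma_\pm:=\spn\{v_\pm,J_pv_\pm\}$ lie in $V_\pm$ respectively, hence are orthogonal and are both $A_p$- and $J_p$-invariant. Using the notation $\bar v_i=A_pv_i/\mu_i$ of Lemma \ref{k6} (with $\mu_\pm=1$), the identities $A_pv_+=-J_pv_+$ and $A_pv_-=J_pv_-$ translate into $J_pv_+=-\bar v_+$ and $J_pv_-=+\bar v_-$. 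These opposite signs directly contradict the sign-matching conclusion of Lemma \ref{k6}, ruling out the case that both eigenspaces are nontrivial.

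The main obstacle is really a conceptual rather than computational one: recognizing that the single-eigenvalue conclusion of Lemma \ref{k9} turns $A_p$ into a second complex structure and that Lemma \ref{k6} is exactly the tool needed to obstruct a mixed splitting. Once these pieces are lined up, the remaining verifications — that $V_\pm$ are $A_p$- and $J_p$-invariant, that $\{v_\pm,J_pv_\pm\}$ are orthonormal, and that $V_\pm$ encode precisely the two possible sign conventions relating $A_p$ and $J_p$ — are elementary linear algebra with no further curvature input required.
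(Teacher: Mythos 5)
Your proposal is correct, but for the converse it takes a genuinely different route from the paper. The paper's proof is a short uniqueness argument: given a holomorphic plane $\sigma$ and $0\neq v\in\sigma$, Lemma \ref{k9} makes $\bar{\sigma}=\spn\{v,A_pv\}$ an $A_p$-invariant plane, Corollary \ref{k7} makes $\bar{\sigma}$ holomorphic, and since $v$ lies in a unique holomorphic $2$-plane, $\sigma=\bar{\sigma}$ is $A_p$-invariant. You instead upgrade Lemma \ref{k9} to the pointwise identity $A_p=\pm J_p$ (for the representative with $A_p^2=-\Id$): using Proposition \ref{k5} you form the symmetric involution $B=A_pJ_p$, split $T_pM=V_+\oplus V_-$, note $A_p=-\epsilon J_p$ on $V_\epsilon$, and rule out a mixed splitting by feeding a pair of planes $\sigma_\pm\subset V_\pm$ with opposite sign conventions into the sign-matching conclusion of Lemma \ref{k6} (the labeling convention $\mu_1\leq\mu_2$, $\lambda_1\leq\lambda_2$ in that subsection is harmless here since the contradiction is symmetric under swapping the two planes). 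Both arguments hinge on Lemma \ref{k9}; the paper's is shorter and needs only Corollary \ref{k7}, while yours costs the extra input of Lemma \ref{k6} but delivers a stronger and geometrically cleaner conclusion, namely that the eigenspace distribution is generated by $\pm J_p$ itself, from which the stated equivalence of invariances is immediate.
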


\begin{proof}
Fix $p \in M$ and let $\sigma \subset T_pM$ be a $2$-plane.  If $A_p(\sigma)=\sigma$ then $J_p(\sigma)=\sigma$ by Corollary \ref{k7}.  Conversely, assume that $J_p(\sigma)=\sigma$ and let $v \in \sigma$ be a nonzero vector.  The $2$-plane $\bar{\sigma}=\spn\{v,A_pv\}$ is $A_p$-invariant by Lemma \ref{k9}.  By Corollary \ref{k7}, $\bar{\sigma}$ is $J_p$-invariant.  As $v$ lies in a unique holomorphic $2$-plane, $\sigma=\bar{\sigma}$, so that $\sigma$ is $A_p$-invariant. 
\end{proof}

\begin{corollary}\label{k10}
If $d=\dim_{\mathbb{R}}(M) \geq 6$, then $\lambda(v)=4$ for every unit vector $v \in SM$.
\end{corollary}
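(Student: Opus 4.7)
The plan is to combine Lemma \ref{k9} (single eigenvalue of $A_p$), Lemma \ref{c2}, Lemma \ref{k6}, and the identification of holomorphic $2$-planes with $A_p$-invariant $2$-planes (Corollary \ref{k13}) to show that every holomorphic $2$-plane at $p$ has sectional curvature $4$, from which the claim follows immediately since $\spn\{v,A_pv\}$ is holomorphic.

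First I would establish that for any pair of orthogonal holomorphic $2$-planes $\sigma_1,\sigma_2\subset T_pM$, the holomorphic curvatures $\lambda_i := \sec(\sigma_i)$ satisfy $\lambda_1+\lambda_2 = 8$. By Corollary \ref{k13} both $\sigma_i$ are $A_p$-invariant, and by Lemma \ref{k9} the corresponding scaling constants coincide, $\mu_1 = \mu_2 =: \mu$. Because $A_p$ and $J_p$ commute (Proposition \ref{k5}) and both restrict to skew-symmetric endomorphisms of the $2$-plane $\sigma_i$, their restrictions are proportional, so for any unit $v_i\in\sigma_i$ one has $J_p v_i = \pm\bar v_i$ with $\bar v_i := A_p v_i/\mu$. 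Lemma \ref{k6} then gives $\gamma = R(v_2,v_1,\bar v_1,\bar v_2) = 1$. Substituting $\mu_1=\mu_2$ and $\gamma=1$ into the identity of Lemma \ref{c2} yields
$$\mu^2(\lambda_2-1) + \mu^2(\lambda_1-1) = 6\mu^2,$$
i.e.\ $\lambda_1+\lambda_2 = 8$.

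Next I would exploit $d\geq 6$, equivalently $\dim_{\mathbb{C}}(M) \geq 3$, to extend any given holomorphic $2$-plane $\sigma_1\subset T_pM$ to a triple $\sigma_1,\sigma_2,\sigma_3$ of pairwise orthogonal holomorphic $2$-planes. Applying the identity above to each of the three pairs gives the linear system
$$\lambda_1+\lambda_2 = \lambda_1+\lambda_3 = \lambda_2+\lambda_3 = 8,$$
whose unique solution is $\lambda_i = 4$ for $i=1,2,3$. Hence every holomorphic $2$-plane at $p$ has sectional curvature $4$. Finally, for an arbitrary $v\in S_pM$ the $2$-plane $\spn\{v,A_pv\}$ is $A_p$-invariant, hence holomorphic by Corollary \ref{k13}, so $\lambda(v) = \sec(v,A_pv) = 4$. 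The only nontrivial step is the commutativity observation $J_p v_i = \pm\bar v_i$, which follows because two commuting skew-symmetric endomorphisms of a $2$-plane are proportional; everything else is a direct assembly of results already in hand.
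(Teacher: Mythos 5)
Your proposal is correct and follows essentially the same route as the paper: the key step in both is the identity $\lambda_i+\lambda_j=8$ for orthogonal $A_p$-invariant $2$-planes (from Lemmas \ref{c2} and \ref{k6} with $\mu_i=\mu_j$ via Lemma \ref{k9}), applied to the three pairs among $\sigma_1,\sigma_2,\sigma_3$ to solve the linear system. Your reformulation through holomorphic $2$-planes via Corollary \ref{k13} is only a cosmetic difference, since by that corollary these are exactly the $A_p$-invariant $2$-planes the paper uses directly.
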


\begin{proof}
Given $v \in S_pM$, the $2$-plane $\sigma_1=\spn\{v, A_p v\}$ is $A_p$-invariant by Lemma \ref{k9}.  As $\dim_{\mathbb{R}}(M)\geq 6$, there exist orthogonal $A_p$-invariant $2$-planes $\sigma_2, \sigma_3 \subset \sigma_1^{\perp}$.  Let $\lambda_i=\sec(\sigma_i)$ for $i=1,2,3$ and note that $\lambda(v)=\lambda_1$.

Applying Lemmas \ref{c2} and \ref{k6} to the three four dimensional subspaces $\sigma_i \oplus \sigma_j$, $i,j \in \{1,2,3\}$ distinct, yields the linear system $$\lambda_1+\lambda_2=\lambda_1+\lambda_3=\lambda_2+\lambda_3=8,$$  whose solution $\lambda_1=\lambda_2=\lambda_3=4$ is unique.
\end{proof}

\begin{theorem}\label{not4}
A K\"ahlerian manifold with $\sec \geq 1$, real dimension $d \geq 6$, and spherical rank at least $d-2$ is isometric to a globally symmetric $\mathbb{C}\mathbb{P}^{d/2}$ with holomorphic curvatures equal to $4$.
\end{theorem}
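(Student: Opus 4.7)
The plan is to deduce that $M$ has constant holomorphic sectional curvature equal to $4$, and then invoke the classical classification of simply-connected, complete K\"ahlerian manifolds with constant holomorphic sectional curvature cited in Remark \ref{approach}.

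First I would identify the quantity $\lambda(v)$ with the holomorphic sectional curvature $H(v) = \sec(v, J_p v)$. For any $p \in M$ and any unit vector $v \in S_pM$, the $2$-plane $\spn\{v, A_p v\}$ is $A_p$-invariant: since $A_p$ has a single eigenvalue by Lemma \ref{k9}, its skew-symmetry forces $A_p^2$ to be a negative multiple of the identity, so $A_p^2 v \in \spn\{v\}$. By Corollary \ref{k13}, this $A_p$-invariant $2$-plane must also be $J_p$-invariant, and hence coincides with the unique holomorphic $2$-plane $\spn\{v, J_p v\}$ through $v$. In particular, $H(v) = \sec(v, J_p v) = \sec(v, A_p v) = \lambda(v)$. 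Combining this with Corollary \ref{k10} gives $H(v) \equiv 4$ on all of $SM$, so $M$ has constant holomorphic sectional curvature $4$.

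Next I would check completeness and simple-connectivity of $M$. Since $\sec \geq 1$, Myers' theorem implies $M$ is compact, hence complete. Being orientable (complex), even-dimensional, and positively curved, Synge's theorem (already invoked at the opening of this section) gives that $M$ is simply connected. Therefore $M$ is a simply-connected, complete K\"ahlerian manifold of constant holomorphic sectional curvature $4$.

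Finally, by the classification theorem of Hawley and Igusa \cite{haw, igu}, any such manifold is isometric to the globally symmetric complex projective space $\mathbb{C}\mathbb{P}^{d/2}$ with the Fubini--Study metric normalized so that the holomorphic sectional curvature equals $4$, which is the desired conclusion. The only non-trivial step is the identification $\lambda(v) = H(v)$; everything else is a direct citation of previously established corollaries and classical results. No real obstacle remains, since the heavy lifting was already done in the curvature computations of Lemmas \ref{c1}--\ref{k9} and Corollary \ref{k10}.
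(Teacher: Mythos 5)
Your proposal is correct and follows essentially the same route as the paper: use Corollary \ref{k13} (together with Lemma \ref{k9}) to identify holomorphic $2$-planes with $A_p$-invariant ones, so $H(v)=\lambda(v)=4$ by Corollary \ref{k10}, and then invoke the Hawley--Igusa classification of simply connected K\"ahlerian manifolds of constant holomorphic curvature. The only cosmetic difference is the direction of the identification (you start from $\spn\{v,A_pv\}$ and show it is holomorphic, while the paper starts from a holomorphic plane and shows it is $A_p$-invariant), which changes nothing of substance.
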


\begin{proof}
It suffices to prove that all holomorphic $2$-planes in $M$ have sectional curvature equal to four by \cite{haw,igu}.  Let $p \in M$ and  let $\sigma\subset T_pM$ be a holomorphic $2$-plane.  Let $v \in \sigma$ be a nonzero vector.  By Corollary \ref{k13}, $\sigma$ is $A_p$-invariant, so that $\sec(\sigma)=\sec(v,A_pv)=\lambda(v)$.  By Corollary \ref{k10}, $\lambda(v)=4$. 
\end{proof}

\section{Proof of Theorem \ref{thm:D} in real dimension four}\label{sec:D2}
This final section completes the proof of Theorem \ref{thm:D}, establishing its veracity when $d=\dim_{\mathbb{R}}(M)=4$.  The approach, alluded to in Remark \ref{approach}, is to appeal to Theorem \ref{thm:C}.  The main step in proving that $M$ satisfies the Raki\'c duality principle is to establish the analogue of Lemma \ref{k9} when $d=4$.  The following lemma, likely well-known, is used for this purpose.

\begin{lemma}\label{split}
Let $B$ be an open connected subset of a Riemannian manifold $(M,\g)$ admitting a pair of orthogonal and totally geodesic foliations $\mathcal{F}_1$ and $\mathcal{F}_2$.  Then $B$ is locally isometric to the product $\mathcal{F}_1 \times \mathcal{F}_2$.
\end{lemma}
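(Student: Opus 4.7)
The plan is to show that the two foliation tangent distributions $D_i := T\mathcal{F}_i$ are not merely integrable and autoparallel but in fact parallel with respect to the Levi-Civita connection $\nabla$, and then invoke the local de Rham decomposition theorem. The statement ``$\mathcal{F}_i$ is totally geodesic'' translates into autoparallelism: for $X,Y \in \Gamma(D_i)$ one has $\nabla_X Y \in \Gamma(D_i)$, since a submanifold is totally geodesic iff its second fundamental form vanishes iff the induced connection agrees with the ambient one on tangential fields.

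The key step, which I anticipate to be the main technical point, is the cross-term calculation showing that each distribution is also parallel along the other. Let $X\in\Gamma(D_1)$ and $Z\in\Gamma(D_2)$, and take an arbitrary $Y\in\Gamma(D_2)$. The orthogonality $\g(X,Y)=0$ together with the compatibility of $\nabla$ with $\g$ yields
\begin{equation*}
0 \;=\; Z\,\g(X,Y) \;=\; \g(\nabla_Z X,Y) + \g(X,\nabla_Z Y).
\end{equation*}
By autoparallelism of $D_2$, $\nabla_Z Y \in \Gamma(D_2)$, so the second term vanishes because $X\perp D_2$. Since $Y\in\Gamma(D_2)$ was arbitrary, we conclude $\nabla_Z X\in\Gamma(D_1)$. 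The symmetric argument gives $\nabla_X Z\in\Gamma(D_2)$. Combining with autoparallelism of each $D_i$ and decomposing an arbitrary vector field as a sum of sections of $D_1$ and $D_2$, it follows that $\nabla_W X\in\Gamma(D_1)$ for every $W\in\mathcal{X}(B)$ and every $X\in\Gamma(D_1)$, i.e.\ $D_1$ is a parallel distribution. Likewise $D_2$ is parallel.

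Having established that $B$ admits two complementary, orthogonal, parallel distributions, the local de Rham theorem (see, e.g., Kobayashi--Nomizu, \textit{Foundations of Differential Geometry}, Vol.\ I, Ch.\ IV, \S 6) furnishes, around each point, an isometry from a neighborhood in $B$ onto a Riemannian product $U_1\times U_2$ where $U_i$ is an open piece of a leaf of $\mathcal{F}_i$; the factors carry the induced leaf metrics. This is precisely the local splitting claimed.

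An alternative, more hands-on route would be to take foliation charts $(x^1,\dots,x^k,y^1,\dots,y^{n-k})$ with $\partial_{x^i}\in \Gamma(D_1)$ and $\partial_{y^j}\in \Gamma(D_2)$ (available by Frobenius), so that orthogonality kills the mixed metric components and reduces the matrix of $\g$ to block-diagonal form $\g = g^{(1)}_{ij}(x,y)\,dx^i\,dx^j + g^{(2)}_{ab}(x,y)\,dy^a\,dy^b$. Koszul's formula \eqref{koszul} applied to the parallelism $\nabla_{\partial_{y^a}}\partial_{x^i}\in D_1$ and $\nabla_{\partial_{x^i}}\partial_{y^a}\in D_2$ then forces $\partial_{y^a} g^{(1)}_{ij} = 0$ and $\partial_{x^i} g^{(2)}_{ab} = 0$, giving the product structure directly without invoking de Rham.
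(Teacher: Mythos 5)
Your proof is correct and follows essentially the same route as the paper: show the two orthogonal distributions are parallel with respect to $\nabla$ (autoparallelism from the totally geodesic leaves, plus the metric-compatibility cross-term computation) and then invoke the local de Rham splitting theorem. The only differences are cosmetic — the paper extracts the tangency of $\nabla_{\bar h}h$ by combining integrability with the vanishing symmetrized second fundamental form rather than citing autoparallelism outright, and it verifies parallelism of just one distribution, which already suffices for de Rham.
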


\begin{proof}
If $H=T\mathcal{F}_1$ and $V=T\mathcal{F}_2$, then the tangent bundle splits orthogonally $TB=H\oplus V$. By de Rham's splitting theorem, it suffices to prove that the distribution $H$ is parallel on $B$. Let $h, \bar{h}$ denote vector fields tangent to $H$ and let $v, \bar{v}$ denote vector fields tangent to $V$.  

As $H$ is integrable, $0=\g([h,\bar{h}],v),$ implying $\g(\nabla_h \bar{h},v)=\g(\nabla_{\bar{h}} h,v)$.  As $H$ is totally geodesic, $\g(\nabla_h \bar{h},v)=-\g(\nabla_{\bar{h}} h,v)$.  Conclude that  
\begin{equation}\label{y1}
\g(\nabla_{\bar{h}} h,v)=0.
\end{equation}

Similarly, the fact that $V$ is integrable and totally geodesic implies that $\g(\nabla_{\bar{v}} v,h)=0$.  As $H$ and $V$ are orthogonal, this implies
\begin{equation}\label{y2}
\g(\nabla_{\bar{v}} h, v)=0.
\end{equation}

By (\ref{y1}) and (\ref{y2}), $H$ is parallel on $B$, concluding the proof.
\end{proof}

Recall from the proof of item (3) of Theorem \ref{thm:B} that there exists a smooth section $p \ni M \mapsto A_p \in SL(T_pM)$.




\begin{proposition}\label{k12}
Assume that $d=\dim_{\mathbb{R}}(M)=4$.  Then for each $p \in M$, $A_p$ has a single eigenvalue.
\end{proposition}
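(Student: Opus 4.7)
The plan is to argue by contradiction: assume $A_p$ has two distinct eigenvalues at some point and derive a local metric splitting that conflicts with Lemma \ref{c0}.

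Since $A^2$ has at most two distinct eigenvalues (Lemma \ref{k8}), by continuity the assumption persists on an open connected neighborhood $U\subset M$, producing two smooth orthogonal $2$-plane distributions $E_1,E_2$ on $U$ (the eigenspaces of $A^2$). Each $E_i$ is $J$-invariant by Corollary \ref{k7}, and since $A$ commutes with $J$ (Proposition \ref{k5}), one has $A|_{E_i}=\mu_i J|_{E_i}$ for smooth positive functions $\mu_1<\mu_2$ on $U$. The plan is to show that $E_1$ and $E_2$ are totally geodesic foliations; Lemma \ref{split} then furnishes a local isometric splitting $U\cong\mathcal{F}_1\times\mathcal{F}_2$, which yields a contradiction because mixed $2$-planes in a Riemannian product have sectional curvature $0$, whereas Lemma \ref{c0} applied to the orthogonal $A$-invariant planes $E_1,E_2$ forces mixed planes to have curvature $1$.

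The central step is extracting a pointwise identity on $\nabla A$ from the parallel invariance of the spherical distribution $D$. For a unit $u\in S_qM$ with $V(t)=P_t u=\dot\gamma_u(t)$, the identity $D_{V(t)}=P_t D_u$ combined with the pointwise formula $D_w=\spn\{w,A_r w\}^{\perp}$ (at the footpoint $r$ of $w$) gives
\[
\spn\{V(t),\,A_{\gamma_u(t)}V(t)\}=\spn\{V(t),\,P_t A_q u\}.
\]
Both $A_{\gamma_u(t)}V(t)$ and $P_t A_q u$ are perpendicular to $V(t)$ (by skew-symmetry of $A$ and by parallel transport of $A_q u\perp u$), which forces $A_{\gamma_u(t)}V(t)=\beta(t)\,P_t A_q u$ with $\beta(0)=1$. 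Applying the covariant derivative $\tfrac{D}{dt}|_{t=0}$ along $\gamma_u$, and using that $V$ and $P_t A_q u$ are parallel, yields the tensorial identity
\[
(\nabla_v A)(v)\in\R\cdot Av\quad\text{for every }v\in TM.
\]

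Applying this to $v\in E_1$, the decomposition $A=\mu_1 J\pi_1+\mu_2 J\pi_2$ with $\nabla J=0$, together with the tensorial identities $(\nabla_X\pi_1)(v)=\pi_2(\nabla_X v)$ and $(\nabla_X\pi_2)(v)=-\pi_2(\nabla_X v)$, gives
\[
(\nabla_v A)(v)=d\mu_1(v)\,Jv+(\mu_1-\mu_2)\,J\pi_2(\nabla_v v),
\]
whose $E_1$-component lies in $\R\cdot Av$ but whose $E_2$-component equals $(\mu_1-\mu_2)J\pi_2(\nabla_v v)$. Proportionality to $Av\in E_1$ together with $\mu_1\neq\mu_2$ forces $\pi_2(\nabla_v v)=0$, so $E_1$ is totally geodesic; symmetrically $E_2$ is. Integrability of $E_1$ follows from $\nabla J=0$: for a local section $X$ of $E_1$, $[X,JX]=J\nabla_X X-\nabla_{JX}X$, and total-geodesicity applied to $X$ and to $JX\in E_1$ places both summands in $E_1$. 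Lemma \ref{split} now produces the desired splitting and completes the contradiction.

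The hard part will be rigorously extracting the tensorial identity $(\nabla_v A)(v)\in\R\cdot Av$. Conceptually it is just the infinitesimal shadow of the parallel invariance of $D$, but its proof requires the skew-symmetry step ruling out a spurious $V(t)$-component in the $2$-plane decomposition, together with the correct identification of the covariant $t$-derivative of $A_{\gamma_u(t)}V(t)$ with $(\nabla_u A)_q(u)$ at the footpoint $q$ of $u$.
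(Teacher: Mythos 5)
Your argument is correct, but the key step is genuinely different from the paper's. Both proofs run the same outer contradiction: two distinct eigenvalues on an open set give smooth, orthogonal, $J$-invariant eigenplane fields (Corollary \ref{k7}), which are shown to be integrable and totally geodesic, and Lemma \ref{split} then contradicts $\sec\geq 1$ (your appeal to Lemma \ref{c0} is the same contradiction). The difference is how total geodesy is obtained. The paper first proves the strict inequalities $\lambda_1<4<\lambda_2$ (Corollary \ref{c11}, Lemmas \ref{c2}, \ref{c3}, \ref{k6}) and then combines the differential Bianchi identity with $\nabla J=0$: from $(\lambda_1-1)\g(\nabla_{v_2}v_2,\bar{v}_1)+3\g(\nabla_{v_2}\bar{v}_2,v_1)=0$ and $\g(\nabla_{v_2}v_2,\bar{v}_1)=-\g(\nabla_{v_2}\bar{v}_2,v_1)$, vanishing follows precisely because $\lambda_1\neq 4$ (and symmetrically $\lambda_2\neq 4$). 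You instead differentiate the parallel invariance $D_{\dot{\gamma}_u(t)}=P_tD_u$ --- legitimate here since every vector has rank $d-2$, so $D_w=\spn\{w,A_rw\}^{\perp}$ and both sides have equal dimension --- to obtain the first-order identity $(\nabla_uA)(u)\in\R\,A_qu$; your reduction $A_{\gamma_u(t)}\dot{\gamma}_u(t)=\beta(t)P_tA_qu$ (skew-symmetry plus the isometry property of $P_t$) with $\beta$ smooth, and the computation $(\nabla_uA)(u)=\beta'(0)A_qu$, are sound. Then $\nabla J=0$ and $\mu_1\neq\mu_2$ kill $\pi_2(\nabla_vv)$ directly, and integrability follows from $J$-invariance as you indicate. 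Your route buys independence from the Berger-type estimates and the pinching $\lambda_1<4<\lambda_2$: it exploits the rank hypothesis through $D$ and uses only Corollary \ref{k7} and $\nabla J=0$ from the K\"ahler machinery, whereas the paper's route works purely at the level of the curvature tensor at the cost of those estimates. One cosmetic caveat: on each eigenplane one only gets $A|_{E_i}=c_i\mu_i J|_{E_i}$ with $c_i=\pm 1$, not necessarily $c_1=c_2=+1$ as written; this is harmless since the relevant coefficient $c_1\mu_1-c_2\mu_2$ is nonzero for either choice of signs (or fix the signs via Lemma \ref{k6}).
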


\begin{proof}

If not, then there exists a metric ball $B$ in $M$ with the property that for each $b \in B$, $A_b$ has two distinct eigenvalues.  For each $b \in B$, there exist constants $0<\mu_1(b)<\mu_2(b)$ and orthogonal eigenplanes $\sigma_1(b)$ and $\sigma_2(b)$ of $A_b$ satisfying $\|A_b(v_i)\|=\mu_i(b)$ for each unit vector $v_i \in \sigma_i(b)$.  As the $A_b$ vary smoothly with $b \in B$, the functions $\mu_i:B \rightarrow \R$ and the orthogonal splitting $TB=\sigma_1 \oplus \sigma_2$ are both smooth.  Define $\lambda_i:B \rightarrow \R$ by $\lambda_i(b)=\sec(\sigma_i(b))$ for $i=1,2$.

After possibly reducing the radius of $B$,  there exist smooth unit vector fields $v_1$ and $v_2$ on $B$ tangent to $\sigma_1$ and $\sigma_2$ respectively.  By Corollary \ref{k7}, the two $2$-plane fields $\sigma_1$ and $\sigma_2$ are $J$-invariant.  Therefore, letting $\bar{v}_i=Jv_i$, the smooth orthonormal framing $\{v_1,\bar{v}_1,v_2,\bar{v}_2\}$ of $TB$ satisfies $\sigma_i=\spn\{v_i,\bar{v}_i\}$ for $i=1,2$.  Define $\gamma: B \rightarrow \mathbb{R}$ by $\gamma=R(v_2,v_1,\bar{v}_1,\bar{v}_2)$.   Again by Corollary \ref{k7}, the $A_b$-invariant $2$-planes $\sigma_i(b)$ are $J_b$-invariant and by Lemma \ref{k6}, $\gamma=1$ on $B$.  

Corollary \ref{c11}, implies that $\lambda_1(b)<\lambda_2(b)$ and Lemma \ref{c3} implies 
 \begin{equation}\label{ineq}
 \lambda_1(b)<4<\lambda_2(b)
\end{equation} for each $b\in B$.

The goal of the following calculations is to show that the orthogonal distributions $\sigma_1$ and $\sigma_2$ are integrable and totally geodesic.  As $J$ is parallel, 
\begin{equation}\label{par}
\g(\nabla_X JY,Z)=\g(J \nabla_X Y,Z)=-\g(\nabla_X Y,JZ)
\end{equation} for all smooth vector fields $X,Y,Z$.

Use (\ref{par}) to conclude 
\begin{equation}\label{t1}
\g(\nabla_{v_2}v_2, \bar{v}_1)=-\g(\nabla_{v_2} \bar{v}_2 ,v_1).
\end{equation}

Use the differential Bianchi identity, $$0=(\nabla_{v_2}R)(v_1,\bar{v}_1,v_1,v_2)+(\nabla_{v_1}R)(\bar{v}_1,v_2,v_1,v_2)+(\nabla_{\bar{v}_1}R)(v_2,v_1,v_1,v_2)$$ to derive

\begin{equation}\label{b1}
(\lambda_1-1)\g(\nabla_{v_2}v_2,\bar{v}_1)+3\g(\nabla_{v_2}\bar{v}_2,v_1)=0.
\end{equation}

Use (\ref{ineq}), (\ref{t1}), and (\ref{b1}) to conclude

\begin{equation}\label{b5}
\g(\nabla_{v_2}v_2, \bar{v}_1)=\g(\nabla_{v_2} \bar{v}_2 ,v_1)=0.
\end{equation}

Set $w_1:=\bar{v}_1$ and $\bar{w}_1:=J w_1=-v_1$.  Repeating the above calculations with $w_1$ and $\bar{w}_1$ in place of $v_1$ and $\bar{v}_1$, respectively, yields the following analogue of (\ref{b5})

\begin{equation}
\g(\nabla_{v_2}v_2, \bar{w}_1)=\g(\nabla_{v_2} \bar{v}_2 ,w_1)=0,
\end{equation}
or equivalently,

\begin{equation}\label{b6}
\g(\nabla_{v_2} v_2,v_1)=\g(\nabla_{v_2} \bar{v}_2,\bar{v}_1)=0.
\end{equation}

Set $w_2:=\bar{v}_2$ and $\bar{w}_2:=J w_2=-v_2$.  Repeating the above calculations with $w_2$ and $\bar{w}_2$ in place of $v_2$ and $\bar{v}_2$, respectively, yields the following analogues of (\ref{b5}) and (\ref{b6})

\begin{equation}
\g(\nabla_{w_2}w_2, \bar{v}_1)=\g(\nabla_{w_2} \bar{w}_2 ,v_1)=0,
\end{equation}

and

\begin{equation}
\g(\nabla_{w_2}w_2, v_1)=\g(\nabla_{w_2} \bar{w}_2 ,\bar{v}_1)=0,
\end{equation}
or equivalently,

\begin{equation}\label{b11}
\g(\nabla_{\bar{v}_2} \bar{v}_2,\bar{v}_1)=\g(\nabla_{\bar{v}_2} v_2,v_1)=0,
\end{equation}
and 

\begin{equation}\label{b12}
\g(\nabla_{\bar{v}_2} \bar{v}_2,v_1)=\g(\nabla_{\bar{v}_2}v_2,\bar{v}_1)=0.
\end{equation}

The $2$-plane field $\sigma_2$ is integrable and totally geodesic by (\ref{b5}), (\ref{b6}), (\ref{b11}), and (\ref{b12}).

Switching the roles of the indices $1$ and $2$ in the differential Bianchi calculation above, yields the following analogue of (\ref{b1})

\begin{equation}\label{b90}
(\lambda_2-1)\g(\nabla_{v_1} v_1,\bar{v}_2)+3\g(\nabla_{v_1} \bar{v}_1,v_2)=0.
\end{equation}

Now, arguing as in the case of the $2$-plane field $\sigma_2$, the $2$-plane field $\sigma_1$ is also integrable and totally geodesic.  As the tangent $2$-plane fields $\sigma_1$ and $\sigma_2$ are orthogonal, integrable, and totally geodesic, $B$ is locally isometric to a Riemannian product by Lemma \ref{split}.  This contradicts the curvature assumption $\sec\geq 1$.
\end{proof}

\begin{theorem}\label{4}
A K\"ahlerian manifold with $\sec \geq 1$, real dimension $d=4$, and spherical rank at least $2$ is isometric to a globally symmetric $\mathbb{C}\mathbb{P}^{2}$ with holomorphic curvatures equal to $4$.
\end{theorem}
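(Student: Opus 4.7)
The plan is to verify that $M$ satisfies the Rakić duality principle and then invoke Theorem \ref{thm:C}. By Proposition \ref{k12}, at each $p \in M$ the skew-symmetric endomorphism $A_p$ has a single eigenvalue $\mu(p) > 0$. Rescaling the smooth section $p \mapsto A_p$ by $\mu(p)^{-1}$ (as in the proof of item (3) of Theorem \ref{thm:B}) I may assume $A_p^2 = -\Id$ everywhere, so $A$ becomes a smooth almost complex structure on $TM$ compatible with $\g$. For every $v \in S_pM$, the Jacobi operator $\mathcal{J}_v$ then has exactly two eigenspaces: the two-dimensional $1$-eigenspace $E_v = \spn\{v, A_pv\}^\perp$, and the one-dimensional $\lambda(v)$-eigenspace $\R A_p v$, where $\lambda(v) > 1$.

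To check Rakić duality, let $v,w \in S_pM$ be orthonormal and suppose $w$ is a $c$-eigenvector of $\mathcal{J}_v$. If $c = 1$ then $\sec(v,w) = 1$ by Lemma \ref{crit}, which immediately gives that $v$ is a $1$-eigenvector of $\mathcal{J}_w$. Otherwise $c = \lambda(v)$ and $w = \pm A_p v$, whence $A_p w = \mp v$, so $v$ lies in the $\lambda(w)$-eigenspace of $\mathcal{J}_w$; moreover
\[\lambda(w) = \sec(w, A_p w) = \sec(\pm A_p v, \mp v) = \sec(v, A_p v) = \lambda(v) = c.\]
Hence $M$ satisfies the Rakić duality principle, so Theorem \ref{thm:C} applies and $M$ is locally symmetric.

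Since $\sec \geq 1$, Myers' theorem implies $M$ is compact, and Synge's theorem (recalled at the start of Section \ref{sec:D1}) gives that $M$ is simply connected. A complete simply connected locally symmetric space is globally symmetric, and the simply connected compact four-dimensional symmetric spaces with $\sec \geq 1$ and $b_2 \neq 0$ (needed because $M$ is Kähler) are exhausted by the Fubini--Study $\mathbb{CP}^2$; $S^2 \times S^2$ is excluded by the curvature hypothesis $\sec \geq 1 > 0$. Hence $M$ is isometric to $\mathbb{CP}^2$ with some constant holomorphic curvature $\lambda_0$.

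To pin down $\lambda_0$, I observe that the proof of Corollary \ref{k13} goes through verbatim in dimension four once Proposition \ref{k12} substitutes for Lemma \ref{k9} (the relation $A_p^2 = -\Id$ makes $\spn\{v, A_pv\}$ automatically $A_p$-invariant). Thus holomorphic $2$-planes coincide with $A_p$-invariant $2$-planes $\spn\{v, A_p v\}$, and so have sectional curvature $\lambda(v)$. The argument of Proposition \ref{rakic} shows $\lambda$ is constant on $SM$; call this common value $\lambda_0$. On $\mathbb{CP}^2$ of constant holomorphic curvature $\lambda_0$, sectional curvatures fill $[\lambda_0/4, \lambda_0]$, and the spherical-rank hypothesis provides $2$-planes of curvature exactly $1$, forcing $\lambda_0/4 = 1$ and $\lambda_0 = 4$. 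The only genuine technical obstacle has already been surmounted in Proposition \ref{k12}; the remainder is a direct assembly of Rakić duality, Theorem \ref{thm:C}, and the classification of compact four-dimensional Hermitian symmetric spaces.
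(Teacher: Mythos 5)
Your proof is correct and follows the paper's own route: Proposition \ref{k12} gives that $A_p$ has a single eigenvalue, which yields the Rakić duality principle exactly as in the paper's argument (the $c=1$ case via Lemma \ref{crit}, the $c=\lambda(v)$ case via the $A_p$-invariance of $\spn\{v,w\}$), and Theorem \ref{thm:C} then finishes. The only difference is cosmetic and occurs at the very end: the paper's citation of Theorem \ref{thm:C} implicitly leans on the stronger Proposition \ref{rakic}, which already delivers the symmetric $\mathbb{C}\mathbb{P}^{2}$ with holomorphic curvature $4$, whereas you use only the ``locally symmetric'' conclusion and recover the model space and its normalization by classifying compact simply connected four-dimensional symmetric spaces and invoking the $[\lambda_0/4,\lambda_0]$ pinching together with the curvature-one planes guaranteed by the rank hypothesis --- a valid, slightly more self-contained finish.
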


\begin{proof}
It suffices to prove that $M$ satisfies the Raki\'c duality principle by Theorem \ref{thm:C}.  

Let $p \in M$ and let $v,w \in S_pM$ be a pair of orthonormal vectors.  The Jacobi operator $\mathcal{J}_v$ has two eigenspaces, namely the two-dimensional $1$-eigenspace $E_v$ and the one-dimensional $\lambda(v)$-eigenspace spanned by the vector $A_pv$.  Similarly, the Jacobi operator $\mathcal{J}_w$ has a two-dimensional $1$-eigenspace $E_w$ and a one-dimensional $\lambda(w)$-eigenspace spanned by $A_pw$.

If $w \in E_v$, then $v \in E_w$ by Lemma \ref{crit}.  If $w$ lies in the $\lambda(v)$-eigenspace of $\mathcal{J}_v$, then $w$ is a multiple of $A_pv$.  By Proposition \ref{k12} the $2$-plane $\sigma:=\spn\{v,w\}$ is $A_p$-invariant, whence $\lambda(w)=\sec(\sigma)=\lambda(v)$ and $v$ lies in the $\lambda(w)$-eigenspace of $\mathcal{J}_w$.
\end{proof}

Together, Theorems \ref{not4} and \ref{4} complete the proof of Theorem \ref{thm:D}.

\end{document}